\documentclass[preprint,EJP]{ejpecp}

\usepackage[T1]{fontenc}
\usepackage[utf8]{inputenc}

\SHORTTITLE{Interval-Activation Frog Model with Random Survival Parameters}

\TITLE{Extinction and Survival in an Interval-Activation Frog Model on
  \texorpdfstring{\(\mathbb{Z}\)}{Z} with Random Survival Parameters and
  Symmetric Random Walks}

\AUTHORS{%
  Gustavo~Oshiro~de~Carvalho\footnote{Universidade Estadual de Campinas,
    Brazil. \EMAIL{godc@ime.unicamp.br}}
  \and
  Fábio~Prates~Machado\footnote{Universidade de São Paulo,
    Brazil. \EMAIL{fmachado@ime.usp.br}}
  \and
  José~Hermenegildo~Ramírez-González\footnote{Universidade de São Paulo,
    Brazil. Corresponding author.
    \EMAIL{hermenegildo.ramirez@usp.br}}}

\KEYWORDS{Frog model; geometric lifetime; random survival parameter;
  regular variation; stable domain of attraction}
\AMSSUBJ{60K35}
\AMSSUBJSECONDARY{60F05; 60G50; 60G52}

\ABSTRACT{We study an interval-activation frog model on \(\mathbb Z\). At each \(x\in\mathbb Z\), let \(\eta_x\in\mathbb N_0:=\{0,1,2,\ldots\}\) be the number of frogs initially placed at \(x\), where \((\eta_x)_{x\in\mathbb Z}\) is i.i.d.\ with \(0<\E[\eta_0]<\infty\); frogs at the origin are initially active and all others are sleeping. Each frog \((x,i)\), \(1\le i\le\eta_x\), is assigned a survival parameter \(\pi_{x,i}\in(0,1)\), a lifetime \(L_{x,i}\in\mathbb N_0\) and a random walk \(S_m^{x,i}=x+\sum_{j=1}^m\xi_j^{x,i}\), where the increments \(\xi_j^{x,i}\) are i.i.d.\ copies of a symmetric, integer-valued random variable \(\xi_1\). The survival parameters \((\pi_{x,i})_{x\in\mathbb Z,\,i\ge1}\) are i.i.d.\ with common law \(\nu\) on \((0,1)\) and the lifetimes \((L_{x,i})_{x\in\mathbb{Z},i\ge 1}\) are i.i.d. satisfying \(\PP(L_{x,i}\ge k\mid\pi_{x,i}=p)=p^k\), \(k\in\mathbb N_0\), representing the case in which each active frog survives before each jump with probability \(\pi_{x,i}\).
While active and alive, each frog follows its random walk, and every jump activates all sleeping frogs at the integer sites between its endpoints.
 Let \(D^\to\) have the same law as \(D_{x,i}^\to:=\max_{0\le m\le L_{x,i}}(S_m^{x,i}-x)\). We obtain survival and extinction criteria for the interval-activation frog model by studying the tail behavior of \(D^\to\). If \(\PP(|\xi_1|\ge n)\sim n^{-\alpha}L_\xi(n)\) as \(n\to\infty\) with $L_\xi$ slowly varying, then the model survives with positive probability for $0<\alpha<1$, whereas for $\alpha=1$ both survival and almost sure extinction are possible, depending on the remaining parameters. For \(1<\alpha<2\), assume \(\PP(|\xi_1|>n)\sim c_\xi n^{-\alpha}\) as \(n\to\infty\); in the finite-variance case assume \(\E[\xi_1]=0\) and \(\Var(\xi_1)=\sigma^2\in(0,\infty)\), and set \(r=\alpha\) and \(r=2\), respectively. If \(\nu\) has density \(f_\pi(u)\sim(1-u)^{\beta-1}\ell((1-u)^{-1})\) as \(u\uparrow1\) with $\ell$ slowly varying, then, for \(0<\beta<1\), \(n\PP(D^\to\ge n)\sim C_\beta n^{1-r\beta}\ell(n^r)\), with explicit \(C_\beta\). Consequently, we obtain the sharp off-critical threshold of \(\beta_c=1/r\): survival holds with positive probability for \(\beta<1/r\), extinction holds almost surely for \(\beta>1/r\), and explicit sufficient conditions on the critical line \(\beta=1/r\) leave a factor-four gap.}

\newcommand{\PP}{\mathbb{P}}
\newcommand{\E}{\mathbb{E}}
\newcommand{\N}{\mathbb{N}}
\newcommand{\R}{\mathbb{R}}
\newcommand{\Var}{\operatorname{Var}}
\newcommand{\Disc}{\operatorname{Disc}}

\hypersetup{
  pdftitle={Extinction and Survival in an Interval-Activation Frog Model on Z with Random Survival Parameters and Symmetric Random Walks},
  pdfauthor={José Hermenegildo Ramírez-González, Gustavo Oshiro de Carvalho, Fábio Prates Machado},
  pdfsubject={Survival and extinction in an interval-activation frog model},
  pdfkeywords={Frog model, interval activation, random survival parameter, geometric lifetime, regular variation, stable domain of attraction}
}

\begin{document}

\section{Introduction and setting}\label{sec:intro}

In the frog model, frogs are initially placed at the vertices of a graph. The
frogs at a distinguished root are active and all others are sleeping. Active
frogs perform independent random walks, and a sleeping frog becomes active when
an active trajectory reaches its site. The basic question is whether at least
one frog remains active at every time or eventually no active frogs remain. Since the foundational work
\cite{Alves2002}, the model has become a standard framework for studying
survival and extinction in spatially distributed systems.

The behavior of the frog model depends strongly on the geometry, the motion,
and the lifetime mechanism. On trees and higher-dimensional lattices, genuine
phase transitions arise and the associated critical parameters can often be
estimated or bounded; see, for example,
\cite{Fontes2004,Lebensztayn2005,Lebensztayn2019,Lebensztayn2020,Gallo2023}.
On \(\mathbb Z^d\), \(d\ge2\), a phase transition with respect to the
survival parameter was established in \cite{Alves2002}. In contrast, on
$\mathbb Z$, geometric lifetimes with a fixed survival parameter lead to
almost-sure extinction in the standard model \cite{Alves2002}. This rigidity
has motivated variants with drifted walks or spatially heterogeneous
parameters \cite{Bertacchi2014,Gantert2009}.

Assigning an independent random survival parameter to each frog provides such a mechanism. In
\cite{CarvalhoMachado2025}, each frog is assigned an independent
$\pi\in(0,1)$ and, conditional on $\pi=p$, has geometric lifetime tail $p^k$.
For Beta laws, the extinction--survival threshold occurs at $\beta=1/2$.
This result was extended in \cite{JGF} to survival-parameter densities satisfying
\begin{equation}\label{eq:edge-behavior-pi}
f_\pi(u)\sim (1-u)^{\beta-1}\ell\!\left(\frac{1}{1-u}\right),
\qquad u\uparrow1,
\end{equation}
where $\beta>0$ and $\ell$ is slowly varying at infinity. Throughout, slowly
varying functions are assumed measurable and eventually positive.
Recall that $\ell:(0,\infty)\to(0,\infty)$ is slowly varying if
\[
\lim_{t\to\infty}\frac{\ell(ct)}{\ell(t)}=1
\qquad\text{for every }c>0.
\]
We use the standard theory of regular variation, including Potter bounds, from
\cite{BinghamGoldieTeugels1987}, and write $f(t)\sim g(t)$ when
$f(t)/g(t)\to1$. For the simple symmetric nearest-neighbour walk, the result in
\cite{JGF} gives extinction for $\beta>1/2$, survival with positive probability
for $\beta<1/2$, and a critical regime governed by $\ell$.

A complementary extension retains nearest-neighbour motion and changes the
lifetime law. In \cite{JGF2}, conditional on $\pi=p$, the lifetime satisfies
\[
\PP(L\ge k\mid\pi=p)=p^{k^\gamma},
\qquad k\in\{0,1,2,\ldots\},
\]
with $\gamma>0$, and the sharp off-critical threshold is
$\beta_c=1/(2\gamma)$. The same random discrete Weibull mechanism was studied
for biased nearest-neighbour walks in \cite{RamirezMachadoBiasedWeibull}; the
linear first-passage scale produced by the drift changes the threshold to
$\beta_c=1/\gamma$. These results show that the critical exponent reflects both
the lifetime tail and the spatial scale of the underlying motion.

The present paper keeps geometric lifetimes and broadens the class of symmetric
motions. Let \((\xi_j)_{j\ge1}\) be an i.i.d.\ sequence of symmetric,
integer-valued increments and set \(S_0=0\) and
\(S_m=\sum_{j=1}^m\xi_j\). We consider three motion regimes. For
\(0<\alpha\le1\), we assume
\(\PP(|\xi_1|\ge n)\sim n^{-\alpha}L_\xi(n)\), where \(L_\xi\) is slowly varying. For \(1<\alpha<2\), we assume
\(\PP(|\xi_1|>n)\sim c_\xi n^{-\alpha}\) for some \(c_\xi>0\) as \(n\to\infty\). We also treat the finite-variance case, where \(\E[\xi_1]=0\) and
\(\Var(\xi_1)=\sigma^2\in(0,\infty)\). In particular, this regime
includes the case where
\(\PP(|\xi_1|\ge n)\sim n^{-\alpha}L_\xi(n)\), with
\(\alpha>2\) and \(L_\xi\) slowly varying.

Because genuine long jumps may skip intermediate sites, we use an
\emph{interval-activation rule}: a displacement from \(x\) to \(y\) activates
every sleeping frog at the integer sites in the interval with endpoints \(x\)
and \(y\). This convention makes the region explored by a frog a complete
integer interval and permits the firework comparisons stated in
Theorem~\ref{thm:frog-criterion}. It is part of the model definition and is not
a consequence of the endpoint-only long-jump frog model. The two activation
rules coincide for nearest-neighbour walks.

We now describe the argument and the results precisely. Let \(\pi\) have law
\(\nu\). Conditional on \(\pi=p\), let \(L\) satisfy
\(\PP(L\ge k\mid\pi=p)=p^k\), and let the random walk \((S_n)_{n\ge0}\),
started from zero, be independent of \(L\). Define
\[
D^\rightarrow:=\max_{0\le m\le L}S_m,
\qquad
D^*:=\max_{0\le m\le L}|S_m|,
\qquad
\tau_n^+:=\inf\{m\ge0:S_m\ge n\}.
\]
Conditional on \(\pi=p\), the geometric lifetime gives the fundamental identity
\[
\PP(D^\rightarrow\ge n\mid\pi=p)=\E[p^{\tau_n^+}],
\]
where \(p^\infty:=0\). The firework couplings in
Theorem~\ref{thm:frog-criterion} reduce survival and extinction to the
asymptotic behavior of \(n\PP(D^\rightarrow\ge n)\) and
\(n\PP(D^*\ge n)\). For \(0<\alpha<1\), a one-step lower bound yields
\(n\PP(D^\rightarrow\ge n)\to\infty\); the same holds at \(\alpha=1\)
under \(L_\xi(n)\to\infty\), giving survival with positive probability
(Proposition~\ref{prop:survival-alpha-le-one}). When $\alpha=1$, additional criteria for both survival and extinction are given in Proposition~\ref{prop_regularly_alpha1}, based on the single big jump principle for subexponential random variables.

For \(1<\alpha<2\), the rescaled walk converges to a symmetric strictly
\(\alpha\)-stable L\'evy process, while in the finite-variance regime it
converges to a Brownian motion. We prove convergence of the corresponding
first-passage times at scale \(n^r\), where \(r=\alpha\) in the stable regime
and \(r=2\) in the finite-variance regime. Combining this convergence with
uniform small-time estimates and the edge asymptotic
\eqref{eq:edge-behavior-pi}, Proposition~\ref{prop:An-order} proves, for
\(0<\beta<1\),
\[
n\PP(D^\rightarrow\ge n)
\sim C_\beta n^{1-r\beta}\ell(n^r),
\]
with an explicit constant \(C_\beta\). Theorem~\ref{thm:survival-extinction}
therefore gives the sharp off-critical threshold \(\beta_c=1/r\): survival
has positive probability for \(\beta<1/r\), whereas extinction occurs almost
surely for \(\beta>1/r\). On the critical line \(\beta=1/r\), the exact
asymptotic is \(\kappa\ell(n^r)\), with an explicit motion-dependent constant
\(\kappa\); the firework comparisons yield sufficient survival and extinction
conditions whose constants differ by a factor of four.

The paper is organized as follows. Section~\ref{modeln} defines the model and
the displacements of a single frog. Section~\ref{mainr} states the
comparison theorem and the phase-transition results, whose proofs are given in
Section~\ref{proofmain}. Section~\ref{auxsec} contains the functional-limit and
first-passage estimates used in those proofs.

\section{Model and notation on \texorpdfstring{\ensuremath{\mathbb{Z}}}{Z}}\label{modeln}

Let $\mathbb{N}=\{1,2,3,\dots\}$ and $\mathbb{N}_0=\mathbb{N}\cup\{0\}$. We write $\mathbf 1_A$ for the indicator of an event $A$, $U\stackrel d=V$ for equality in distribution, and $X_n\Rightarrow X$ for convergence in distribution. For each $x\in\mathbb{Z}$, let $\eta_x\in\mathbb{N}_0$ denote the initial number of frogs at $x$, and index frogs by pairs $(x,i)$ with $1\le i\le \eta_x$.

The families
\[
\{\eta_x\}_{x\in\mathbb{Z}},\qquad
\{\pi_{x,i}\}_{x\in\mathbb{Z},\,i\in\mathbb{N}},\qquad
\Big\{(S_n^{x,i})_{n\in\mathbb{N}_0}\Big\}_{x\in\mathbb{Z},\,i\in\mathbb{N}}
\]
are mutually independent, where $\{\eta_x\}_{x\in\mathbb{Z}}$ is an i.i.d.\ sequence, $\{\pi_{x,i}\}_{x\in\mathbb{Z},\,i\in\mathbb{N}}$ is an i.i.d.\ family with common law $\nu$ on $(0,1)$, and
\[
\Big\{(S_n^{x,i})_{n\in\mathbb{N}_0}\Big\}_{x\in\mathbb{Z},\,i\in\mathbb{N}}
\]
is a family of independent random walks on $\mathbb{Z}$ such that $S_0^{x,i}=x$ for every $(x,i)$ and all walks have common one-step distribution $\mu$. Let $S=(S_n)_{n\in\mathbb N_0}$ be a random walk started from $S_0=0$ with one-step distribution $\mu$; then $(S_n^{x,i}-x)_{n\in\mathbb N_0}$ is an independent copy of $S$ for every $(x,i)$. We write $\pi$ for a random variable with law $\nu$ and $\eta$ for a random variable with the same law as $\eta_0$. Thus, $\eta_x$ represents the initial number of frogs at site $x$, $\pi_{x,i}$ is the survival parameter assigned to frog $(x,i)$, and $(S_n^{x,i})_{n\in\mathbb{N}_0}$ is the trajectory followed by that frog.

For the phase-transition results in Subsection~\ref{sec:alpha-ge-one}, we assume that the law $\nu$ admits a density $f_\pi$ satisfying \eqref{eq:edge-behavior-pi}. The general comparison theorem and Proposition~\ref{prop:survival-alpha-le-one} are stated under their own weaker assumptions and do not require this density condition.

For each $(x,i)$, let $L_{x,i}\in\mathbb{N}_0$ denote the lifetime of frog $(x,i)$. Given $\pi_{x,i}=p$, the random variable $L_{x,i}$ is geometric with tail
\[
\PP(L_{x,i}\ge k\mid \pi_{x,i}=p)=p^k,
\qquad k\in\mathbb{N}_0.
\]
Conditional on $\{\pi_{x,i}\}_{x\in\mathbb{Z},\,i\in\mathbb{N}}$, the family $\{L_{x,i}\}_{x\in\mathbb{Z},\,i\in\mathbb{N}}$ is independent, and is independent of $\{\eta_x\}_{x\in\mathbb{Z}}$ and of
\[
\Big\{(S_n^{x,i})_{n\in\mathbb{N}_0}\Big\}_{x\in\mathbb{Z},\,i\in\mathbb{N}}.
\]
Hence, conditional on $\pi_{x,i}=p$, frog $(x,i)$ survives for at least $k$ steps with probability $p^k$. Equivalently, $L_{x,i}$ is the number of steps executed by the frog before its death event.

Time is discrete and updates are synchronous. At time $0$, all frogs located at the origin are active, whereas all frogs at the remaining sites are inactive. During the transition from time $t$ to time $t+1$, each frog that is active at time $t$ first dies with probability $1-\pi_{x,i}$; if it survives, it performs the next step of its associated random walk. A successful displacement from a site $x$ to a site $y$ activates, at time $t+1$, every sleeping frog initially present at an integer site between $x$ and $y$ (see Figure~\ref{fig:interval-activation}). Frogs activated at time $t+1$ make their first death-or-move attempt during the transition from $t+1$ to $t+2$. Once activated, frogs evolve independently according to the same mechanism. In particular, once no active frogs remain, no further frog can be activated.

\begin{figure}[t]
  \centering
  \includegraphics[width=.94\textwidth]{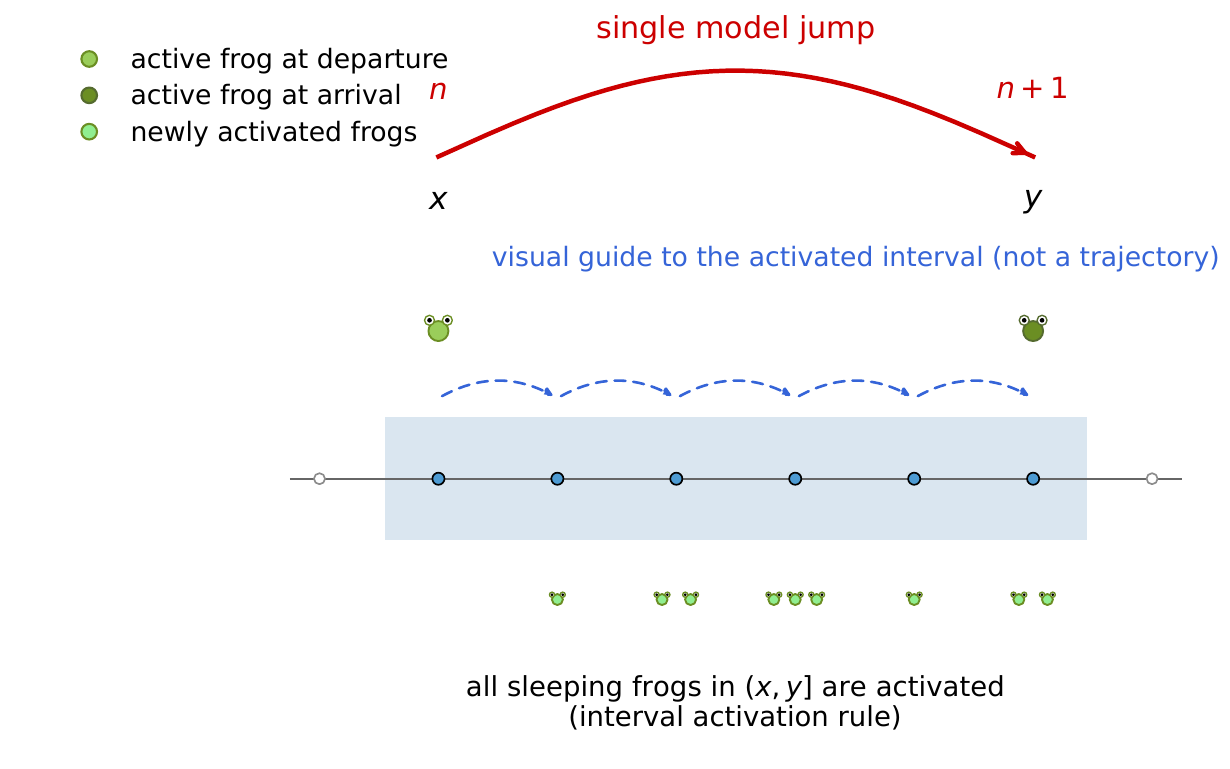}
  \caption{Schematic of the interval-activation convention. A displacement
  from \(x\) to \(y\) activates all sleeping frogs at the integer sites in the
  interval with endpoints \(x\) and \(y\). The dashed nearest-neighbour path is
  only a visual representation of the intervening interval and is not part of
  the stochastic dynamics.}
  \label{fig:interval-activation}
\end{figure}

We denote the resulting system by $\mathrm{FM}(\mathbb{Z},\pi,\eta,S)$,
where $\pi$ denotes a survival parameter with law $\nu$, $\eta$ has the same law as the initial number $\eta_0$ of frogs at a site, and $S$ denotes the walk started from zero. The process depends only on these three laws. A realization \emph{survives}
if at every integer time there is at least one active frog; otherwise it becomes
extinct. Because every jump has finite length and every site contains finitely
many frogs almost surely, only finitely many sites can be activated by any fixed
time. Indeed, this follows inductively over the discrete time steps: finitely
many active frogs make finitely many finite jumps, each of which intersects only
finitely many sites.

\subsection*{Heuristic motivation for the interval-activation rule}
The interval-activation rule is a modelling convention in which each
displacement represents an interaction covering the whole integer segment
between its endpoints. No auxiliary microscopic nearest-neighbour trajectory
is assumed. The rule is distinct from the endpoint-only frog model and makes
the region explored by one frog a complete interval, enabling comparison with
right-going and bidirectional firework processes.

We now specify the increments of the walk $S$, which starts from zero. Let
\((\xi_k)_{k\ge1}\) be an i.i.d.\ sequence of symmetric, integer-valued random
variables such that
\[
S_n=\sum_{k=1}^n \xi_k,\qquad n\ge1.
\]
Thus
\[
\xi_1\overset{d}{=}-\xi_1.
\]
We henceforth take $\mu$ to be the law of $\xi_1$; equivalently, for every
$(x,i)$, the shifted process $(S_n^{x,i}-x)_{n\ge0}$ is an independent copy
of $S$. No aperiodicity or span assumption is imposed; in particular, the
simple symmetric nearest-neighbour walk is included in the finite-variance
regime.

We consider two regimes for the law of $\xi_1$.

\medskip
\noindent
\emph{Heavy-jump regime.}
\begin{itemize}
    \item If $\alpha\in(1,2)$, we assume that there exists a constant $c_\xi>0$ such that
    \begin{equation}\label{eq:tail-xi-lattice}
    \PP(|\xi_1|>n)\sim c_\xi n^{-\alpha},
    \qquad n\to\infty.
    \end{equation}

    \item If $\alpha\in(0,1]$, we assume that there exists a slowly varying function $L_\xi$ such that
    \begin{equation}\label{eq:tail-xi-lattice0}
    \PP(|\xi_1|\ge n)\sim n^{-\alpha}L_\xi(n),
    \qquad n\to\infty.
    \end{equation}
\end{itemize}

\medskip
\noindent
\emph{Finite-variance regime.}
We assume that
\[
\E[\xi_1]=0,
\qquad
\E[\xi_1^2]=\sigma^2\in(0,\infty).
\]

For each frog $(x,i)$, define
\[
D_{x,i}^\rightarrow
:= \max_{0\le n\le L_{x,i}}(S_n^{x,i}-x),\qquad
D_{x,i}^\leftarrow
:= \max_{0\le n\le L_{x,i}}(x-S_n^{x,i}),
\]
and
\[
D_{x,i}^*:=\max\{D_{x,i}^\rightarrow,D_{x,i}^\leftarrow\}.
\]
Under the interval-activation rule, frog $(x,i)$ activates exactly the sites in
\[
[x-D_{x,i}^\leftarrow,\,x+D_{x,i}^\rightarrow]\cap\mathbb{Z}.
\]
Indeed, the intervals traversed by consecutive jumps overlap at their common
endpoint, so their union is the complete integer interval between the minimum
and maximum positions attained by the frog.

We write \(D^\rightarrow\) and \(D^*\) for random variables having the same distributions as \(D_{x,i}^\rightarrow\) and \(D_{x,i}^*\), respectively.

Since activation in \(\mathrm{FM}(\mathbb{Z},\pi,\eta,S)\) is determined by the
range explored by the underlying walk, first-passage times arise naturally in
the analysis. In order to study the probability that a frog reaches distant
sites before dying, we introduce the following notation. For a real-valued
process \(Z=(Z_t)_{t\ge0}\) and \(x>0\), we define
\[
  \tau_x^+(Z):=\inf\{t\ge0:\ Z_t\ge x\}.
\]

In particular, for the underlying walk \(S=(S_m)_{m\in\mathbb N_0}\) and
\(n\in\mathbb N_0\), we write
\[
  \tau_n^+:=\inf\{m\ge0:\ S_m\ge n\},
\]
the first-passage time of \(S\) above level \(n\).

\section{Main results}\label{mainr}

\subsection*{Survival and extinction criterion}

In this subsection we work under the interval-activation convention defined in
Section~\ref{modeln}, for the frog model
$\mathrm{FM}(\mathbb{Z},\pi,\eta,S)$ just defined.

For each $x\in\mathbb{Z}$, there are $\eta_x$ frogs initially located at $x$,
indexed by $\{(x,i):1\le i\le \eta_x\}$. For each such frog, let
$D_{x,i}^{\rightarrow}$ and $D_{x,i}^{*}$ denote, respectively, its maximal
rightward displacement and its maximal displacement in absolute value during
its lifetime. We then define the random radii
\[
I_x^+ := \max_{1\le i\le \eta_x} D_{x,i}^{\rightarrow},
\qquad
I_x^* := \max_{1\le i\le \eta_x} D_{x,i}^{*},
\]
with the convention that the maximum over the empty set is $0$. Write
\[
I^+:=(I_x^+)_{x\in\mathbb Z},
\qquad
I^*:=(I_x^*)_{x\in\mathbb Z}.
\]
Since $\{\eta_x\}_{x\in\mathbb{Z}}$ is an i.i.d.\ family, and
$\{(D_{x,i}^{\rightarrow},D_{x,i}^{*})\}_{x\in\mathbb{Z},\,i\in\mathbb{N}}$
is an i.i.d.\ family independent of $\{\eta_x\}_{x\in\mathbb{Z}}$, it follows
that $\{(I_x^+,I_x^*,\eta_x)\}_{x\in\mathbb{Z}}$ is an i.i.d.\ family.

We now consider the right-going firework process $\mathrm{FW}(I^+)$ and the
bidirectional firework process $\mathrm{BFW}(I^*)$ on $\mathbb{Z}$; see
\cite{JuniorMachadoZuluaga2011,RoySaha2024} and
\cite[Lemma~2.1]{CarvalhoMachado2025}. In both processes the origin is initially
informed and every other site is initially uninformed. In $\mathrm{FW}(I^+)$,
whenever a site $x$ becomes informed, it informs all sites $y$ such that
$x<y\le x+I_x^+$. In $\mathrm{BFW}(I^*)$, whenever a site $x$ becomes informed,
it informs all sites $y$ such that
$x-I_x^*\le y\le x+I_x^*$.

The interval-activation rule associates with each frog the full integer interval determined by its leftmost and rightmost positions. This permits comparison with firework processes, in which the radius attached to each site determines which sites are informed.

\begin{theorem}
\label{thm:frog-criterion}
Assume $0<\E[\eta_0]<\infty$. Let $(D^\rightarrow,D^\leftarrow,D^*)$ have the same distribution as
$(D_{x,i}^{\rightarrow},D_{x,i}^\leftarrow,D_{x,i}^{*})$. Then the following assertions hold.

\begin{itemize}
  \item[(i)] If
  \[
    \limsup_{n\to\infty} n\,\PP(D^*\ge n)
    \;<\;\frac{1}{2\E[\eta_0]},
  \]
  then $\mathrm{FM}(\mathbb{Z},\pi,\eta,S)$ dies out almost surely.

  \item[(ii)] If
  \[
    \liminf_{n\to\infty} n\,\PP(D^\rightarrow\ge n)
    \;>\;\frac{1}{\E[\eta_0]},
  \]
  then $\mathrm{FM}(\mathbb{Z},\pi,\eta,S)$ survives with positive probability.
\end{itemize}
\end{theorem}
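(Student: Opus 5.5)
We plan to prove the two assertions by coupling the frog model with the firework processes. The comparison theorems for fireworks (the criteria of \cite{JuniorMachadoZuluaga2011,RoySaha2024} and \cite[Lemma~2.1]{CarvalhoMachado2025}) then decide survival or extinction through the tail of the i.i.d.\ radii \(I_x^+\) and \(I_x^*\). The first step is the structural observation made in Section~\ref{modeln}: under the interval-activation rule, frog \((x,i)\), once activated, activates exactly the sites in \([x-D_{x,i}^\leftarrow,x+D_{x,i}^\rightarrow]\cap\mathbb Z\). These displacements are functions of the frog's own walk and lifetime only, and so do not depend on the activation time. Hence the set of sites eventually activated in \(\mathrm{FM}(\mathbb Z,\pi,\eta,S)\) is the closure of \(\{0\}\) under the rule ``an activated site \(x\) with \(\eta_x\ge1\) activates \([x-\max_i D^\leftarrow_{x,i},\,x+\max_i D^\rightarrow_{x,i}]\)''. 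Survival with positive probability is then equivalent to this set being infinite with positive probability. Indeed, if it is infinite, at every time some frog is still active: infinitely many frogs are activated, and by the finiteness remark in Section~\ref{modeln} only finitely many are activated by any fixed time, so activation never stops. Conversely, if it is finite, all frogs eventually die since each \(L_{x,i}<\infty\) a.s.

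For (ii), the activated set contains the set informed by \(\mathrm{FW}(I^+)\), since we keep only the rightward parts of the intervals. It therefore suffices that \(\mathrm{FW}(I^+)\) survives with positive probability. The i.i.d.\ radii satisfy
\[
\PP(I_0^+\ge n)=1-\E\bigl[(1-\PP(D^\rightarrow\ge n))^{\eta_0}\bigr].
\]
Using \(1-(1-q)^k\ge kq-\binom k2 q^2\), or more robustly \(1-(1-q)^k\ge kq(1-q)^{k-1}\) together with dominated convergence (\(\eta_0\) is integrable and \(q_n\to0\)), we get \(n\PP(I_0^+\ge n)\sim \E[\eta_0]\,n\PP(D^\rightarrow\ge n)\) along the relevant subsequences. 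Hence \(\liminf_n n\PP(I_0^+\ge n)>1\). The standard firework criterion now applies: survival holds if \(\prod_n\PP(I_0^+<n)=0\) fails summably. Concretely, \(\mathrm{FW}\) survives iff \(\prod_{n\ge1}\PP(I_0^+<n)>0\) fails and the survival probability equals \(\prod_{k\ge1}(1-\prod_{j\ge k}\ldots)\). I would use the version from \cite{JuniorMachadoZuluaga2011}: if \(\PP(I\ge n)\ge (1+\varepsilon)/n\) eventually, then \(\sum_n\prod_{j=1}^n\PP(I<j)<\infty\), which gives positive survival probability.

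For (i), the activated set is contained in the set informed by \(\mathrm{BFW}(I^*)\), because \([x-D^\leftarrow,x+D^\rightarrow]\subset[x-D^*,x+D^*]\) and the radius at \(x\) is the maximum over the frogs there. For the upper bound \(1-(1-q)^k\le kq\) gives \(\limsup n\PP(I_0^*\ge n)\le \E[\eta_0]\limsup n\PP(D^*\ge n)<1/2\). The bidirectional firework criterion (\cite{CarvalhoMachado2025}, Lemma~2.1, or \cite{RoySaha2024}) states that \(\mathrm{BFW}(I^*)\) dies out a.s.\ when \(\limsup n\PP(I^*\ge n)<1/2\). This follows from a Borel--Cantelli argument: the events ``no site in \([-m,m]\) reaches beyond \(\pm(m+1)\)'' occur for infinitely many \(m\) a.s., because \(\prod_{|x|\le m}\PP(I_x^*<m+1-|x|)\) is of order \(\prod_j(1-c/j)^2\), which decays like \(m^{-2c}\) with \(2c<1\), making its sum diverge, together with a renewal/independence argument.

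The main obstacle I expect is purely the transfer step in (ii): matching the hypothesis exactly to the threshold in the cited firework criterion, and handling the \(\liminf\) rather than the limit when passing from \(D^\rightarrow\) to \(I^+\) with unbounded \(\eta_0\). Here the truncation \(\eta_0\wedge K\) with \(K\to\infty\) should suffice, since \(\E[\eta_0\wedge K]\uparrow\E[\eta_0]\) and the strict inequality leaves room.
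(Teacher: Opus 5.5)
Your proposal is correct and follows the paper's proof. Survival is identified with activation of infinitely many sites, the activated set is sandwiched between the sites informed by $\mathrm{FW}(I^+)$ and by $\mathrm{BFW}(I^*)$, and the firework criteria are then applied. The only difference is that the paper cites \cite[Lemma~2.3]{CarvalhoMachado2025}, which already works with $N_x=\eta_x$ and single-frog radii. You instead do the transfer from $D$ to $I$ by hand, via $kq(1-q)^{k-1}\le 1-(1-q)^k\le kq$ and dominated convergence. That is fine, and no truncation of $\eta_0$ is needed.

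Two small fixes. First, the bidirectional extinction criterion as you state it needs the nontriviality condition $\PP(I_0^*=0)>0$. Without it (for instance, if every radius were at least $1$), $\mathrm{BFW}$ covers $\mathbb Z$, and the products $\prod_j\PP(I^*<j)^2$ in your Borel--Cantelli/renewal sketch vanish. Here the condition holds, because $\PP(I_0^*=0)=\E[q^{\eta_0}]$ with $q=\PP(D^*=0)\ge\PP(L=0)=\E[1-\pi]>0$ and $\eta_0<\infty$ a.s.; this is exactly the check the paper makes before invoking the lemma. Second, drop the garbled sentence about $\prod_n\PP(I_0^+<n)>0$. The criterion you actually use, $\sum_n\prod_{j\le n}\PP(I<j)<\infty$, is the right one.
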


By symmetry, \(\{D_{x,i}^*\geq n\}=\{D_{x,i}^\rightarrow\geq n\}\cup\{D_{x,i}^\leftarrow \geq n\}\) and the random variables \(D_{x,i}^\rightarrow\) and \(D_{x,i}^\leftarrow\) are identically distributed. Therefore,
\begin{equation}\label{compara_d}
 \PP(D^\rightarrow\ge n)=\PP(D^\leftarrow\ge n)\ge\frac{1}{2}\PP(D^*\ge n).
\end{equation}
Thus, it suffices to study the asymptotic behavior of
\(\PP(D^{\rightarrow}\ge n)\).

Let $\pi$ have law $\nu$, $S$ be an independent copy of the walk started
from zero, and let $L$ be independent of $S$.
Conditionally on $\pi=p$, the frog survives before each attempted step
with probability $p$. Hence the total number of executed steps before death
has a geometric distribution on $\{0,1,2,\dots\}$ with tail
\[
\PP(L\ge m\mid \pi=p)=p^{\,m},\qquad m\in\mathbb{N}_0.
\]
Recall the upward hitting time
\[
\tau_n^+=\inf\{m\ge0:\ S_m\ge n\}.
\]
Since $D^\rightarrow=\max_{0\le m\le L} S_m$, we have the identity
\[
\{D^\rightarrow\ge n\}=\{\tau_n^+\le L\}.
\]
Using the conditional independence of $L$ and $(S_m)_{m\ge0}$ given $\pi$, and then conditioning on $\tau_n^+$, it follows that
\begin{equation}\label{asymtau}
\PP(D^\rightarrow\ge n\mid \pi=p)
=\E\!\left[\PP(L\ge \tau_n^+\mid \pi=p,\tau_n^+)\right]
=\E\!\left[p^{\tau_n^+}\right],
\end{equation}
where we use the convention $p^\infty=0$ for $0\le p<1$.

In view of Theorem~\ref{thm:frog-criterion} and \eqref{asymtau}, the problem
reduces to studying $\E[p^{\tau_n^+}]$, and hence
$n\PP(D^{\rightarrow}\ge n)$, in order to derive survival and extinction
criteria for $\mathrm{FM}(\mathbb{Z},\pi,\eta,S)$.

\subsection{Heavy-jump regime: \texorpdfstring{$0<\alpha\le1$}{0 < alpha <= 1}}\label{sec:heavy}
\begin{proposition}
\label{prop:survival-alpha-le-one}
Assume that the one-step increment $\xi_1$ is symmetric. If
\[
\liminf_{n\to \infty} n\PP(|\xi_1|\ge n)=\infty,
\]
then the interval-activation frog model survives with positive
probability under the assumptions that
\(\PP(\pi>0)>0\) and \(0<\E[\eta_0]<\infty\).
\end{proposition}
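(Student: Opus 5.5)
We reduce the statement to Theorem~\ref{thm:frog-criterion}(ii). It suffices to show that
\[
\liminf_{n\to\infty} n\,\PP(D^\rightarrow\ge n)=\infty ,
\]
which is strictly larger than \(1/\E[\eta_0]\) because \(0<\E[\eta_0]<\infty\). Since \(\pi\in(0,1)\) under \(\nu\), the hypothesis \(\PP(\pi>0)>0\) is automatic, so we may fix some \(p_0\in(0,1)\) with \(\PP(\pi\ge p_0)>0\).

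The key step is a one-step lower bound. On the event \(\{L\ge1\}\cap\{\xi_1\ge n\}\) we have \(D^\rightarrow\ge S_1=\xi_1\ge n\). Conditionally on \(\pi=p\), \(L\) is independent of the walk and \(\PP(L\ge 1\mid\pi=p)=p\). Alternatively, \eqref{asymtau} gives \(\E[p^{\tau_n^+}]\ge p\,\PP(\tau_n^+=1)= p\,\PP(\xi_1\ge n)\). Integrating over \(\nu\) yields
\[
\PP(D^\rightarrow\ge n)\;\ge\;\E[\pi]\,\PP(\xi_1\ge n).
\]
By symmetry of \(\xi_1\), \(\PP(\xi_1\ge n)=\tfrac12\PP(|\xi_1|\ge n)\) for \(n\ge1\): the events \(\{\xi_1\ge n\}\) and \(\{\xi_1\le -n\}\) are disjoint and equiprobable.

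Hence
\[
n\,\PP(D^\rightarrow\ge n)\ge \tfrac12\E[\pi]\, n\,\PP(|\xi_1|\ge n)\longrightarrow\infty .
\]
This uses \(\E[\pi]\ge p_0\PP(\pi\ge p_0)>0\) together with the liminf hypothesis. Theorem~\ref{thm:frog-criterion}(ii) then gives survival with positive probability.

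There is no real obstacle. The only points needing care are the following:
\begin{itemize}
\item Justify \(\{\tau_n^+\le L\}\supseteq\{L\ge1,\ \xi_1\ge n\}\) under the convention that \(L\) counts executed steps. This follows directly from the definition \(D^\rightarrow=\max_{0\le m\le L}S_m\).
\item Note that the conditional independence of \(L\) and \(S\) given \(\pi\) allows the factorization.
\end{itemize}
All substantive work is contained in the already-established comparison with the right-going firework process.
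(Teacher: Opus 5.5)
Your proposal is correct and matches the paper's proof step for step. It uses the same one-step bound $\PP(D^\rightarrow\ge n)\ge\E[\pi]\,\PP(\xi_1\ge n)$ and symmetry to get $\PP(\xi_1\ge n)=\tfrac12\PP(|\xi_1|\ge n)$, then concludes with Theorem~\ref{thm:frog-criterion}\textup{(ii)}.
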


In particular, Proposition~\ref{prop:survival-alpha-le-one} yields survival in the cases where
\begin{equation}\label{eq:cond_nalpha}
\PP(|\xi_1|\ge n)\sim n^{-\alpha}L_\xi(n),\qquad n\to\infty
\end{equation}
with $0<\alpha<1$ and $L_\xi$ slowly varying, as well as in the borderline case $\alpha=1$ under the additional assumption that $L_\xi(n)\to\infty$. Moreover, by additionally assuming that both $\E[\eta]$ and $\E\!\left[\frac{\pi}{1-\pi}\right]$ are finite, we can also establish survival criteria for the case $\alpha=1$ when
\(
0<\liminf_{n\to\infty}L_\xi(n)\le
\limsup_{n\to\infty}L_\xi(n)<\infty.
\)

\begin{proposition}\label{prop_regularly_alpha1}
Assume that the one-step increment $\xi_1$ is symmetric and satisfies \eqref{eq:cond_nalpha} with $\alpha=1$ and $L_\xi$ slowly varying. Suppose that
\(
0<\E[\eta_0]<\infty\) and \(
0<\E[\frac{\pi}{1-\pi}]<\infty.
\)
Then the following items hold:
\begin{itemize}
    \item[(i)] If
    \[
    \liminf_{n\to\infty}L_\xi(n)>
    \frac{4}{\E\!\left[\frac{\pi\eta_0}{1-\pi}\right]},
    \]
    then the interval-activation frog model survives with positive probability.

    \item[(ii)] Assume additionally that
    \(\E[(1-\pi)^{-2}]<\infty.
    \)
    If
    \[
    0<\liminf_{n\to\infty}L_\xi(n)
    \le
    \limsup_{n\to\infty}L_\xi(n)
    <
    \frac{1}{2\E\!\left[\frac{\pi\eta_0}{1-\pi}\right]},
    \]
    then the interval-activation frog model dies out almost surely.
\end{itemize}
\end{proposition}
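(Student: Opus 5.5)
The plan is to derive both items from Theorem~\ref{thm:frog-criterion}, using the one-big-jump mechanism. Conditionally on $\pi=p$ we have $\E[L\mid\pi=p]=p/(1-p)$. Since $\pi$ and $\eta_0$ are independent, $\E[\pi\eta_0/(1-\pi)]=\E[\eta_0]\,\E[\pi/(1-\pi)]$. Write $q_n:=\PP(|\xi_1|\ge n)\sim n^{-1}L_\xi(n)$. By symmetry, $\PP(\xi_1\ge n)\ge q_n/2$. I would aim for two estimates:
\[
\liminf_{n\to\infty}\frac{n\PP(D^\rightarrow\ge n)}{L_\xi(n)}\ge \tfrac14\E\Big[\tfrac{\pi}{1-\pi}\Big],
\qquad
\limsup_{n\to\infty}\frac{n\PP(D^*\ge n)}{L_\xi(n)}\le \E\Big[\tfrac{\pi}{1-\pi}\Big].
\]
Plugging the first into Theorem~\ref{thm:frog-criterion}(ii) gives exactly condition (i). Plugging the second into Theorem~\ref{thm:frog-criterion}(i) gives exactly condition (ii).

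\emph{Lower bound.} Fix $p$ and decompose according to the first index $m\le L$ with $|\xi_m|\ge n$. Let $A_m$ be the event that $|\xi_j|<n$ for all $j<m$, that $\xi_m\ge n$, and that $S_{m-1}\ge0$. On $A_m$ we have $S_m\ge n$, and the $A_m$ are disjoint. Conditionally on $\{|\xi_j|<n,\ j<m\}$, the increments $\xi_1,\dots,\xi_{m-1}$ are still i.i.d.\ and symmetric, so $\PP(S_{m-1}\ge0\mid\cdot)\ge 1/2$. Hence
\[
\PP(D^\rightarrow\ge n\mid\pi=p)\ge \sum_{m\ge1}p^m(1-q_n)^{m-1}\frac{q_n}{2}\cdot\frac12=\frac{q_n}{4}\,\frac{p}{1-p(1-q_n)}.
\]
Multiplying by $n/L_\xi(n)$ and integrating over $\nu$, Fatou's lemma (or monotone convergence in $q_n\downarrow0$) gives the claimed liminf $\tfrac14\E[\pi/(1-\pi)]$. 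This step needs only $\E[\pi/(1-\pi)]<\infty$, and in fact only positivity of this quantity.

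\emph{Upper bound; this is the main obstacle.} Here uniformity in the random horizon $L$ is needed, and this is where $\E[(1-\pi)^{-2}]<\infty$, hence $\E[L^2]<\infty$, enters. Fix $\delta\in(0,1)$ and truncate the increments at level $\delta n$. Let $\bar\xi_j:=\xi_j\mathbf 1_{\{|\xi_j|<\delta n\}}$, which is symmetric and hence centred, and let $\bar S$ be the corresponding walk. For a fixed horizon $N$,
\[
\PP\Big(\max_{m\le N}|S_m|\ge n\Big)\le N\,\PP(|\xi_1|\ge \delta n)+\PP\Big(\max_{m\le N}|\bar S_m|\ge n\Big).
\]
I would refine the first term as follows. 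Either exactly one jump is large, in which case the remaining walk must move at least $(1-\delta)n$ before or after it, or else that jump alone has size at least $(1-2\delta)n$. Alternatively, two large jumps occur, which has probability at most $N^2q_{\delta n}^2$. For the truncated part, apply Doob's $L^2$ inequality:
\[
\PP\Big(\max_{m\le N}|\bar S_m|\ge \delta n\Big)\le \frac{N\,\E[\xi_1^2\mathbf 1_{\{|\xi_1|<\delta n\}}]}{\delta^2n^2}\le C\frac{N\,L_\xi(n)}{\delta n}.
\]
This bound uses Karamata's theorem, which gives truncated second moment $\asymp n L_\xi(n)$ when $\alpha=1$. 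Even so, this term is only of order $N/n$ times a constant independent of small $\delta$, so it is not negligible. I expect the honest route to be a bound of the form $N q_{(1-2\delta)n}+C N^2 q_{\delta n}^2+$ (small-jump large-deviation term). The last term should be controlled by a Fuk--Nagaev-type inequality with a level above $\delta n$, which makes it $o(N/n)$ uniformly for $N\le \epsilon n$. The large-$N$ part $\{L>\epsilon n\}$ is handled by Markov's inequality together with $\E[L^2]<\infty$: it contributes $\PP(L>\epsilon n)\le \E[L^2]/(\epsilon n)^2=o(1/n)$. Averaging over $L$ and $\pi$ with $\E[L]=\E[\pi/(1-\pi)]$ and $\E[L^2]<\infty$, then letting $n\to\infty$, $\epsilon\to0$ and finally $\delta\to0$, yields $\limsup n\PP(D^*\ge n)/L_\xi(n)\le\E[\pi/(1-\pi)]$. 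Combined with the assumed bound on $\limsup L_\xi$, this gives $\limsup n\PP(D^*\ge n)<1/(2\E[\eta_0])$, and Theorem~\ref{thm:frog-criterion}(i) then yields extinction.
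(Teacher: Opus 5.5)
Your part (i) is correct and takes a different route from the paper. The paper observes that a jump of size at least $2n$ made from inside $[-n,n]$ forces $D^*\ge n$. It then obtains $\PP\bigl(\bigcup_{i\le L}\{|\xi_i|\ge 2n\}\bigr)\sim\E[L]\,\PP(|\xi_1|\ge 2n)$ by dominated convergence, which is where $\E[\pi/(1-\pi)]<\infty$ is used. Finally it passes to $D^\rightarrow$ through $\PP(D^\rightarrow\ge n)\ge\frac12\PP(D^*\ge n)$. Your first-large-jump decomposition with $\PP(S_{m-1}\ge0\mid\cdot)\ge\frac12$ loses the same factor $4$ but needs only monotone convergence. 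So it does not use finiteness of $\E[\pi/(1-\pi)]$ at all. When that expectation is infinite and $\liminf L_\xi>0$, it gives survival directly and bypasses the paper's $\pi_a=\min\{\pi,a\}$ coupling.

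Part (ii), however, is not proved. The decisive estimate is that everything except one jump of size about $n$ is negligible on the scale $N L_\xi(n)/n$. You delegate it to an unspecified Fuk--Nagaev-type inequality that ``should'' work. The case description (``the remaining walk must move at least $(1-\delta)n$'') also does not match the bound you then write. Your reason for discarding Kolmogorov/Doob is a bookkeeping error, not a real obstruction: the event $\{\max_{m\le N}|\bar S_m|\ge\delta n\}$ never has to be paid for on its own. Incidentally, your own display gives $CNL_\xi(n)/(\delta n)$, which is not uniform in small $\delta$.

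Split according to the number of indices $j\le N$ with $|\xi_j|\ge\delta n$.
\begin{itemize}
\item If there is none, then $S_m=\bar S_m$ for $m\le N$. Kolmogorov's inequality at level $n$, together with $\E[\xi_1^2\mathbf 1_{\{|\xi_1|<x\}}]=O(xL_\xi(x))$ (Karamata), gives at most $C\delta\,N L_\xi(n)/n$. This is a factor $\delta$ times the main term.
\item If there is exactly one such index $k$, then either $|\xi_k|\ge(1-2\delta)n$, or the truncated walk with the $k$-th increment deleted exceeds $2\delta n$ in absolute value. The latter event is independent of $\{|\xi_k|\ge\delta n\}$. So this case costs at most $Nq_{(1-2\delta)n}+Nq_{\delta n}\cdot CNL_\xi(n)/(\delta n)=Nq_{(1-2\delta)n}+O_\delta\bigl(N^2L_\xi(n)^2/n^2\bigr)$.
\item Two or more such indices cost at most $N^2q_{\delta n}^2$, which is of the same order.
\end{itemize}
These bounds are uniform in $N$. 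Averaging over $N=L$ with $\E[L^2]<\infty$ yields $\limsup_n n\PP(D^*\ge n)/L_\xi(n)\le\E[\pi/(1-\pi)]\bigl((1-2\delta)^{-1}+C\delta\bigr)$, and letting $\delta\downarrow0$ finishes. Neither the $\epsilon$-splitting of $L$ nor a Fuk--Nagaev bound is needed.

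The paper instead bounds $D^*\le\sum_{i\le L}|\xi_i|$ and cites the subexponential random-sum asymptotic of Foss--Korshunov--Zachary. That result requires $\E[L^2]<\infty$ together with $\PP(|\xi_1|\ge n/k)\le ck\,\PP(|\xi_1|\ge n)$, and the latter is where $\liminf L_\xi>0$ enters. Completed as above, your route is self-contained and does not need that lower bound, but as written the proposal stops short of the key step.
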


By performing a change of variables in the corresponding integral, one observes that if $\pi$ has a density satisfying \eqref{eq:edge-behavior-pi}, then the behavior of
\(
\frac{u}{1-u}f_\pi(u)
\)
as $u\uparrow1$ is equivalent to that of
\(
t^{-\beta}\ell(t)
\)
as $t\to\infty$. Consequently,
\(
\E[\frac{\pi}{1-\pi}]<\infty
\)
for $\beta>1$, whereas
\(
\E[\frac{\pi}{1-\pi}]=\infty
\)
for $\beta<1$. The borderline case $\beta=1$ depends on the asymptotic behavior of the slowly varying function $\ell$. By a similar argument,
\(
\E[(1-\pi)^{-2}]<\infty
\)
for $\beta>2$, while
\(
\E[(1-\pi)^{-2}]=\infty
\)
for $\beta<2$, and the case $\beta=2$ again depends on $\ell$.

Moreover, a straightforward coupling argument shows that
\(
\PP(\mathrm{FM}(\mathbb{Z},\pi,\eta,S)\text{ survives})
\)
is nondecreasing with respect to the stochastic order of \(\pi\).
Indeed, if
\(
\PP(\pi'\le x)\ge \PP(\pi\le x)
\)
for every \(x\in(0,1)\), then \(\pi'\) is stochastically dominated
by \(\pi\), and one can couple the models
\(
\mathrm{FM}(\mathbb{Z},\pi',\eta,S)
\)
and
\(
\mathrm{FM}(\mathbb{Z},\pi,\eta,S)
\)
so that they have the same initial number of frogs at every vertex
and the same random walk trajectories, while every frog in the former
model has a lifetime no larger than its counterpart in the latter.

Consequently, when \(\beta<1\), although
\(
\E[\frac{\pi}{1-\pi}]=\infty,
\)
survival for the case \(\alpha=1\) with
\(
\liminf_{n\to\infty}L_\xi(n)>0
\)
can still be established as follows. For \(0<a<1\), define
\[
\pi_a:=\min\{\pi,a\}.
\]
Then \(\pi_a\le\pi\) almost surely and
\[
\E\!\left[\frac{\pi_a}{1-\pi_a}\right]
\le
\frac{a}{1-a}
<
\infty.
\]
Moreover, by the Monotone Convergence Theorem,
\[
\E\!\left[\frac{\pi_a}{1-\pi_a}\right]
\uparrow
\E\!\left[\frac{\pi}{1-\pi}\right]
=
\infty
\qquad\text{as }a\uparrow1.
\]
Since \(\pi_a\) and \(\eta_0\) are independent,
\[
\E\!\left[\frac{\pi_a\eta_0}{1-\pi_a}\right]
\uparrow\infty.
\]
Thus, \(a\) can be chosen sufficiently close to \(1\) so that
the condition in Proposition~\ref{prop_regularly_alpha1}\textup{(i)}
is satisfied. That proposition gives survival for the model with
survival parameter \(\pi_a\), and the coupling above then gives
survival for the original model with survival parameter \(\pi\).

The next subsection treats the stable and finite-variance regimes
and includes, in particular, the case where both
\(
\E[\frac{\pi}{1-\pi}]=\infty
\)
and
\(
\lim_{n\to\infty}n\PP(\xi_1\ge n)=0,
\)
for which a more delicate analysis is required.

\subsection{Stable regime \texorpdfstring{\(1<\alpha<2\)}{1 < alpha < 2} and the finite-variance regime}\label{sec:alpha-ge-one}

In this subsection we treat jointly the case \(\alpha\in(1,2)\) and the
finite-variance case. Since the corresponding results admit a common
formulation, we introduce notation that allows us to state them simultaneously.

Let \(\pi\) have law \(\nu\), and
assume that \(\nu\) has density \(f_\pi\) on \((0,1)\) satisfying, for
some \(\beta>0\) and some slowly varying function
\(\ell:(0,\infty)\to(0,\infty)\),
\begin{equation}\label{eq:fpi-edge}
  f_\pi(u)\sim (1-u)^{\beta-1}\,\ell\!\left(\frac{1}{1-u}\right),
  \qquad u\uparrow 1.
\end{equation}

Integrating \eqref{asymtau} with respect to the law \(\nu\) gives
\[
  \PP(D^\rightarrow\ge n)
  =\int_{(0,1)}\E\bigl(u^{\tau_n^+}\bigr)\,\nu(du).
\]
Under the density assumption above, for each \(n\ge1\) define
\[
  A_n:=\int_0^1 n\,\E\bigl(u^{\tau_n^+}\bigr)\,f_\pi(u)\,du
  =n\,\PP(D^\rightarrow\ge n).
\]

Throughout this subsection, the increments retain the standing symmetry
assumption from Section~\ref{modeln}. We shall consider the following two
regimes:
\begin{itemize}
  \item[(a)] the heavy-jump regime, where
  \begin{equation}\label{eq:tail-xi}
    \PP(|\xi_1|>x)\sim c_\xi x^{-\alpha},
    \qquad x\to\infty,
  \end{equation}
  for some \(\alpha\in(1,2)\) and \(c_\xi>0\). Equivalently, under the
  symmetry assumption, the law of \(\xi_1\) belongs to the normal domain of
  attraction of a symmetric strictly \(\alpha\)-stable law;

  \item[(b)] the finite-variance regime, where
  \[
    \E[\xi_1]=0,
    \qquad
    \Var(\xi_1)=\sigma^2\in(0,\infty),
  \]
  for some \(\sigma>0\).
\end{itemize}
Accordingly, we write
\[
r=
\begin{cases}
\alpha, & \text{in regime \textup{(a)}},\\
2, & \text{in regime \textup{(b)}}.
\end{cases}
\]
In regime~\textup{(a)}, let $X_\xi$ denote the symmetric strictly
$\alpha$-stable Lévy process with Lévy measure
\[
\Lambda_\xi(dx)=\frac{\alpha}{2}c_\xi|x|^{-\alpha-1}\,dx.
\]
Its scale therefore depends on $c_\xi$. Let $B=(B_t)_{t\ge0}$ be the
standard Brownian motion. We write $\Gamma$ for the Gamma function.

\begin{proposition}
\label{prop:An-order}
Assume \eqref{eq:fpi-edge}. Then, in each of the two regimes
\textup{(a)} and \textup{(b)} above, if \(0<\beta<1\), one has
\begin{equation}\label{eq:An-order-unified}
 \lim_{n\to \infty} \frac{A_n}{n^{1-r\beta}\,\ell(n^r)}=
\begin{cases}
\Gamma(\beta)\,\E\!\big[(\tau_1^+(X_\xi))^{-\beta}\big],
& \text{in regime \textup{(a)}},\\[1mm]
2\,\Gamma(2\beta)\big(\frac{\sigma^2}{2}\big)^\beta,
& \text{in regime \textup{(b)}}.
\end{cases}
\end{equation}
\end{proposition}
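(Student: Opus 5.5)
Substituting \(u=e^{-s/n^r}\) exposes the first-passage time at its natural scale. We write
\[
A_n=n\int_0^1 \E\bigl(u^{\tau_n^+}\bigr)f_\pi(u)\,du
= n\int_0^\infty \E\bigl(e^{-s\tau_n^+/n^r}\bigr)\,f_\pi(e^{-s/n^r})\,e^{-s/n^r}\,\frac{ds}{n^r}.
\]
For fixed \(s\), as \(n\to\infty\), \(1-e^{-s/n^r}\sim s n^{-r}\), so by \eqref{eq:fpi-edge} and slow variation
\[
f_\pi(e^{-s/n^r})\sim s^{\beta-1}n^{-r(\beta-1)}\ell(n^r/s)\sim s^{\beta-1}n^{r(1-\beta)}\ell(n^r).
\]
The prefactor is therefore \(n\cdot n^{-r}\cdot n^{r(1-\beta)}\ell(n^r)=n^{1-r\beta}\ell(n^r)\). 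Next we use the functional limit theorem: \(S_{\lfloor n^r t\rfloor}/n\) converges to \(X_\xi\) in regime (a) and to \(\sigma B\) in regime (b). We expect the auxiliary section to supply \(\tau_n^+/n^r\Rightarrow\tau_1^+(X_\xi)\), respectively \(\tau_1^+(\sigma B)\). Passage from convergence of the process to convergence of the hitting times needs that the limit process immediately exceeds level \(1\) after hitting it, which holds for symmetric stable processes and for Brownian motion. Bounded convergence then gives \(\E e^{-s\tau_n^+/n^r}\to\E e^{-s T}\), where \(T\) is the limiting passage time. The candidate limit is
\[
\int_0^\infty s^{\beta-1}\E e^{-sT}\,ds=\Gamma(\beta)\E[T^{-\beta}].
\]
In regime (b), \(T=\tau_1^+(\sigma B)\stackrel d=\sigma^{-2}/N^2\) with \(N\) standard normal. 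Hence \(\E T^{-\beta}=\sigma^{2\beta}\E|N|^{2\beta}=\sigma^{2\beta}2^\beta\Gamma(\beta+\tfrac12)/\sqrt\pi\). By the duplication formula, \(\Gamma(\beta)\Gamma(\beta+\frac12)=2^{1-2\beta}\sqrt\pi\,\Gamma(2\beta)\), so the product equals \(2\Gamma(2\beta)(\sigma^2/2)^\beta\), matching the statement. (Note \(\E T^{-\beta}<\infty\), since \(T^{-1}\) has Gaussian-type tails; in the stable case \(\E[\tau_1^+(X_\xi)^{-\beta}]<\infty\) follows from \(\PP(\tau_1^+\le t)\le 2\PP(X_t\ge1)\le Ct\) by the reflection inequality, together with \(\beta<1\).)

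The main work is justifying dominated convergence in \(s\) over \((0,\infty)\), which we split into three ranges. On \(s\in[\varepsilon,K]\) the convergence is uniform and there is nothing to do.

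For large \(s\) (i.e. \(u\) bounded away from \(1\) up to scale), we need a uniform small-time estimate of the form \(\PP(\tau_n^+\le m)\le C m/n^r\) for \(m\le n^r\). In the stable case this follows from Ottaviani/Lévy maximal inequalities together with \(\PP(S_m\ge n)\le Cm n^{-\alpha}\), the latter from a one-big-jump bound in the normal domain of attraction. In the finite-variance case the analogous bound is \(\PP(\tau_n^+\le m)\le C e^{-cn^2/m}\) plus a truncation, or more crudely Chebyshev-type \(Cm/n^2\), which is enough. From this, \(\E e^{-s\tau_n^+/n^r}\le C'/s\) uniformly in \(n\), by summing \(e^{-sm/n^r}\) against the increments of \(\PP(\tau_n^+\le m)\). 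Since \(\beta<1\), \(s^{\beta-1}\cdot s^{-1}\) is integrable at infinity. The factor \(\ell(n^r/s)/\ell(n^r)\) is controlled by Potter bounds at the cost of \(s^{\pm\delta}\) with \(\delta<1-\beta\). The range where \(u\) is not near \(1\) (i.e. \(s\gtrsim n^r\)) is where \eqref{eq:fpi-edge} fails; there we use integrability of \(f_\pi\) and the bound \(\E u^{\tau_n^+}\le \PP(\tau_n^+\le m)+u^m\) with \(m=n^{r-\epsilon}\). This part contributes \(O(n\cdot n^{-\epsilon'})\)-type terms, which are negligible against \(n^{1-r\beta}\) only if the exponents are tuned. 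We would instead bound it by \(n\,\PP(\tau_n^+\le M)+n\,a^{M}\) with \(M=\log^2 n\), and this is \(o(n^{1-r\beta}\ell(n^r))\) since \(r\beta<1\)... though this requires \(1-r\beta>\) the small-time rate. We expect this bookkeeping to be the main obstacle, and would try to handle it with the \(Cm/n^r\) bound, which gives \(n\cdot M n^{-r}\), and that is fine because \(1-r<1-r\beta\).

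For small \(s\), the integrand is bounded by \(s^{\beta-1\mp\delta}\), which is integrable at \(0\), using \(\E e^{-s\tau/n^r}\le1\) and Potter bounds. Combining the three ranges yields \eqref{eq:An-order-unified}.
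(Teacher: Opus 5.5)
Your proposal is correct and follows essentially the paper's proof. You rescale $1-u$ at order $n^{-r}$ and use $\tau_n^+/n^r\Rightarrow\tau_1^+$ (Proposition~\ref{prop:tau-scaling}) for pointwise convergence. You dominate with Potter bounds near $s=0$, and for large $s$ with the uniform small-time bound $\PP(\tau_n^+\le tn^r)\le Ct$, which gives $\E e^{-s\tau_n^+/n^r}\le C'/s$. You discard the range where $u$ stays away from $1$ via an $O(n^{1-r}\log^2 n)$ bound, which is negligible because $\beta<1$ (not because $r\beta<1$, as you momentarily wrote).

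The remaining differences are cosmetic. Your exact substitution $u=e^{-s/n^r}$ replaces the paper's sandwich $e^{-x\tau_n^+/(1-\eta)}\le(1-x)^{\tau_n^+}\le e^{-x\tau_n^+}$ and the final $\eta\downarrow0$ step. Your computations via $\tau_1^+(\sigma B)\stackrel d=\sigma^{-2}N^{-2}$ with the duplication formula, and via L\'evy's reflection inequality, are valid substitutes for the paper's Laplace-transform and Doney--Savov arguments.
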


\begin{theorem}
\label{thm:survival-extinction}
Assume $0<\E[\eta_0]<\infty$ and \eqref{eq:fpi-edge}. Define the motion-dependent constant
\[
\kappa:=
\begin{cases}
\Gamma\!\left(\frac1\alpha\right)\E\!\bigl[(\tau_1^+(X_\xi))^{-1/\alpha}\bigr],
& \text{in regime \textup{(a)}},\\[1mm]
\sqrt{2}\,\sigma,
& \text{in regime \textup{(b)}}.
\end{cases}
\]
In the stable regime, $\kappa$ depends on both $\alpha$ and the tail
constant $c_\xi$ through the scale of $X_\xi$. More explicitly, if $X_0$
denotes the normalized stable process with Lévy measure
$\frac{\alpha}{2}|x|^{-\alpha-1}\,dx$, then self-similarity gives
\[
\kappa=c_\xi^{1/\alpha}\Gamma\!\left(\frac1\alpha\right)
\E\!\left[(\tau_1^+(X_0))^{-1/\alpha}\right].
\]
The following assertions hold in each of the two regimes
\textup{(a)} and \textup{(b)} above:
\begin{enumerate}
  \item[\textup{(i)}] If \(0<\beta<1/r\), then \(A_n\to\infty\). In particular, Theorem~\ref{thm:frog-criterion}\textup{(ii)}
  implies that \(\mathrm{FM}(\mathbb{Z},\pi,\eta,S)\) survives with positive probability.

  \item[\textup{(ii)}] If \(\beta=1/r\), then
  \[
    \lim_{n\to\infty}\frac{A_n}{\ell(n^r)}=\kappa.
  \]
  In particular, this is a critical regime. If
  \[
    \liminf_{n\to\infty}\ell(n^r)>
    \frac{1}{\E[\eta_0]\kappa},
  \]
  then Theorem~\ref{thm:frog-criterion}\textup{(ii)} yields survival with
  positive probability for \(\mathrm{FM}(\mathbb{Z},\pi,\eta,S)\). If
  \[
    \limsup_{n\to\infty}\ell(n^r)<
    \frac{1}{4\,\E[\eta_0]\kappa},
  \]
  then Theorem~\ref{thm:frog-criterion}\textup{(i)}, together with \eqref{compara_d}, implies that
  \(\mathrm{FM}(\mathbb{Z},\pi,\eta,S)\) dies out almost surely.

  \item[\textup{(iii)}] If \(\beta>1/r\), then \(A_n\to0\). Again, Theorem~\ref{thm:frog-criterion}\textup{(i)}, together with \eqref{compara_d},
  implies that \(\mathrm{FM}(\mathbb{Z},\pi,\eta,S)\) dies out almost surely.
\end{enumerate}
\end{theorem}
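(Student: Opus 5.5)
The plan is to read all three items off Proposition~\ref{prop:An-order}, which applies directly whenever \(\beta<1\). The only case it does not cover is \(\beta\ge1\) in item (iii), and I will handle that by a stochastic-domination argument. Since \(r\in(1,2]\), we have \(1/r<1\), so items (i) and (ii), and item (iii) with \(1/r<\beta<1\), are all within the scope of Proposition~\ref{prop:An-order}.

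\emph{Items (i) and (ii).} For (i), \(0<\beta<1/r\) gives \(1-r\beta>0\). Proposition~\ref{prop:An-order} yields \(A_n\sim C_\beta n^{1-r\beta}\ell(n^r)\) with \(C_\beta>0\). Potter bounds give \(\ell(n^r)\ge n^{-\delta}\) eventually for any \(\delta>0\), so \(A_n\to\infty\). Then \(\liminf n\PP(D^\rightarrow\ge n)=\infty>1/\E[\eta_0]\), and Theorem~\ref{thm:frog-criterion}(ii) gives survival.

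For (ii), set \(\beta=1/r\), so the exponent \(1-r\beta\) vanishes. The limiting constant must then match \(\kappa\):
\begin{itemize}
\item in regime (a) it is \(\Gamma(1/\alpha)\E[(\tau_1^+(X_\xi))^{-1/\alpha}]\) by definition;
\item in regime (b) it is \(2\Gamma(1)(\sigma^2/2)^{1/2}=\sqrt2\,\sigma\).
\end{itemize}
The self-similarity formula for \(\kappa\) follows from \(X_\xi\stackrel d=c_\xi^{1/\alpha}X_0\) with time unchanged. Hence \(\tau_1^+(X_\xi)\stackrel d=c_\xi^{-1}\tau_1^+(X_0)\), after checking that scaling the Lévy measure by \(c_\xi\) scales space by \(c_\xi^{1/\alpha}\).

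Survival: \(\liminf A_n=\kappa\liminf\ell(n^r)>1/\E[\eta_0]\), so Theorem~\ref{thm:frog-criterion}(ii) applies. Extinction: \eqref{compara_d} gives \(n\PP(D^*\ge n)\le 2A_n\), hence \(\limsup n\PP(D^*\ge n)\le 2\kappa\limsup\ell(n^r)<1/(2\E[\eta_0])\), and Theorem~\ref{thm:frog-criterion}(i) applies. This is where the factor four comes from.

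\emph{Item (iii).} If \(1/r<\beta<1\), Proposition~\ref{prop:An-order} together with Potter bounds (now \(\ell(n^r)\le n^{\delta}\) with \(\delta<r\beta-1\)) gives \(A_n\to0\). Then \(\limsup n\PP(D^*\ge n)\le 2\limsup A_n=0\), and Theorem~\ref{thm:frog-criterion}(i) gives extinction.

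For \(\beta\ge1\), which is the main obstacle, I will use monotonicity. Since \(u\mapsto\E[u^{\tau_n^+}]\) is nondecreasing, \(A_n\) is nondecreasing in the stochastic order of \(\pi\). Fix \(\beta'\in(1/r,1)\) and let \(V\) be independent of \(\pi\) with density \(\beta'(1-u)^{\beta'-1}\) on \((0,1)\). Put \(\pi'=\max\{\pi,V\}\ge\pi\), so \(A_n(\pi)\le A_n(\pi')\).

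The density of \(\pi'\) is
\[
f_\pi(u)F_V(u)+f_V(u)F_\pi(u).
\]
As \(u\uparrow1\), we have \(F_V,F_\pi\to1\). Moreover \(f_\pi(u)=(1-u)^{\beta-1}\ell(1/(1-u))\,(1+o(1))=o\bigl((1-u)^{\beta'-1}\bigr)\), because \(\beta>\beta'\) and Potter bounds control \(\ell\). Hence the density of \(\pi'\) satisfies \eqref{eq:fpi-edge} with exponent \(\beta'\) and constant slowly varying function \(\beta'\). The previous case then gives \(A_n(\pi')\to0\), so \(A_n(\pi)\to0\), and extinction follows as before.
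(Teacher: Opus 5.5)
Your proposal is correct. Items (i), (ii) and the range \(1/r<\beta<1\) of (iii) are handled exactly as in the paper: you read them off Proposition~\ref{prop:An-order}, check that \(2\Gamma(1)(\sigma^2/2)^{1/2}=\sqrt2\,\sigma\), and use \(n\PP(D^*\ge n)\le 2A_n\) to get the factor four.

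For \(\beta\ge1\) you take a genuinely different route. The paper does not couple. It fixes \(\beta_1\in(1/r,1)\) and uses \eqref{eq:fpi-edge} to bound \(f_\pi(u)\le C(1-u)^{\beta_1-1}\ell(1/(1-u))\) on \((1-\eta,1)\). It then splits \(A_n\) at \(1-\eta\), and the bulk piece is \(O(n^{1-r})\) by Lemma~\ref{lem:moment-away-from-1}. Finally it applies Proposition~\ref{prop:An-order} to the normalized truncated weight \(w/a\), which keeps the original \(\ell\) and needs Potter bounds to ensure \(0<a<\infty\).

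You instead use that \(u\mapsto\E[u^{\tau_n^+}]\) is nondecreasing, together with the pointwise domination \(\pi\le\pi'=\max\{\pi,V\}\). This gives the global inequality \(A_n(\pi)\le A_n(\pi')\) with no splitting of the integral. The auxiliary law has density \(f_\pi F_V+f_V F_\pi\sim\beta'(1-u)^{\beta'-1}\). The product rule is legitimate here because both distribution functions are absolutely continuous, and \(x^{\beta-\beta'}\ell(1/x)\to0\) makes the first term negligible. So the auxiliary edge is a pure power with constant slowly varying factor, and no normalization constant is needed.

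The cost of your route is the coupling and the density-of-a-maximum computation. The paper's route is a purely analytic density comparison. Both arguments ultimately rest on applying Proposition~\ref{prop:An-order} at an auxiliary exponent in \((1/r,1)\), and yours in fact covers every \(\beta>1/r\) at once.
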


\begin{remark}
For $\beta=1/r$, the preceding theorem identifies the exact asymptotic of
$n\PP(D^\rightarrow\ge n)$, but the two firework comparisons yield only
sufficient criteria. The survival and extinction constants differ by a factor of
four. Thus the off-critical threshold $\beta_c=1/r$ is sharp, whereas the full
phase diagram on the critical line is not determined by the present argument.
\end{remark}

\begin{remark}
For the simple symmetric nearest-neighbour walk, $\sigma^2=1$ and interval
activation coincides with the usual frog-model activation rule. Hence
$\beta_c=1/2$ and $\kappa=\sqrt{2}$. At $\beta=1/2$, the sufficient conditions
in Theorem~\ref{thm:survival-extinction} become
\[
\liminf_{n\to\infty}\ell(n^2)>\frac{1}{\sqrt{2}\,\E[\eta_0]}
\quad\text{for survival},
\qquad
\limsup_{n\to\infty}\ell(n^2)<\frac{1}{4\sqrt{2}\,\E[\eta_0]}
\quad\text{for extinction}.
\]
This specialization agrees with the off-critical nearest-neighbour threshold in
\cite{CarvalhoMachado2025,JGF}.
\end{remark}

A schematic summary of the survival and extinction regimes established in
Proposition~\ref{prop:survival-alpha-le-one} and
Theorem~\ref{thm:survival-extinction} is given in
Figure~\ref{fig:diagrama_SW_alpha}. For $0<\alpha<1$, the model survives
with positive probability for every $\beta>0$. At $\alpha=1$, the same
conclusion holds under the additional assumption $L_\xi(n)\to\infty$.

In the stable regime $1<\alpha<2$, the curve
$\beta_c(\alpha)=1/\alpha$ is the sharp off-critical threshold: the model
survives with positive probability when $\beta<1/\alpha$ and becomes
extinct almost surely when $\beta>1/\alpha$. On the critical curve
$\beta=1/\alpha$, with $1<\alpha<2$, Theorem~\ref{thm:survival-extinction}
gives survival if
\[
\liminf_{n\to\infty}\ell(n^\alpha)
>
\frac{1}{\E[\eta_0]\kappa},
\]
and almost-sure extinction if
\[
\limsup_{n\to\infty}\ell(n^\alpha)
<
\frac{1}{4\E[\eta_0]\kappa}.
\]
The behavior in the remaining cases on this critical curve is not
determined by the present comparison arguments.

The finite-variance regime is represented separately in the figure by
$\mathrm{FV}$, corresponding to the scaling index $r=2$. In this regime,
the model survives when $\beta<1/2$ and becomes extinct almost surely when
$\beta>1/2$. At $\beta=1/2$, since $\kappa=\sqrt{2}\sigma$, the sufficient
conditions become
\[
\liminf_{n\to\infty}\ell(n^2)
>
\frac{1}{\sqrt{2}\sigma\E[\eta_0]}
\]
for survival, and
\[
\limsup_{n\to\infty}\ell(n^2)
<
\frac{1}{4\sqrt{2}\sigma\E[\eta_0]}
\]
for almost-sure extinction; the remaining critical cases are again not
resolved by the present comparisons.

\begin{figure}[t]
  \centering
  \includegraphics[width=.98\textwidth]{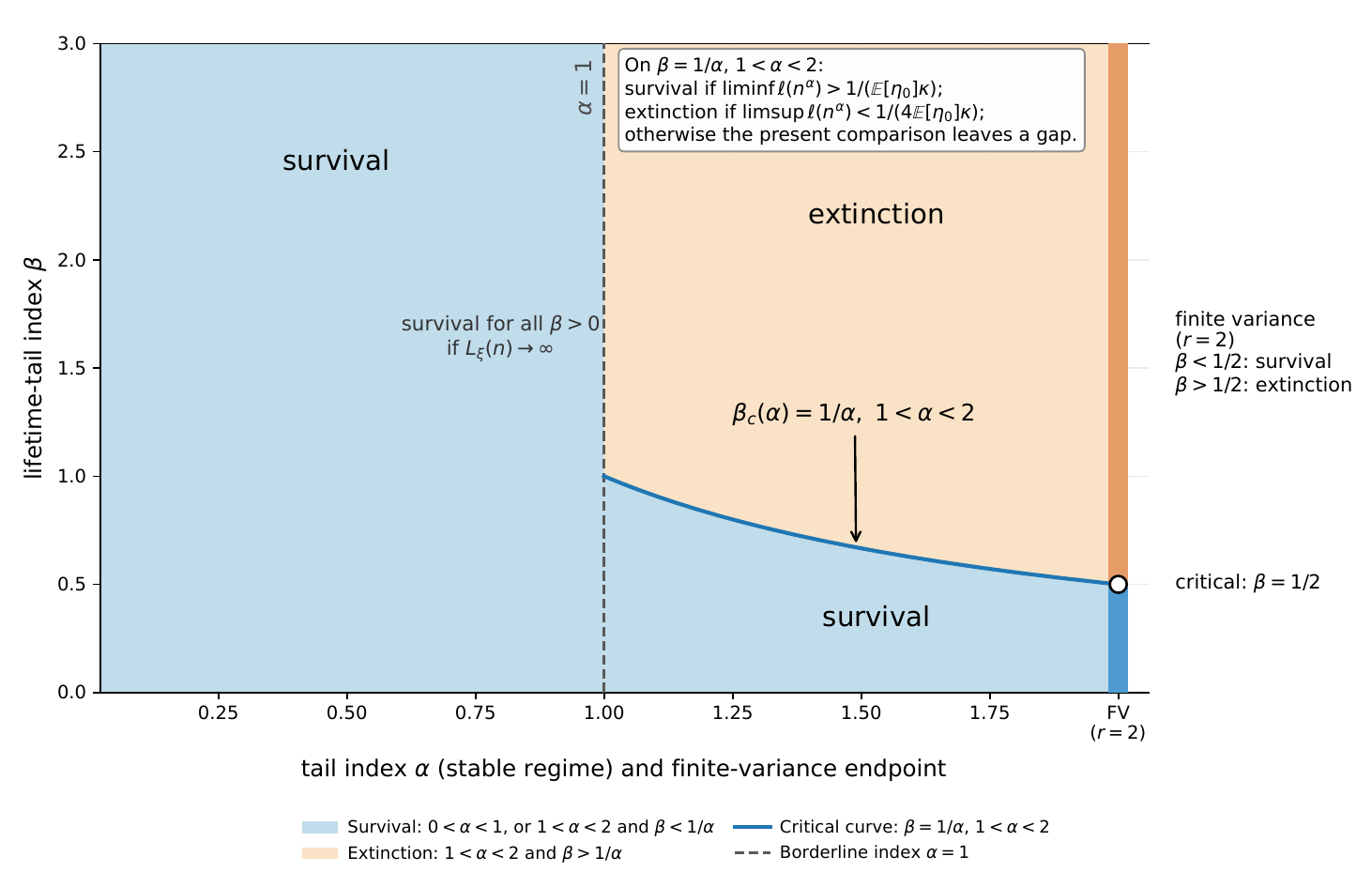}
    \caption{Schematic phase diagram for the survival, extinction, and
  unresolved critical regimes covered by
  Proposition~\ref{prop:survival-alpha-le-one} and
  Theorem~\ref{thm:survival-extinction}.}
  \label{fig:diagrama_SW_alpha}
\end{figure}

\begin{remark}

Suppose, more generally, that \eqref{eq:tail-xi-lattice0} holds for some \(\alpha>0\), with \(L_\xi\) slowly varying. Then
\[
\E[|\xi_1|^\rho]<\infty
\qquad\text{for every }0<\rho<\alpha,
\]
whereas
\[
\E[|\xi_1|^\rho]=\infty
\qquad\text{for every }\rho>\alpha.
\]
At the critical exponent \(\rho=\alpha\), the moment
\(\E[|\xi_1|^\alpha]\) may be either finite or infinite, depending on the slowly varying function \(L_\xi\).

Consequently, if $\alpha>2$, then $\E[\xi_1^2]<\infty$ and, under the
standing symmetry assumption, this case is included in the
finite-variance regime. In particular, the corresponding off-critical
threshold is $\beta_c=1/2$.

At the borderline value $\alpha=2$, the second moment may be either
finite or infinite, depending on the slowly varying factor. We give two
explicit symmetric integer-valued examples.

For the finite-variance example, define
\[
C_{\xi,1}
:=
\left(
2\sum_{k=2}^{\infty}\frac{1}{k^3(\log k)^2}
\right)^{-1}
\]
and let
\[
\PP(\xi_1=k)=\PP(\xi_1=-k)
=
\frac{C_{\xi,1}}{k^3(\log k)^2},
\qquad k\ge2.
\]
Then
\[
\PP(|\xi_1|\ge n)
=
2C_{\xi,1}
\sum_{k=n}^{\infty}\frac{1}{k^3(\log k)^2}
\sim
\frac{C_{\xi,1}}{n^2(\log n)^2},
\qquad n\to\infty,
\]
whereas
\[
\E[\xi_1^2]
=
2C_{\xi,1}
\sum_{k=2}^{\infty}\frac{1}{k(\log k)^2}
<\infty.
\]
Thus, this example has a regularly varying tail of index $-2$ and is
already covered by the finite-variance regime and, in particular, by
Lemma~\ref{thm:Donsker-Gaussian}.

For the infinite-variance example, define
\[
C_{\xi,2}
:=
\left(
2\sum_{k=1}^{\infty}\frac{1}{k^3}
\right)^{-1}
=
\frac{1}{2\zeta(3)}
\]
and let
\[
\PP(\xi_1=k)=\PP(\xi_1=-k)
=
\frac{C_{\xi,2}}{k^3},
\qquad k\ge1.
\]
In this case,
\[
\PP(|\xi_1|\ge n)
=
2C_{\xi,2}
\sum_{k=n}^{\infty}\frac{1}{k^3}
\sim
C_{\xi,2}n^{-2},
\qquad n\to\infty,
\]
but
\[
\E[\xi_1^2]
=
2C_{\xi,2}
\sum_{k=1}^{\infty}\frac{1}{k}
=
\infty.
\]
Hence, this example also has a regularly varying tail of index $-2$,
but it does not belong to the finite-variance regime.

The functional convergence of the rescaled random walk used in our
analysis is precisely the result stated in Lemma~\ref{Convgsl}. Its
proof is based on \cite[Theorem~1.1]{TyranKaminska2010SPA}, with
\cite[Corollary~1.2]{TyranKaminska2010SPA} used to verify the required
local-dependence condition. The results in that reference are formulated
for stable indices $\alpha\in(0,2)$ and therefore do not cover the
borderline value $\alpha=2$. Consequently, the infinite-variance
example above is covered neither by Lemma~\ref{thm:Donsker-Gaussian}
nor by Lemma~\ref{Convgsl}. Its treatment would require establishing a
separate functional limit theorem with the appropriate normalization
and then deriving the corresponding asymptotic behavior of the
first-passage time $\tau_n^+$.
\end{remark}

\section{Proofs}\label{proofmain}

In this section, we provide the proofs of the main results presented in Section~\ref{mainr}.

\begin{proof}[Proof of Theorem~\ref{thm:frog-criterion}]
We first record the link between dynamical survival and spatial percolation.
Because the common law $\nu$ is supported on $(0,1)$, every
$\pi_{x,i}<1$ almost surely. Hence every individual lifetime is finite almost
surely, and by countability this holds simultaneously for all frogs. Moreover, only
finitely many sites can be activated by any fixed time. Hence, if only finitely
many sites are ever activated, then only finitely many frogs are activated and
all of them die after a finite time. Conversely, once no active frogs remain, no further site can be activated. Therefore, survival is equivalent,
almost surely, to activation of infinitely many sites.

We now use the same environment to construct the firework comparisons. With
$N_x=\eta_x$ and $R_{x,i}=D_{x,i}^*$, the bidirectional process
$\mathrm{BFW}(I^*)$ contains every interval that can be activated by the frog
model, because
$[x-D_{x,i}^{\leftarrow},x+D_{x,i}^{\rightarrow}]\subseteq
[x-D_{x,i}^*,x+D_{x,i}^*]$. Iterating this inclusion from the origin over
successive activation generations shows that every site activated by the frog
model is informed by $\mathrm{BFW}(I^*)$. Thus
\[
\PP(\mathrm{BFW}(I^*)\text{ percolates})\ge
\PP\bigl(\mathrm{FM}(\mathbb Z,\pi,\eta,S)\text{ survives}\bigr).
\]
Before applying the firework criteria, we verify the standing nontriviality
condition in the firework setup of \cite{CarvalhoMachado2025}. For either
displacement radius $R\in\{D^*,D^\rightarrow\}$ of a single frog, let
$q_R:=\PP(R=0)$ and define the corresponding radius at site $x$ by
\[
I_x^{(R)}:=\max_{1\le i\le\eta_x}R_{x,i},
\]
with the maximum over the empty set equal to $0$. Since
$\{L=0\}\subseteq\{R=0\}$, one has $q_R\ge\E[1-\pi]>0$.
If $\PP(R>0)>0$, then $q_R<1$. Moreover, $0<\E[\eta_0]<\infty$ implies that $\eta_0<\infty$ almost surely and $\PP(\eta_0>0)>0$. Therefore the radius at the origin satisfies
\[
0<\PP(I_0^{(R)}=0)=\E[q_R^{\eta_0}]<1.
\]
For part~\textup{(i)}, the case $\PP(D^*>0)=0$ gives immediate extinction;
otherwise the preceding check, with $I_0^{(R)}=I_0^*$, permits the application
of \cite[Lemma~2.3(i)]{CarvalhoMachado2025}.

With $N_x=\eta_x$ and $R_{x,i}=D_{x,i}^{\rightarrow}$, the right-going
firework process informs the interval $[x,x+I_x^+]\cap\mathbb Z$. If
$\eta_x>0$, the maximum $I_x^+$ is attained by one of the finitely many frogs
at $x$, and that frog activates this entire interval under the interval rule;
if $\eta_x=0$, then $I_x^+=0$ and there is no nontrivial firework interval.
Iterating this inclusion over successive informed sites shows that every site
informed by $\mathrm{FW}(I^+)$ is activated by the frog model. Hence
\[
\PP(\mathrm{FW}(I^+)\text{ percolates})\le
\PP\bigl(\mathrm{FM}(\mathbb Z,\pi,\eta,S)\text{ survives}\bigr).
\]
The assumption in part~\textup{(ii)} implies $\PP(D^\rightarrow>0)>0$,
so the same calculation gives $\PP(I_0^+=0)\in(0,1)$. Hence
part~\textup{(ii)} follows from
\cite[Lemma~2.3(iii)]{CarvalhoMachado2025}.
\end{proof}

\begin{proof}[Proof of Proposition~\ref{prop:survival-alpha-le-one}]
The event $\{\tau_n^+=1\}=\{\xi_1\ge n\}$, together with survival through the
first attempted step, implies $D^\rightarrow\ge n$. Therefore,
\[
\PP(D^\rightarrow\ge n\mid\pi=p)\ge p\,\PP(\xi_1\ge n),
\]
and averaging over $\pi$ gives
\[
\PP(D^\rightarrow\ge n)\ge \E[\pi]\PP(\xi_1\ge n).
\]
Since \(0\le\pi<1\) and \(\PP(\pi>0)>0\), one has
\(0<\E[\pi]<1\). Moreover, by symmetry,
\[
\PP(\xi_1\ge n)
=
\frac12\PP(|\xi_1|\ge n).
\]
Consequently, under the hypothesis,
\[
n\PP(D^\rightarrow\ge n)
\ge
\frac{\E[\pi]}{2}\,
n\PP(|\xi_1|\ge n)
\longrightarrow\infty.
\]
The result follows from Theorem~\ref{thm:frog-criterion}\textup{(ii)}.
\end{proof}

\begin{proof}[Proof of Proposition~\ref{prop_regularly_alpha1}]
(i) Since
\(L\mid\pi=p
\)
is a geometric random variable with parameter $1-p$ supported on $\{0,1,\ldots\}$,
\begin{equation}\label{eq:esp_finita}
\E[L]
=
\E[\E[L\mid\pi]]
=
\E\!\left[\frac{\pi}{1-\pi}\right]
<
\infty.
\end{equation}

Observe that whenever a random walk is inside the interval
\(
[-n,n]\cap\mathbb{Z},
\)
a single jump whose absolute value is at least \(2n\) takes it to a
position whose absolute value is at least \(n\). Therefore, by
\eqref{compara_d},
\begin{equation}\label{eq:passo_2n}
\PP\left(\bigcup_{i=1}^{L}\{|\xi_i|\ge2n\}\right)
\le
\PP(D^*\ge n)
\le
2\PP(D^\rightarrow\ge n).
\end{equation}

Let
\(
p_n:=\PP(|\xi_1|\ge2n).
\)
Conditioning on $L$, we obtain
\(
\PP(\cup_{i=1}^{L}\{|\xi_i|\ge2n\}\mid L=k)
=
1-(1-p_n)^k,
\)
and therefore
\[
\PP\left(\bigcup_{i=1}^{L}\{|\xi_i|\ge2n\}\right)
=
\E\!\left[1-(1-p_n)^L\right].
\]

Furthermore,
\begin{equation}\label{conditions_dom_conv}
\frac{1-(1-p_n)^k}{p_n}\stackrel{p_n\downarrow 0}{\longrightarrow}k\, \text{ for all $k\in\mathbb{N}$} \qquad \text{and} \qquad \frac{1-(1-p_n)^L}{p_n}\le L,\end{equation}
The convergence follows directly from the binomial expansion of $(1-p_n)^k$, whereas the inequality is a consequence of the union bound. Combining \eqref{eq:esp_finita} and \eqref{conditions_dom_conv}, the Dominated Convergence Theorem yields
\[
\PP\left(\bigcup_{i=1}^{L}\{|\xi_i|\ge2n\}\right)
\sim
\E\!\left[\frac{\pi}{1-\pi}\right]
\PP(|\xi_1|\ge2n),
\qquad
n\to\infty.
\]

Hence, by \eqref{eq:passo_2n},
\[
\liminf_{n\to\infty}n\PP(D^\rightarrow\ge n)
\ge
\frac{\E[\frac{\pi}{1-\pi}]}{2}
\liminf_{n\to\infty}n\PP(|\xi_1|\ge 2n)
=
\frac{\E[\frac{\pi}{1-\pi}]}{4}
\liminf_{n\to\infty}L_\xi(n).
\]
Here the replacement of \(L_\xi(2n)\) by \(L_\xi(n)\) follows from
the fact that \(L_\xi\) is slowly varying.
The conclusion now follows from Theorem~\ref{thm:frog-criterion} \textup{(ii)}.

\medskip

(ii) Using once again the moments of the geometric distribution,
\begin{equation}\label{eq:esp_l2}
\E[L^2]
=
\E[\E[L^2\mid\pi]]
=
\E\!\left[
\frac{\pi+\pi^2}{(1-\pi)^2}
\right]
<
\infty.
\end{equation}

For $n\in\mathbb{N}$, define
\(
L'_\xi(n):=
n\PP(|\xi_1|\ge n).
\)
By assumption, there exist constants
\(
c_-,c_+\in(0,\infty)
\)
such that
\[
c_-
=
\liminf_{n\to\infty}L'_\xi(n)
=
\liminf_{n\to\infty}L_\xi(n)
\le
\limsup_{n\to\infty}L_\xi(n)
=
\limsup_{n\to\infty}L'_\xi(n)
=
c_+.
\]
Hence, there exists $n_0\in\mathbb{N}$ such that
\(
L'_\xi(n)\in[c_-/2,\,2c_+]\)
for every \(n>n_0.
\)
Moreover,
\(
0<
\PP(|\xi_1|\ge n_0)
\le
L'_\xi(n)
\le
n_0
<
\infty
\)
for every $n\le n_0$. Therefore, there exists an interval
\(
[a,b]\subset(0,\infty)
\)
such that
\(
L'_\xi(n)\in[a,b]
\)
 for every \(n\in\mathbb{N}.
\)
In particular, for every \(n,k\in\mathbb{N}\), since
\(|\xi_1|\) is integer-valued,
\begin{equation}\label{eq:cond_1}
\begin{aligned}
\PP(|\xi_1|\ge n/k)
&=
\PP\!\left(|\xi_1|\ge\left\lceil n/k\right\rceil\right) \\
&=
\frac{L'_\xi(\left\lceil n/k\right\rceil)}
{\left\lceil n/k\right\rceil}
\le
\frac{b}{\left\lceil n/k\right\rceil}
\le
\frac{bk}{n} \\
&\le
\frac{b}{a}\,k\,\frac{L'_\xi(n)}{n}
=
\frac{b}{a}\,k\,\PP(|\xi_1|\ge n).
\end{aligned}
\end{equation}
Since the tail of \(|\xi_1|\) is regularly varying with index
\(-1\), its distribution is subexponential. Therefore, by the
result stated immediately after Theorem~3.37 in \cite{FKZ},
together with conditions \eqref{eq:esp_l2} and
\eqref{eq:cond_1}, we obtain
\[
\PP\left(
\sum_{i=1}^{L}|\xi_i|
\ge
n
\right)
\sim
\E[L]\PP(|\xi_1|\ge n),\qquad n\to\infty.
\]

Consequently,
\[
\limsup_{n\to\infty}
n\PP(D^*\ge n)
\le
\limsup_{n\to\infty}
n
\PP\left(
\sum_{i=1}^{L}|\xi_i|
\ge
n
\right)
=
\E\!\left[\frac{\pi}{1-\pi}\right]
\limsup_{n\to\infty}
L_\xi(n).
\]
The result now follows from Theorem~\ref{thm:frog-criterion}\textup{(i)}.
\end{proof}

\begin{proof}[Proof of Proposition~\ref{prop:An-order}]
By Proposition~\ref{prop:tau-scaling},
\[
\frac{\tau_n^+}{n^r}\Rightarrow \tau_1^+(X_\xi)
\quad\text{in regime \textup{(a)}},
\qquad
\frac{\tau_n^+}{n^r}\Rightarrow \tau_1^+(\sigma B)
\quad\text{in regime \textup{(b)}}.
\]

By \eqref{eq:fpi-edge}, for every \(\varepsilon\in(0,1)\) there exists
\(\eta=\eta(\varepsilon)\in(0,1)\) such that
\begin{equation}\label{eq:fpi-two-sided}
  (1-\varepsilon)(1-u)^{\beta-1}\,
  \ell\!\left(\frac{1}{1-u}\right)
  \le f_\pi(u)\le
  (1+\varepsilon)(1-u)^{\beta-1}\,
  \ell\!\left(\frac{1}{1-u}\right),
  \quad u\in(1-\eta,1).
\end{equation}

Fix \(\varepsilon\in(0,1)\), and let \(\eta=\eta(\varepsilon)\in(0,1)\) be
given by \eqref{eq:fpi-two-sided}. By decreasing \(\eta\) if necessary, we
may assume that \(\eta<1/2\). We decompose
\[
A_n=A_n^{(1)}+A_n^{(2)},
\]
where
\[
A_n^{(1)}
:=\int_0^{1-\eta} n\,\E\bigl(u^{\tau_n^+}\bigr)\,f_\pi(u)\,du,
\qquad
A_n^{(2)}
:=\int_{1-\eta}^1 n\,\E\bigl(u^{\tau_n^+}\bigr)\,f_\pi(u)\,du.
\]

We first deal with \(A_n^{(1)}\). By Lemma~\ref{lem:moment-away-from-1},
there exist constants \(K_\eta<\infty\) and \(N_\eta\in\N\) such that
\[
\E\bigl(u^{\tau_n^+}\bigr)\le K_\eta n^{-r},
\qquad n\ge N_\eta,\quad u\in[0,1-\eta].
\]
Hence, for \(n\ge N_\eta\),
\[
0\le A_n^{(1)}
\le n\sup_{u\in[0,1-\eta]}\E\bigl(u^{\tau_n^+}\bigr)
   \int_0^{1-\eta}f_\pi(u)\,du
\le K_\eta n^{1-r}\int_0^{1-\eta}f_\pi(u)\,du
\le K_\eta n^{1-r}.
\]
Dividing by \(n^{1-r\beta}\ell(n^r)\), we obtain
\[
0\le
\frac{A_n^{(1)}}{n^{1-r\beta}\ell(n^r)}
\le
K_\eta\,
\frac{1}{n^{r-r\beta}\ell(n^r)}.
\]
Since \(0<\beta<1\), one has \(r-r\beta>0\), and therefore
\[
\frac{1}{n^{r-r\beta}\ell(n^r)}\xrightarrow[n\to\infty]{}0
\]
by slow variation of \(\ell\). Consequently,
\begin{equation}\label{eq:A1-limit-zero-final}
\lim_{n\to\infty}\frac{A_n^{(1)}}{n^{1-r\beta}\ell(n^r)}=0.
\end{equation}

We now turn to \(A_n^{(2)}\). Writing \(u=1-x\), we obtain
\[
A_n^{(2)}
=\int_0^\eta n\,\E\bigl((1-x)^{\tau_n^+}\bigr)\,f_\pi(1-x)\,dx.
\]
By \eqref{eq:fpi-two-sided},
\[
(1-\varepsilon)x^{\beta-1}\ell(1/x)
\le f_\pi(1-x)\le
(1+\varepsilon)x^{\beta-1}\ell(1/x),
\qquad x\in(0,\eta).
\]
Moreover, for \(x\in(0,\eta)\),
\[
x\le -\log(1-x)\le \frac{x}{1-\eta},
\]
and therefore
\[
e^{-x\tau_n^+/(1-\eta)}
\le (1-x)^{\tau_n^+}
\le e^{-x\tau_n^+}.
\]
Consequently,
\begin{equation}\label{eq:A2-two-sided-final-formal}
\begin{aligned}
&(1-\varepsilon)\int_0^\eta
n\,\E\bigl(e^{-x\tau_n^+/(1-\eta)}\bigr)\,
x^{\beta-1}\ell(1/x)\,dx \\
&\qquad\le A_n^{(2)} \\
&\qquad\le
(1+\varepsilon)\int_0^\eta
n\,\E\bigl(e^{-x\tau_n^+}\bigr)\,
x^{\beta-1}\ell(1/x)\,dx.
\end{aligned}
\end{equation}

For \(c>0\), define
\[
J_n(c):=
\int_0^\eta
n\,\E\bigl(e^{-cx\tau_n^+}\bigr)\,
x^{\beta-1}\ell(1/x)\,dx.
\]
With the change of variables \(x=y/n^r\), we obtain
\[
J_n(c)
=
n^{1-r\beta}
\int_0^{\eta n^r}
\E\!\left(e^{-cy\tau_n^+/n^r}\right)\,
y^{\beta-1}\ell\!\left(\frac{n^r}{y}\right)\,dy.
\]
Hence \eqref{eq:A2-two-sided-final-formal} becomes
\begin{equation}\label{eq:A2-sandwich-Jn-final-formal}
(1-\varepsilon)J_n\!\left(\frac{1}{1-\eta}\right)
\le A_n^{(2)}
\le (1+\varepsilon)J_n(1).
\end{equation}

We next determine the asymptotic behavior of \(J_n(c)\). Fix
\(\delta\in(0,\min\{\beta,1-\beta\})\) and \(A>1\). Since \(\ell\) is slowly
varying, Potter's bound \cite[Theorem~1.5.6]{BinghamGoldieTeugels1987} implies that, after decreasing \(\eta\) if necessary,
for all sufficiently large \(n\) and all \(y\in(0,\eta n^r]\),
\[
\frac{\ell(n^r/y)}{\ell(n^r)}
\le A\max\{y^\delta,y^{-\delta}\}.
\]
Thus
\[
\frac{J_n(c)}{n^{1-r\beta}\ell(n^r)}
=\int_0^\infty g_n(y)\,dy,
\]
where
\[
g_n(y):=
\mathbf{1}_{(0,\eta n^r]}(y)\,
\E\!\left(e^{-cy\tau_n^+/n^r}\right)\,
y^{\beta-1}\frac{\ell(n^r/y)}{\ell(n^r)}.
\]

For \(0<y\le1\), using \(\E(e^{-cy\tau_n^+/n^r})\le1\) and Potter's bound, we
obtain
\[
g_n(y)\le A\,y^{\beta-1-\delta},
\]
which is integrable near \(0\) because \(\delta<\beta\).

For large \(y\), fix \(t_0\in(0,1)\), let \(C_0>0\) be the constant from
Lemma~\ref{lem:fast-Tn}, and set
\[
A_0:=\max\{1,(ct_0)^{-1}\}.
\]
If \(y\ge A_0\), define
\[
F_n(t):=\PP\!\left(\frac{\tau_n^+}{n^r}\le t\right).
\]
Since $n\ge1$, one has $F_n(0)=0$. With the convention
$e^{-cy\infty}=0$, Stieltjes integration by parts therefore yields
\[
\E\!\left(e^{-cy\tau_n^+/n^r}\right)
=cy\int_0^\infty e^{-cyt}F_n(t)\,dt.
\]
Since $1/(cy)\le t_0$, Lemma~\ref{lem:fast-Tn} gives
\(F_n(t)\le C_0t\) for \(t\in(0,t_0]\), and therefore
\[
\begin{aligned}
\E\!\left(e^{-cy\tau_n^+/n^r}\right)
&\le cy\int_0^{t_0}e^{-cyt}F_n(t)\,dt
   +cy\int_{t_0}^\infty e^{-cyt}\,dt\\
&\le C_0cy\int_0^\infty te^{-cyt}\,dt+e^{-cyt_0}
 = \frac{C_0}{cy}+e^{-cyt_0}.
\end{aligned}
\]
Using Potter's bound once more, for \(y\ge A_0\),
\[
g_n(y)\le \frac{AC_0}{c}\,y^{\beta+\delta-2}
        +A\,e^{-cyt_0}y^{\beta+\delta-1}.
\]
This upper bound is integrable on $(A_0,\infty)$ because
$\beta+\delta<1$. On the compact interval $[1,A_0]$, Potter's bound and the
trivial estimate $\E(e^{-cy\tau_n^+/n^r})\le1$ give a finite constant bound
(independent of $n$). Together with the bound near zero, this provides an
integrable dominating function on $(0,\infty)$ for each fixed $c>0$. For
every fixed \(y>0\), Proposition~\ref{prop:tau-scaling} gives
\[
\E\!\left(e^{-cy\tau_n^+/n^r}\right)\longrightarrow
\begin{cases}
\E\!\left(e^{-cy\tau_1^+(X_\xi)}\right), & \text{in regime \textup{(a)}},\\[1mm]
\E\!\left(e^{-cy\tau_1^+(\sigma B)}\right), & \text{in regime \textup{(b)}}.
\end{cases}
\]
Since also
\[
\frac{\ell(n^r/y)}{\ell(n^r)}\longrightarrow 1,
\]
dominated convergence implies
\[
\lim_{n\to\infty}\frac{J_n(c)}{n^{1-r\beta}\ell(n^r)}
=
\begin{cases}
\displaystyle
\int_0^\infty \E\!\left(e^{-cy\tau_1^+(X_\xi)}\right)y^{\beta-1}\,dy,
& \text{in regime \textup{(a)}},\\[3mm]
\displaystyle
\int_0^\infty \E\!\left(e^{-cy\tau_1^+(\sigma B)}\right)y^{\beta-1}\,dy,
& \text{in regime \textup{(b)}}.
\end{cases}
\]
Making the change of variables \(t=cy\), we obtain
\[
\lim_{n\to\infty}\frac{J_n(c)}{n^{1-r\beta}\ell(n^r)}
=
c^{-\beta}
\begin{cases}
\displaystyle
\int_0^\infty \E\!\left(e^{-t\tau_1^+(X_\xi)}\right)t^{\beta-1}\,dt,
& \text{in regime \textup{(a)}},\\[3mm]
\displaystyle
\int_0^\infty \E\!\left(e^{-t\tau_1^+(\sigma B)}\right)t^{\beta-1}\,dt,
& \text{in regime \textup{(b)}}.
\end{cases}
\]

In regime \textup{(a)}, Lemma~\ref{lem:negative-moments-stable} gives
\[
\E\!\big[(\tau_1^+(X_\xi))^{-\beta}\big]<\infty
\qquad\text{for every }0<\beta<1,
\]
while in regime \textup{(b)}, Corollary~\ref{cor:BM-negative-moments} yields
\[
\E\!\big[(\tau_1^+(\sigma B))^{-\beta}\big]=\frac{2\,\Gamma(2\beta)}{\Gamma(\beta)}
    \Bigl(\frac{\sigma^2}{2}\Bigr)^\beta<\infty
\qquad\text{for every }\beta>0.
\]
Hence Tonelli's theorem and the Gamma identity imply
\begin{equation}\label{eq:Jnc-limit-final-formal}
\lim_{n\to\infty}\frac{J_n(c)}{n^{1-r\beta}\ell(n^r)}
=
c^{-\beta}C_\beta,
\end{equation}
where \[C_\beta=\begin{cases}
\Gamma(\beta)\,\E\!\big[(\tau_1^+(X_\xi))^{-\beta}\big],
& \text{in regime \textup{(a)}},\\[1mm]
2\,\Gamma(2\beta)
    \Bigl(\frac{\sigma^2}{2}\Bigr)^\beta,
& \text{in regime \textup{(b)}}.
\end{cases}\]

Applying \eqref{eq:Jnc-limit-final-formal} with \(c=1\) and
\(c=(1-\eta)^{-1}\), and using \eqref{eq:A2-sandwich-Jn-final-formal}, we
obtain
\[
(1-\varepsilon)(1-\eta)^\beta C_\beta
\le
\liminf_{n\to\infty}\frac{A_n^{(2)}}{n^{1-r\beta}\ell(n^r)}
\]
and
\[
\limsup_{n\to\infty}\frac{A_n^{(2)}}{n^{1-r\beta}\ell(n^r)}
\le
(1+\varepsilon)C_\beta.
\]
For every fixed admissible \(\eta\), equation
\eqref{eq:A1-limit-zero-final} shows that replacing \(A_n^{(2)}\) by the full
quantity \(A_n=A_n^{(1)}+A_n^{(2)}\) does not change either the limit inferior
or the limit superior. Hence
\[
(1-\varepsilon)(1-\eta)^\beta C_\beta
\le
\liminf_{n\to\infty}\frac{A_n}{n^{1-r\beta}\ell(n^r)}
\le
\limsup_{n\to\infty}\frac{A_n}{n^{1-r\beta}\ell(n^r)}
\le
(1+\varepsilon)C_\beta.
\]
For the fixed \(\varepsilon\), inequality \eqref{eq:fpi-two-sided} remains
valid for every smaller positive cutoff. We may therefore run the preceding
argument with arbitrarily small \(\eta\), let \(\eta\downarrow0\) in the lower
bound, and then let \(\varepsilon\downarrow0\). This gives
\[
\lim_{n\to\infty}\frac{A_n}{n^{1-r\beta}\ell(n^r)}=C_\beta,
\]
which is exactly \eqref{eq:An-order-unified}.
\end{proof}

\begin{proof}[Proof of Theorem~\ref{thm:survival-extinction}]
Assertions~\textup{(i)} and~\textup{(ii)}, and assertion~\textup{(iii)} when
$1/r<\beta<1$, follow directly from Proposition~\ref{prop:An-order}. It remains
to prove $A_n\to0$ for $\beta\ge1$.

Choose $\beta_1\in(1/r,1)$. By \eqref{eq:fpi-edge}, there exist
$\eta\in(0,1)$ and $C<\infty$ such that, for $u\in(1-\eta,1)$,
\[
f_\pi(u)\le C(1-u)^{\beta-1}\ell\!\left(\frac1{1-u}\right)
\le C(1-u)^{\beta_1-1}\ell\!\left(\frac1{1-u}\right).
\]
Decompose $A_n=A_n^{(1)}+A_n^{(2)}$ at $1-\eta$. Lemma~\ref{lem:moment-away-from-1}
gives $A_n^{(1)}=O(n^{1-r})=o(1)$.

Set
\[
w(u):=(1-u)^{\beta_1-1}\ell\!\left(\frac1{1-u}\right)
\mathbf 1_{(1-\eta,1)}(u),
\qquad a:=\int_0^1w(u)\,du.
\]
Potter's bound implies $0<a<\infty$. Thus $\widetilde f:=w/a$ is a density
satisfying
\[
\widetilde f(u)\sim a^{-1}(1-u)^{\beta_1-1}
\ell\!\left(\frac1{1-u}\right),\qquad u\uparrow1.
\]
Let $\widetilde A_n$ denote the quantity $A_n$ formed with the density
$\widetilde f$. Applying Proposition~\ref{prop:An-order} to $\widetilde f$
(and hence to the slowly varying factor $a^{-1}\ell$) gives
\[
\int_{1-\eta}^1n\E[u^{\tau_n^+}]w(u)\,du
=a\widetilde A_n
=O\!\left(n^{1-r\beta_1}\ell(n^r)\right)=o(1),
\]
because $r\beta_1>1$. Since
\[
A_n^{(2)}\le C\int_{1-\eta}^1n\E[u^{\tau_n^+}]w(u)\,du,
\]
it follows that \(A_n^{(2)}=o(1)\), and hence \(A_n\to0\).

For completeness, we now spell out the phase implications. In case~\textup{(i)},
$n\PP(D^\rightarrow\ge n)\to\infty$. Thus
Theorem~\ref{thm:frog-criterion}\textup{(ii)} gives survival with positive
probability. At criticality, the relation $A_n/\ell(n^r)\to\kappa$ gives the
stated survival criterion. For the extinction criterion, symmetry yields
\[
n\PP(D^*\ge n)\le 2A_n,
\]
so the stated upper bound on $\limsup_n\ell(n^r)$ implies
$\limsup_n n\PP(D^*\ge n)<1/(2\E[\eta_0])$ and
Theorem~\ref{thm:frog-criterion}\textup{(i)} applies. The same inequality,
together with $A_n\to0$, proves extinction throughout case~\textup{(iii)}.
\end{proof}

\section{Auxiliary results}\label{auxsec}

This section collects the auxiliary results used in the proof of the main theorems.
Throughout, we consider the random walk \(S=(S_n)_{n\ge0}\) on \(\mathbb{Z}\) defined by
\[
S_0:=0,
\qquad
S_n:=\sum_{k=1}^n \xi_k,\qquad n\ge1,
\]
where \((\xi_k)_{k\ge1}\) is an i.i.d.\ sequence of symmetric, integer-valued random variables.
We work under one of the following two assumptions on the increment law: either the
heavy-jump regime, in which
\[
\PP(|\xi_1|>x)\sim c_\xi x^{-\alpha},\qquad x\to\infty,
\]
for some \(\alpha\in(1,2)\) and \(c_\xi>0\), or the finite-variance regime, in which
\[
\E[\xi_1]=0,
\qquad
\Var(\xi_1)=\sigma^2\in(0,\infty).
\]
For \(n\in\mathbb{N}\), we write
\[
\tau_n^+:=\inf\{k\ge0:S_k\ge n\},
\]
with the convention \(\inf\varnothing=\infty\). More generally, for a càdlàg function
\(f:[0,\infty)\to\mathbb{R}\) and \(x>0\), we define
\[
\tau_x^+(f):=\inf\{t\ge0:f(t)\ge x\}.
\]
The purpose of this section is to establish the functional convergence of the corresponding rescaled walks and to derive the first-passage consequences needed later in the analysis.

\subsection{Functional convergence in Skorokhod space}\label{skoro}

For each \(T>0\), let \(D([0,T],\mathbb{R}^d)\) denote the space of all
\(\mathbb{R}^d\)-valued càdlàg functions on \([0,T]\); that is, functions
\(\psi:[0,T]\to\mathbb{R}^d\) that are right-continuous on \([0,T)\) and admit
finite left limits at every \(t\in(0,T]\).

Let \(\Lambda_T\) be the set of all strictly increasing continuous mappings
\(\lambda:[0,T]\to[0,T]\) satisfying \(\lambda(0)=0\) and \(\lambda(T)=T\).
For \(\psi_1,\psi_2\in D([0,T],\mathbb{R}^d)\), define
\[
d_T(\psi_1,\psi_2)
:=
\inf_{\lambda\in\Lambda_T}
\left\{
\sup_{0\le s\le T}\|\psi_1(\lambda(s))-\psi_2(s)\|
\vee
\sup_{0\le s\le T}|\lambda(s)-s|
\right\},
\]
where \(a\vee b:=\max\{a,b\}\). This is the Skorokhod \(J_1\) metric on
\(D([0,T],\mathbb{R}^d)\).

We also write \(D([0,\infty),\mathbb{R}^d)\) for the space of all càdlàg
functions \(\psi:[0,\infty)\to\mathbb{R}^d\). On this space we consider the
metric
\[
d_\infty(\psi_1,\psi_2)
:=
\int_0^\infty e^{-t}\bigl(d_t(\psi_1|_{[0,t]},\psi_2|_{[0,t]})\wedge 1\bigr)\,dt,
\qquad
\psi_1,\psi_2\in D([0,\infty),\mathbb{R}^d),
\]
where \(a\wedge b:=\min\{a,b\}\).

The metric \(d_\infty\) generates the Skorokhod \(J_1\) topology on
\(D([0,\infty),\mathbb{R}^d)\). In addition, both
\((D([0,T],\mathbb{R}^d),d_T)\) and
\((D([0,\infty),\mathbb{R}^d),d_\infty)\) are separable; see
\cite{Whitt1980} and \cite[Section~6]{JacodShiryaev2003}.

We shall also use the following criterion for weak convergence in
\(D([0,\infty),\mathbb{R}^d)\): if \(X_n\) and \(X\) are stochastic processes
with paths in \(D([0,\infty),\mathbb{R}^d)\), then
\[
X_n\Rightarrow X
\qquad\text{in }D([0,\infty),\mathbb{R}^d)
\]
if and only if
\[
X_n\Rightarrow X
\qquad\text{in }D([0,T],\mathbb{R}^d)
\]
for every \(T\in T_X:=\{t>0:\PP(X(t)\neq X(t-))=0\}\).

\begin{lemma}\label{Convgsl}
Let \((\xi_j)_{j\ge1}\) be an i.i.d.\ sequence of symmetric, integer-valued random
variables such that
\[
\PP(|\xi_1|>x)\sim c_\xi x^{-\alpha},
\qquad x\to\infty,
\]
for some \(\alpha\in(1,2)\) and \(c_\xi>0\). Set \(S_0:=0\),
\(S_k:=\sum_{j=1}^k \xi_j\) for \(k\ge1\), and define
\[
Y_{n,c_\xi}(t):=\frac{S_{\lfloor nt\rfloor}}{(n c_\xi)^{1/\alpha}},
\qquad t\ge0.
\]
Then
\[
Y_{n,c_\xi}\Rightarrow X
\qquad \text{in } D([0,\infty),\mathbb{R}),
\]
endowed with the Skorokhod \(J_1\) topology, where \(X=(X_t)_{t\ge0}\) is a
symmetric strictly \(\alpha\)-stable Lévy process on \(\mathbb{R}\) with
characteristic function
\[
\E\big[e^{i\theta X_t}\big]
=\exp\!\big(-t\,\sigma_{\alpha}|\theta|^\alpha\big),
\qquad \theta\in\mathbb{R},
\]
for
\[
\sigma_{\alpha}:=\Gamma(1-\alpha)\cos\!\Big(\frac{\pi\alpha}{2}\Big)>0.
\]
\end{lemma}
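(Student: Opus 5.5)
The plan is the classical two-stage route: first the one-dimensional stable limit theorem for $S_n/(nc_\xi)^{1/\alpha}$, then the upgrade to functional $J_1$ convergence using the i.i.d. structure. The paper's own remark points to \cite[Theorem~1.1]{TyranKaminska2010SPA}, together with \cite[Corollary~1.2]{TyranKaminska2010SPA} for the dependence condition. For i.i.d.\ summands that condition is trivially satisfied, so the functional statement reduces to identifying the Lévy measure, the centering and the constant.

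\textbf{Step 1 (point-process / Lévy-measure convergence).} By symmetry, $\PP(\xi_1>x)=\PP(\xi_1<-x)\sim \tfrac{c_\xi}{2}x^{-\alpha}$ (up to the atom at $\pm x$, which is negligible after scaling). With $a_n:=(nc_\xi)^{1/\alpha}$ I get, for each $x>0$,
\[
n\PP(\xi_1>a_nx)\to \tfrac12 x^{-\alpha},
\]
and similarly for the negative tail. Hence $n\PP(\xi_1/a_n\in\cdot)$ converges vaguely on $\R\setminus\{0\}$ to $\Lambda(dx)=\tfrac{\alpha}{2}|x|^{-\alpha-1}dx$. This is exactly the regular-variation hypothesis of \cite[Theorem~1.1]{TyranKaminska2010SPA}. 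Since the sequence is i.i.d., the mixing and local-dependence conditions hold; \cite[Corollary~1.2]{TyranKaminska2010SPA} gives them directly for independent sequences.

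\textbf{Step 2 (centering).} For $\alpha\in(1,2)$ the theorem needs centering by $\lfloor nt\rfloor\E[\xi_1\mathbf 1_{\{|\xi_1|\le a_n\}}]/a_n$ or by the mean. Symmetry makes every truncated mean vanish, and also $\E\xi_1=0$, since $\alpha>1$ gives integrability. So no centering appears. The small-jump condition, that $\lim_{\epsilon\downarrow0}\limsup_n$ of the truncated variance term be zero, follows from Karamata's theorem:
\[
n\E[\xi_1^2\mathbf 1_{\{|\xi_1|\le \epsilon a_n\}}]/a_n^2 \sim \tfrac{\alpha}{2-\alpha}\epsilon^{2-\alpha}\to0 .
\]
This yields $Y_{n,c_\xi}\Rightarrow X$ in $D([0,\infty),\R)$ with $J_1$. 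Here $X$ is the Lévy process with triplet $(0,0,\Lambda)$, symmetric, with no drift.

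\textbf{Step 3 (characteristic exponent).} It remains to compute
\[
\int(1-\cos\theta x)\,\Lambda(dx)=\alpha|\theta|^\alpha\int_0^\infty(1-\cos u)u^{-\alpha-1}du .
\]
The linear compensator term vanishes by symmetry. The standard identity
\[
\alpha\int_0^\infty(1-\cos u)u^{-1-\alpha}du=-\Gamma(1-\alpha)\cos(\pi\alpha/2)
\]
holds for $\alpha\in(0,2)$, $\alpha\ne1$, and is obtained by integration by parts and analytic continuation of $\int_0^\infty u^{s-1}e^{iu}du$. Note that $\Gamma(1-\alpha)<0$ and $\cos(\pi\alpha/2)<0$ for $\alpha\in(1,2)$, so $\Gamma(1-\alpha)\cos(\pi\alpha/2)>0$. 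The sign convention in the identity is the delicate point, and I will recheck it carefully against the stated $\sigma_\alpha$. I expect the main obstacle to be not this computation but matching the hypotheses and normalization of \cite{TyranKaminska2010SPA} precisely: their Lévy-measure parametrization, the treatment of the lattice atoms in $\PP(|\xi_1|>x)$ versus $\ge$, and their centering convention for $\alpha\in(1,2)$. These must align so that the limit is exactly $X$ with exponent $\sigma_\alpha|\theta|^\alpha$.

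As a fallback for the whole lemma, I would combine two standard ingredients. The first is the classical one-dimensional stable CLT with the same constant, which gives convergence of the finite-dimensional distributions via independent increments. The second is Skorokhod's theorem that, for processes with i.i.d.\ increments, $J_1$ convergence in $D([0,\infty),\R)$ follows from convergence of one-dimensional marginals; see \cite[Section~6]{JacodShiryaev2003}, Corollary VII.3.6.
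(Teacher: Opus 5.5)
Your main route is the paper's: Theorem~1.1 of \cite{TyranKaminska2010SPA}, with Corollary~1.2 supplying $LD(\phi_0)$. The paper checks the one-line i.i.d.\ computation $n\sum_{j=2}^{\lfloor n/k\rfloor}\PP(|\xi_j|>\varepsilon b_n,|\xi_1|>\varepsilon b_n)\to\varepsilon^{-2\alpha}/k$ rather than just asserting it. The remaining ingredients also match: symmetry to remove the centering, Karamata's theorem for the truncated second moment, and identification of $\Lambda(dx)=\frac{\alpha}{2}|x|^{-\alpha-1}dx$. Condition~(1.6) concerns the probability of a \emph{maximum} of truncated partial sums. You therefore need Kolmogorov's maximal inequality, as the paper uses, to turn your truncated-variance asymptotic $\frac{\alpha}{2-\alpha}\varepsilon^{2-\alpha}$ into that condition.

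The one actual error is in Step~3: the cosine-integral identity you quote has the wrong sign. Integrate by parts (the boundary terms vanish for $0<\alpha<2$). Then use $\int_0^\infty u^{s-1}\sin u\,du=\Gamma(s)\sin(\pi s/2)$ for $-1<s<1$ with $s=1-\alpha$. This gives
\[
\alpha\int_0^\infty(1-\cos u)\,u^{-1-\alpha}\,du=\int_0^\infty u^{-\alpha}\sin u\,du=\Gamma(1-\alpha)\cos\Bigl(\frac{\pi\alpha}{2}\Bigr),
\]
which is positive for $\alpha\in(1,2)$, as it must be. Your version equates a positive integral with the negative number $-\Gamma(1-\alpha)\cos(\pi\alpha/2)$. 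Fed into L\'evy--Khintchine, it would give $\exp(+t\sigma_\alpha|\theta|^\alpha)$, which has modulus larger than one. With the corrected sign, $\int(e^{i\theta x}-1-i\theta x\mathbf 1_{\{|x|\le1\}})\,\Lambda(dx)=-\sigma_\alpha|\theta|^\alpha$, exactly the stated exponent.

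Your fallback is also valid and bypasses the dependence conditions entirely. It combines the one-dimensional stable CLT with Skorokhod's theorem that, for random walks, convergence at $t=1$ already implies $J_1$ convergence to the L\'evy process. It still needs the same corrected constant.
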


\begin{proof}
We establish the weak convergence of \(Y_{n,c_\xi}\) by applying
\cite[Theorem~1.1]{TyranKaminska2010SPA}. Since the sequence is i.i.d.,
its strong-mixing rate is identically zero. Symmetry also makes the truncated
centering term in that theorem vanish. It is therefore enough to verify the
local-dependence condition \(LD(\phi_0)\) and condition~(1.6) of that theorem,
which controls the accumulation of small jumps; the
remaining regular-variation assumptions follow directly from the two-sided
tail hypothesis: symmetry gives the spectral measure
$\frac12(\delta_{-1}+\delta_1)$, where $\delta_a$ denotes the unit point
mass at $a$, and the truncated centering is identically zero.

Set \(b_n:=(c_\xi n)^{1/\alpha}\), \(n\ge1\). In the notation of
\cite[(1.3)--(1.4)]{TyranKaminska2010SPA}, the centering constants are
\[
c_n=\frac{n}{b_n}\,
\E\!\left[\xi_1\mathbf 1_{\{|\xi_1|\le b_n\}}\right]=0
\]
by symmetry. Thus the partial-sum process in that theorem is exactly
\(Y_{n,c_\xi}\). The tail assumption in the statement gives
\[
n\,\PP(|\xi_1|>b_n)\to1
\qquad\text{as }n\to\infty.
\]
Since the tail of \(\xi_1\) is regularly varying with index
\(-\alpha\), for every \(\varepsilon>0\),
\[
\frac{\PP(|\xi_1|>\varepsilon b_n)}{\PP(|\xi_1|>b_n)}
\to \varepsilon^{-\alpha},
\qquad n\to\infty.
\]
Fix \(\varepsilon>0\) and \(k\ge1\). For all sufficiently large \(n\),
independence gives
\begin{equation*}
\begin{aligned}
&n\sum_{j=2}^{\lfloor n/k\rfloor}
\PP\bigl(
|\xi_j|>\varepsilon b_n,\,
|\xi_1|>\varepsilon b_n
\bigr)\\
&\qquad=
n\left(\left\lfloor\frac{n}{k}\right\rfloor-1\right)
\PP(|\xi_1|>\varepsilon b_n)^2\\
&\qquad=
\Bigl(n\,\PP(|\xi_1|>b_n)\Bigr)^2
\left(
\frac{\PP(|\xi_1|>\varepsilon b_n)}
     {\PP(|\xi_1|>b_n)}
\right)^2
\frac{\lfloor n/k\rfloor-1}{n}.
\end{aligned}
\end{equation*}
Consequently,
\[
n\sum_{j=2}^{\lfloor n/k\rfloor}
\PP\bigl(
|\xi_j|>\varepsilon b_n,\,
|\xi_1|>\varepsilon b_n
\bigr)
\longrightarrow
\frac{\varepsilon^{-2\alpha}}{k},
\qquad n\to\infty.
\]
It follows that
\[
\lim_{k\to\infty}\limsup_{n\to\infty}
n\sum_{j=2}^{\lfloor n/k\rfloor}
\PP\bigl(
|\xi_j|>\varepsilon b_n,\,
|\xi_1|>\varepsilon b_n
\bigr)
=0.
\]
Thus the hypotheses of \cite[Corollary~1.2]{TyranKaminska2010SPA} are satisfied, and the condition
\(LD(\phi_0)\) follows.
It remains to verify condition~(1.6). Fix
\(\delta>0\) and \(\varepsilon\in(0,1)\), and set
\[
  \zeta_{n,j}^{(\varepsilon)}
  :=\xi_j\mathbf 1_{\{|\xi_j|\le \varepsilon b_n\}}.
\]
Symmetry gives \(\E[\zeta_{n,j}^{(\varepsilon)}]=0\). Kolmogorov's maximal
inequality therefore yields
\[
\begin{aligned}
&\PP\!\left(
  \max_{1\le k\le n}
  \left|\sum_{j=1}^k\zeta_{n,j}^{(\varepsilon)}\right|
  \ge\delta b_n\right)\\
&\hspace{25mm}\le
  \frac{n}{\delta^2b_n^2}
  \E\!\left[\xi_1^2
    \mathbf 1_{\{|\xi_1|\le\varepsilon b_n\}}\right].
\end{aligned}
\]
We next justify the truncated-moment asymptotic and verify the hypotheses of
Karamata's theorem. Set
\[
Y:=|\xi_1|,
\qquad
\overline F(y):=\PP(Y>y).
\]
The tail assumption gives
$\overline F(y)\sim c_\xi y^{-\alpha}$, and hence
$y\overline F(y)$ is regularly varying with index $1-\alpha$. The function
$y\overline F(y)$ is measurable and locally bounded on $[1,\infty)$, while
\[
1-\alpha>-1
\]
because $\alpha<2$. Therefore Karamata's theorem
\cite[Theorem~1.5.11]{BinghamGoldieTeugels1987} yields
\[
\int_1^x y\overline F(y)\,dy
\sim \frac{x^2\overline F(x)}{2-\alpha}
\sim \frac{c_\xi}{2-\alpha}x^{2-\alpha}.
\]
Moreover, $y\overline F(y)\le y$ on $(0,1)$, so the integral over $(0,1)$ is
finite and does not affect the asymptotic. Finally, the tail-integration
identity
\[
\E\!\left[Y^2\mathbf 1_{\{Y\le x\}}\right]
=
2\int_0^x y\overline F(y)\,dy-x^2\overline F(x)
\]
follows by writing
$\min\{Y,x\}^2=Y^2\mathbf 1_{\{Y\le x\}}+x^2\mathbf 1_{\{Y>x\}}$.
Consequently,
\[
\E\!\left[\xi_1^2
  \mathbf 1_{\{|\xi_1|\le x\}}\right]
\sim
\left(\frac{2}{2-\alpha}-1\right)c_\xi x^{2-\alpha}
=
\frac{\alpha c_\xi}{2-\alpha}x^{2-\alpha},
\qquad x\to\infty.
\]
Since \(b_n^\alpha=c_\xi n\), it follows that
\[
\limsup_{n\to\infty}
\PP\!\left(
  \max_{1\le k\le n}
  \left|\sum_{j=1}^k\zeta_{n,j}^{(\varepsilon)}\right|
  \ge\delta b_n\right)
\le \frac{\alpha}{(2-\alpha)\delta^2}\,
   \varepsilon^{2-\alpha}.
\]
Letting \(\varepsilon\downarrow0\) proves condition~(1.6).

We have now verified all the assumptions of
\cite[Theorem~1.1]{TyranKaminska2010SPA}. Consequently, there exists an \(\alpha\)-stable
Lévy process \(X=(X_t)_{t\ge0}\) such that
\[
Y_{n,c_\xi}\Rightarrow X
\qquad\text{in }D([0,\infty),\mathbb{R}),
\]
endowed with the Skorokhod \(J_1\) topology.

It remains to identify the limit. For every \(x>0\), symmetry and regular
variation give
\[
 n\,\PP(\xi_1>b_nx)\longrightarrow \frac12x^{-\alpha},
 \qquad
 n\,\PP(\xi_1<-b_nx)\longrightarrow \frac12x^{-\alpha}.
\]
Thus the limiting Lévy measure has density
\[
\Lambda_\alpha(dx):=\frac{\alpha}{2}|x|^{-\alpha-1}\,dx,
\]
in agreement with the regular-variation description in
\cite[Section~2.3]{TyranKaminska2010SPA}.
Hence, by the Lévy--Khintchine formula,
\[
\E\bigl[e^{i\theta X_t}\bigr]
=
\exp\Bigg(
t\int_{\mathbb{R}}
\bigl(e^{i\theta x}-1-i\theta x\,\mathbf{1}_{\{|x|\le1\}}\bigr)\Lambda_\alpha(dx)
\Bigg),
\qquad \theta\in\mathbb{R}.
\]
Since \(\Lambda_\alpha\) is the Lévy measure of a symmetric strictly
\(\alpha\)-stable process, the exponent above is equal to
\(-t\,\sigma_\alpha|\theta|^\alpha\), where
\[
\sigma_\alpha:=\Gamma(1-\alpha)\cos\!\Bigl(\frac{\pi\alpha}{2}\Bigr)>0.
\]
Therefore,
\[
\E\bigl[e^{i\theta X_t}\bigr]
=
\exp\!\bigl(-t\,\sigma_\alpha|\theta|^\alpha\bigr),
\qquad \theta\in\mathbb{R},
\]
which completes the proof.
\end{proof}

\begin{lemma}\label{thm:Donsker-Gaussian}
Let $(\xi_j)_{j\geq 1}$ be an i.i.d.\ sequence of real-valued random variables such that
\[
\E[\xi_1]=0,
\qquad
\E[\xi_1^2]=\sigma^2\in(0,\infty).
\]
Set $S_0:=0$, $S_k:=\sum_{j=1}^k \xi_j$ for $k\ge1$, and define
\[
B_{n,\sigma}(t):=\frac{1}{\sigma\sqrt n}\,S_{\lfloor nt\rfloor},
\qquad t\ge0.
\]
Then, for every $T>0$,
\[
B_{n,\sigma}\big|_{[0,T]} \Rightarrow B
\qquad \text{in } D([0,T],\mathbb{R}),
\]
where $B$ is a standard Brownian motion on $[0,T]$. Moreover,
\[
B_{n,\sigma}\Rightarrow B
\qquad \text{in } D([0,\infty),\mathbb{R}).
\]
\end{lemma}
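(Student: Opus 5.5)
This is Donsker's invariance principle, and the plan is to derive it in two stages: convergence on each compact interval $[0,T]$, then extension to $[0,\infty)$. For fixed $T$ I will first establish convergence of the finite-dimensional distributions. For $0\le t_1<\dots<t_k\le T$, the increments $B_{n,\sigma}(t_j)-B_{n,\sigma}(t_{j-1})$ are normalized sums over disjoint blocks of i.i.d.\ variables. Each block has $\lfloor nt_j\rfloor-\lfloor nt_{j-1}\rfloor$ terms, and this count divided by $n$ tends to $t_j-t_{j-1}$. The blocks are therefore independent, and by the classical Lindeberg--L\'evy central limit theorem each increment converges to $N(0,t_j-t_{j-1})$. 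The continuous mapping theorem, applied to partial sums of the increment vector, then gives convergence of $(B_{n,\sigma}(t_1),\dots,B_{n,\sigma}(t_k))$ to the corresponding Brownian vector.

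Next I will prove tightness in $D([0,T],\mathbb R)$. The limit is continuous, so it suffices, by the standard criterion in Billingsley, to control the modulus of continuity: for every $\varepsilon>0$,
\[
\lim_{\delta\downarrow0}\limsup_{n\to\infty}\PP\Bigl(\sup_{|s-t|\le\delta,\,s,t\le T}|B_{n,\sigma}(s)-B_{n,\sigma}(t)|>\varepsilon\Bigr)=0 .
\]
Only finite variance is available, so Kolmogorov's inequality alone is too weak to sum over the $\approx T/\delta$ blocks. I plan to split $[0,T]$ into blocks of length $\delta$ and reduce the modulus to block maxima. The key estimate is Billingsley's bound
\[
\limsup_n \PP\Bigl(\max_{k\le n\delta}|S_k|\ge \varepsilon\sigma\sqrt n\Bigr)\le C\,\PP(|N|\ge \varepsilon/\sqrt\delta)\cdot(\text{const}),
\]
which is obtained from Etemadi's (or Ottaviani's) maximal inequality combined with the CLT for $S_{\lfloor n\delta\rfloor}$. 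Multiplying this by $T/\delta$ gives a bound of order $\delta^{-1}e^{-\varepsilon^2/(2\delta)}\to0$. This tightness step is the main obstacle, because everything else is routine. An alternative route I would accept if preferred is Skorokhod embedding of the walk into a Brownian motion, which yields uniform convergence in probability on $[0,T]$ directly. Alternatively, one can simply cite Donsker's theorem from Billingsley's book, since the statement is classical.

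Finally, I will pass to $D([0,\infty),\mathbb R)$ using the criterion recalled just before Lemma~\ref{Convgsl}. The Brownian paths are continuous, so $T_B=(0,\infty)$. Convergence on every $D([0,T])$ therefore gives $B_{n,\sigma}\Rightarrow B$ in $D([0,\infty),\mathbb R)$. Note that integrality and symmetry of $\xi_1$ are not used here; only $\E\xi_1=0$ and $\E\xi_1^2=\sigma^2$ matter.
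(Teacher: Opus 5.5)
Your proposal is correct and has the same structure as the paper's proof. The paper gets convergence in each $D([0,T],\mathbb{R})$ simply by citing Donsker's theorem (Billingsley, Theorem~14.1), which is your stated fallback, and then passes to $D([0,\infty),\mathbb{R})$ exactly as you do, via the restriction criterion (Billingsley, Theorem~16.7) and continuity of Brownian paths. Your optional from-scratch derivation of the compact-interval step is the textbook proof behind that citation and works as sketched if you use Ottaviani's inequality, which pairs directly with the CLT for $S_{\lfloor n\delta\rfloor}$ (the Gaussian tail then sits at $\varepsilon/(2\sqrt{\delta})$ rather than $\varepsilon/\sqrt{\delta}$, which is immaterial); Etemadi's inequality would additionally require controlling $\max_{k\le n\delta}\PP(|S_k|\ge \varepsilon\sigma\sqrt{n}/3)$.
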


\begin{proof}
Fix $T>0$. By the classical Donsker invariance principle on compact
time intervals (see \cite[Theorem~14.1]{Billingsley1999}),
\[
B_{n,\sigma}\big|_{[0,T]} \Rightarrow B
\qquad \text{in } D([0,T],\mathbb{R}),
\]
where $B$ is a standard Brownian motion on $[0,T]$.

Let \(P_n\) and \(P\) denote the laws of \(B_{n,\sigma}\) and \(B\) on
\(D([0,\infty),\mathbb{R})\), respectively, and for each \(t>0\) let
\[
r_t:D([0,\infty),\mathbb{R})\to D([0,t],\mathbb{R}),
\qquad r_t(x):=x|_{[0,t]}.
\]
Then
\[
P_n\circ r_t^{-1}\Rightarrow P\circ r_t^{-1}
\qquad\text{for every }t>0.
\]
In addition, the Brownian motion admits a version with continuous
sample paths; see, e.g., \cite[Theorem~(1.9)]{RevuzYor1999}. Then
\[
P\bigl(B(t)\neq B(t-)\bigr)=0
\qquad\text{for every }t>0.
\]
Therefore, by \cite[Theorem~16.7]{Billingsley1999},
\[
P_n\Rightarrow P
\qquad\text{in }D([0,\infty),\mathbb{R}).
\]
Equivalently,
\[
B_{n,\sigma}\Rightarrow B
\qquad\text{in }D([0,\infty),\mathbb{R}).
\]
\end{proof}

\begin{remark}\label{ConvRe}
For \(c\in\R\), define \(\Phi_c:D([0,\infty),\R)\to D([0,\infty),\R)\) by
\[
\Phi_c(f):=cf.
\]
By the definition of \(d_\infty\), for every \(f,g\in D([0,\infty),\R)\),
\[
d_\infty(\Phi_c f,\Phi_c g)\le (|c|\vee 1)\,d_\infty(f,g).
\]
Hence \(\Phi_c\) is continuous on \(D([0,\infty),\R)\). Therefore, by the
continuous mapping theorem \cite[Theorem~2.7]{Billingsley1999},
if \(Y_n\Rightarrow X\) in \(D([0,\infty),\R)\), then
\[
cY_n=\Phi_c(Y_n)\Rightarrow \Phi_c(X)=cX
\qquad\text{in }D([0,\infty),\R).
\]

In particular, under the assumptions of Lemma~\ref{Convgsl}, if
\[
Y_n(t):=\frac{S_{\lfloor nt\rfloor}}{n^{1/\alpha}},\qquad t\ge0,
\]
then
\[
Y_n=c_\xi^{1/\alpha}Y_{n,c_\xi}\Rightarrow X_\xi:=c_\xi^{1/\alpha}X
\qquad\text{in }D([0,\infty),\R),
\]
where \(X\) is the limit in Lemma~\ref{Convgsl}. Consequently, \(X_\xi\) is a
symmetric strictly \(\alpha\)-stable Lévy process with Lévy measure
\begin{equation}\label{Levygene}
\Lambda_\xi(dx)=\frac{\alpha}{2}\,c_\xi\,|x|^{-\alpha-1}\,dx,
\end{equation}
and characteristic function
\[
\E\big[e^{i\theta X_{\xi,t}}\big]
=\exp\!\big(-t\,c_\xi\sigma_\alpha|\theta|^\alpha\big),
\qquad \theta\in\R,\ t\ge0.
\]

Likewise, under the assumptions of Lemma~\ref{thm:Donsker-Gaussian}, if
\[
B_n(t):=\frac{S_{\lfloor nt\rfloor}}{\sqrt n},\qquad t\ge0,
\]
then \(B_n=\sigma B_{n,\sigma}\) and therefore
\[
B_n\Rightarrow \sigma B
\qquad\text{in }D([0,\infty),\R),
\]
where \(B\) is a standard Brownian motion. Equivalently, the limit is a
Brownian motion with covariance function
\[
\E[(\sigma B_s)(\sigma B_t)]=\sigma^2\min\{s,t\}.
\]
\end{remark}

\subsection{First-passage times and functional convergence}

\begin{lemma}
\label{lem:limit-process-basics}
Let $X$ be either a Brownian motion with variance parameter $\sigma^2>0$ or a
symmetric strictly $\alpha$-stable Lévy process with $1<\alpha<2$ and Lévy
measure \eqref{Levygene}. For $x>0$ and $t>0$:
\begin{enumerate}
\item $\tau_x^>(X):=\inf\{s>0:X_s>x\}<\infty$ almost surely;
\item $S_X(t):=\sup_{0\le s\le t}X_s$ has an absolutely continuous law.
\end{enumerate}
\end{lemma}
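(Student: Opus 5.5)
Both assertions follow from general properties of symmetric Lévy processes, and I would treat the Brownian and stable cases in parallel.

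For (1), I would use recurrence or oscillation. A symmetric Lévy process that is not identically zero satisfies $\limsup_{t\to\infty}X_t=+\infty$ almost surely.

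\begin{itemize}
\item For $\sigma B$ this is the law of the iterated logarithm, or simply the fact that $\sup_t B_t=\infty$ a.s., which follows from scaling: $\sup_t B_t \stackrel d= c\sup_t B_t$ for every $c>0$, so it lies in $\{0,\infty\}$, and it is positive a.s.
\item For the symmetric strictly $\alpha$-stable process $X_\xi$ I would use the same self-similarity argument. Put $M:=\sup_{t\ge0}X_t$. Then $M\stackrel d= cM$ for all $c>0$, so $M\in\{0,\infty\}$ a.s. By symmetry, $\PP(X_1>0)=1/2$, hence $\PP(M>0)\ge1/2$, so $\PP(M=\infty)\ge 1/2$.
\item The event $\{\limsup_t X_t=\infty\}$ is a tail event for the independent increments. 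Concretely, it is invariant under changing the path on $[0,k]$ for every $k$, since $\limsup_t (X_{k+t}-X_k)$ has the same law. The Kolmogorov $0$--$1$ law (Hewitt--Savage on the i.i.d. increments $X_{k+1}-X_k$) then gives probability one.
\end{itemize}

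Hence for every $x>0$ there is $s$ with $X_s>x$, so $\tau_x^>(X)<\infty$ a.s.

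For (2), in the Brownian case the reflection principle gives $\sup_{s\le t}\sigma B_s\stackrel d=|\sigma B_t|$, whose law has a density. In the stable case I would argue as follows.

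\begin{itemize}
\item First, there is no atom at $0$. Since $X$ is symmetric with unbounded variation ($\alpha>1$), the point $0$ is regular for $(0,\infty)$ (Rogozin's criterion), so $S_X(t)>0$ a.s.
\item For absolute continuity on $(0,\infty)$, I would use the Wiener--Hopf factorisation. For an independent exponential time $e_q$, the law of $S_X(e_q)$ is absolutely continuous. This holds because the ascending ladder height process has no drift component charging points (creeping upward fails: a symmetric stable process with $\alpha<2$ has no Gaussian part and does not creep), and its potential measure is absolutely continuous, in fact explicitly $\propto x^{\alpha/2-1}dx$ by stability.
\item Then $\PP(S_X(e_q)\in A)=\int_0^\infty qe^{-qt}\PP(S_X(t)\in A)\,dt=0$ for every Lebesgue-null $A$. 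Hence $\PP(S_X(t)\in A)=0$ for a.e. $t$.
\item Self-similarity removes the ``a.e.'': $S_X(t)\stackrel d= t^{1/\alpha}S_X(1)$, so null sets are preserved for every $t>0$.
\end{itemize}

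A simpler alternative I would try first avoids ladder theory. The maximum satisfies $S_X(t)\ge X_t$, and conditional on $\mathcal F$ up to the last-maximum structure one could decompose; but the cleanest route is to cite known results (Bertoin, \emph{Lévy Processes}, Ch.~VI, or Chaumont's results that the supremum of a stable process has a smooth density).

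The main obstacle is this absolute continuity of the stable supremum. I expect to rely on the literature there, via the ladder height potential density, rather than reprove it.
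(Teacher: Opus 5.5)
\textbf{Verdict: genuine gap in the stable case of part (2).} Part (1) is fine, and the Brownian half of (2) agrees with the paper.

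\textbf{Part (1).} Your argument is correct and more self-contained than the paper, which simply cites oscillation of the symmetric stable process from Bertoin (Ch.~VIII). Scaling gives $M:=\sup_t X_t\in\{0,\infty\}$, symmetry gives $\PP(M=\infty)\ge 1/2$, and a $0$--$1$ law finishes the argument. Apply the Kolmogorov $0$--$1$ law to the independent path segments $(X_{k+s}-X_k)_{0\le s\le 1}$, not to the integer increments $X_{k+1}-X_k$ alone.

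\textbf{The gap: ``self-similarity removes the a.e.''} From $\int_0^\infty qe^{-qt}\,\PP(S_X(t)\in A)\,dt=0$ you only get the following: for each Lebesgue-null $A$ there is a $t$-null exceptional set $N_A$. This set depends on $A$, and there are uncountably many null sets. So you never obtain a single time $t_0$ at which $S_X(t_0)$ is absolutely continuous, and scaling needs such a $t_0$ to propagate.

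Worse, for a self-similar process the exponential-time statement carries no information at all. Since $e_q$ is independent of $X$, we have $S_X(e_q)\stackrel d= e_q^{1/\alpha}\,S_X(1)$ with independent factors. Because $S_X(1)>0$ a.s.\ and $e_q^{1/\alpha}$ has a density, $S_X(e_q)$ is absolutely continuous whatever the law of $S_X(1)$ is. For example, if $S_X(1)$ had an atom at $1$, then for $A=\{1\}$ the set of bad times would be countable and would contain $t=1$. Every conclusion of your Wiener--Hopf step would still hold, yet the lemma would fail. So the ladder-height potential density $\propto x^{\alpha/2-1}\,dx$ cannot deliver the fixed-time claim.

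\textbf{What is needed.} You need a genuinely fixed-time input. The paper uses the Doney--Savov result that $S_X(1)$ has a continuous density. It then transports this to every $t>0$ via $S_X(t)\stackrel d= t^{1/\alpha}S_X(1)$, which is legitimate because it starts from one fixed time. Chaumont's fixed-time absolute-continuity theorem for suprema of L\'evy processes would also serve. Your fallback of citing the literature is therefore fine and coincides with the paper's proof. The Wiener--Hopf derivation should be dropped rather than offered as the justification. Your Rogozin argument ruling out an atom at $0$ is correct and worth keeping.
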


\begin{proof}
In the Brownian case, recurrence implies that every positive level is crossed
almost surely; see \cite[Theorem~3.20]{MoertersPeres2010}. The reflection
principle gives an absolutely continuous law for $S_X(t)$; see
\cite[Chapter~III, Proposition~(3.7)]{RevuzYor1999}.

A symmetric strictly $\alpha$-stable process with $1<\alpha<2$ oscillates and is
unbounded above and below almost surely; see \cite[Chapter~VIII, Section~2]{Bertoin1996}.
Hence it crosses every positive level. By self-similarity,
$S_X(t)\stackrel d=t^{1/\alpha}S_X(1)$, and $S_X(1)$ has a continuous density by
\cite[p.~316]{DoneySavov2010}. This proves both assertions.
\end{proof}

\begin{lemma}\label{lem:tau-open-closed}
Under the assumptions of Lemma~\ref{lem:limit-process-basics}, for every $x>0$,
\[
\tau_x^+(X):=\inf\{t\ge0:X_t\ge x\}
=\tau_x^>(X)\qquad\text{almost surely}.
\]
\end{lemma}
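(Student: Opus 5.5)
Since $\{X_t>x\}\subseteq\{X_t\ge x\}$, we always have $\tau_x^+(X)\le\tau_x^>(X)$. The whole task is therefore to exclude, almost surely, the event $\{\tau_x^+(X)<\tau_x^>(X)\}$.

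\emph{Step 1: reduction to a statement about the running supremum.} Suppose $\tau_x^+(X)<\tau_x^>(X)$. We claim that then $S_X(t)=x$ for every rational $t\in(\tau_x^+(X),\tau_x^>(X))$. Indeed, on $[0,\tau_x^>(X))$ we have $X_s\le x$, so $S_X(t)\le x$ there. Right-continuity and the definition of $\tau_x^+$ give $X_{\tau_x^+}\ge x$ in the case $X_{\tau_x^+}$ is attained. Otherwise left limits approach $x$ and $\sup_{s<\tau_x^+}X_s=x$. In either case $S_X(t)\ge x$ for $t\ge\tau_x^+$. Since the interval is nonempty, it contains a rational $t$. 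Hence
\[
\{\tau_x^+(X)<\tau_x^>(X)\}\subseteq\bigcup_{t\in\mathbb{Q}_{>0}}\{S_X(t)=x\}.
\]

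\emph{Step 2: conclusion via absolute continuity.} By Lemma~\ref{lem:limit-process-basics}(2), for each fixed $t>0$ the law of $S_X(t)$ is absolutely continuous, so $\PP(S_X(t)=x)=0$. A countable union of null events is null, and therefore $\tau_x^+(X)=\tau_x^>(X)$ almost surely. Lemma~\ref{lem:limit-process-basics}(1) guarantees that both times are finite, although the argument does not actually need this: if $\tau_x^>=\infty$ while $\tau_x^+<\infty$, the same inclusion still applies.

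The main obstacle is the pathwise claim in Step 1 that $S_X(t)\ge x$ once $t\ge\tau_x^+(X)$. This needs some care with càdlàg paths, because $\tau_x^+$ is an infimum, and the level may be reached either by a jump landing at $x$ or only in the limit. I would handle both cases by noting that $S_X(t)\ge\sup_{s\le t}X_s$, and that some $s_k\downarrow\tau_x^+$ with $X_{s_k}\ge x$ exists whenever $\tau_x^+<\infty$. Right-continuity then gives $X_{\tau_x^+}\ge x$, so the level is in fact attained at $\tau_x^+$ itself.
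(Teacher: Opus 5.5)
Your proof is correct and is essentially the paper's argument. Both rest on the pathwise order $\tau_x^+(X)\le\tau_x^>(X)$, the sandwich $\{S_X(t)<x\}\subseteq\{\tau_x^+(X)>t\}\subseteq\{\tau_x^>(X)>t\}\subseteq\{S_X(t)\le x\}$, absolute continuity of $S_X(t)$, and rational times. The paper first deduces equality in law and then upgrades to almost-sure equality via a rational $q$ with $\tau_x^+(X)\le q<\tau_x^>(X)$; you go straight to $\{\tau_x^+(X)<\tau_x^>(X)\}\subseteq\bigcup_{q\in\mathbb{Q}_{>0}}\{S_X(q)=x\}$, which is slightly more streamlined. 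The ``otherwise'' case in your Step 1 never occurs: as your final paragraph shows, right-continuity forces $X_{\tau_x^+(X)}\ge x$ whenever $\tau_x^+(X)<\infty$.
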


\begin{proof}
For every $t>0$, the pathwise order
$\tau_x^+(X)\le\tau_x^>(X)$ and the right-continuity of the paths give
\[
\{S_X(t)<x\}
\subseteq
\{\tau_x^+(X)>t\}
\subseteq
\{\tau_x^>(X)>t\}
\subseteq
\{S_X(t)\le x\}.
\]
Indeed, if $S_X(t)<x$, then right-continuity at time $t$ implies that
$X_s<x$ on some interval $[t,t+\delta)$, and hence
$\tau_x^+(X)>t$. The last inclusion is immediate from the definition of
$\tau_x^>(X)$. Therefore,
\[
\PP(S_X(t)<x)
\le
\PP(\tau_x^+(X)>t)
\le
\PP(\tau_x^>(X)>t)
\le
\PP(S_X(t)\le x).
\]
By Lemma~\ref{lem:limit-process-basics},
$S_X(t)$ has an absolutely continuous law, and hence
\[
\PP(S_X(t)<x)=\PP(S_X(t)\le x).
\]
Consequently,
\[
\PP(\tau_x^>(X)>t)=\PP(\tau_x^+(X)>t),
\qquad t>0,
\]
so $\tau_x^+(X)$ and $\tau_x^>(X)$ have the same distribution.

We now make explicit why equality in distribution, together with the pathwise
order, implies almost-sure equality. If
$\PP(\tau_x^+(X)<\tau_x^>(X))>0$, then, by density and countability of
the positive rational numbers, there exists a rational number $q>0$ such that
\[
\PP\bigl(\tau_x^+(X)\le q<\tau_x^>(X)\bigr)>0.
\]
Since $\tau_x^+(X)\le\tau_x^>(X)$ almost surely, this would imply
\[
\PP(\tau_x^+(X)\le q)
>
\PP(\tau_x^>(X)\le q),
\]
contradicting equality in distribution. Therefore
\[
\PP\bigl(\tau_x^+(X)<\tau_x^>(X)\bigr)=0,
\]
and the pathwise order yields
\[
\tau_x^+(X)=\tau_x^>(X)
\qquad\text{almost surely}.
\]
\end{proof}
\begin{lemma}
\label{lem:tau-continuity}
Let $X$ satisfy Lemma~\ref{lem:limit-process-basics}, and let
$Y_n\Rightarrow X$ in $D([0,\infty),\mathbb R)$ with the Skorokhod $J_1$
topology. Assume that $Y_n(0)=0$ almost surely for every $n$. Then, for every
$x>0$,
\[
\tau_x^+(Y_n)\Rightarrow\tau_x^+(X).
\]
\end{lemma}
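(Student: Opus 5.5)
The plan is to use the continuous mapping theorem in its extended form. Let $C$ be the set of paths $f\in D([0,\infty),\mathbb R)$ at which the map $f\mapsto\tau_x^+(f)$ is continuous for the $J_1$ topology. We show that $\PP(X\in C)=1$. The weak convergence $Y_n\Rightarrow X$ then gives $\tau_x^+(Y_n)\Rightarrow\tau_x^+(X)$ by \cite[Theorem~2.7]{Billingsley1999}, applied to the measurable map $\tau_x^+$ into $[0,\infty]$. An equivalent route is Skorokhod representation: we realize $Y_n\to X$ almost surely in $J_1$ and prove pathwise convergence on a full-measure event. The hypothesis $Y_n(0)=0$ ensures that $\tau_x^+(Y_n)>0$ is not forced trivially. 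It also ensures that the limit path starts below $x$, since $X_0=0<x$.

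The pathwise claim is this. Let $f_n\to f$ in $J_1$, and suppose $f$ satisfies $f(0)<x$, $\tau_x^+(f)=\tau_x^>(f)<\infty$, and $f(t)\ne x$ at the relevant times. Then $\tau_x^+(f_n)\to\tau_x^+(f)$. We prove the two semicontinuity halves separately.

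For the upper bound, set $T:=\tau_x^>(f)$. For every $\varepsilon>0$ there is $s\in(T,T+\varepsilon)$ with $f(s)>x$. Since $f$ is right-continuous, $f>x$ on a small interval after $s$, so we may take $s$ to be a continuity point of $f$. $J_1$ convergence implies $f_n(s)\to f(s)$ at continuity points, hence $f_n(s)>x$ for large $n$. This gives $\limsup_n\tau_x^+(f_n)\le T+\varepsilon$.

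For the lower bound, set $T_0:=\tau_x^+(f)$ and fix $t<T_0$, chosen as a continuity point with $t+\varepsilon<T_0$ small. Then $\sup_{s\le t'}f(s)<x$ for some $t'$ slightly larger than $t$ and still below $T_0$. Strict inequality holds because right-continuity, together with the fact that $x$ is not reached before $T_0$, keeps the supremum over a compact interval strictly below $x$; left limits could equal $x$, and handling this is part of the main obstacle below. The supremum functional on $[0,t']$ is $J_1$-continuous at paths continuous at $t'$, so $\sup_{s\le t}f_n(s)<x$ for large $n$, and therefore $\liminf_n\tau_x^+(f_n)\ge t$.

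The main obstacle is the lower bound. One must rule out a left limit $f(T_0-)=x$ without $f$ reaching $x$ earlier, and more generally ensure $\sup_{[0,t]}f<x$ for all $t<T_0$. I would handle this with Lemma~\ref{lem:tau-open-closed} and Lemma~\ref{lem:limit-process-basics}. For fixed $t$, the law of $S_X(t)$ is absolutely continuous, so $\PP(S_X(t)=x)=0$ for all rational $t$ simultaneously. Hence almost surely $S_X(q)\ne x$ for every rational $q$, and $\tau^+_x(X)=\tau^>_x(X)<\infty$. On this event, every rational $q<T_0$ has $S_X(q)<x$. Indeed, $S_X(q)\le x$ by the definition of $T_0$ (if $S_X(q)\ge x$ via a left limit only, the value is at most $x$), and $S_X(q)\ne x$. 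Taking $t$ rational, and avoiding the countably many fixed jump times, which have probability zero for a Lévy process, yields the lower bound. Combining both halves and letting $\varepsilon\downarrow0$ gives almost sure convergence under the representation, hence convergence in distribution.
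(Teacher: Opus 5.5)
Your proof is correct, but it is organized differently from the paper's.

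\textbf{The paper's route.} The paper never analyses continuity of $\tau_x^+$ on path space. For each fixed $t>0$ it does the following:
\begin{enumerate}
\item It restricts to $D([0,t],\mathbb R)$, which is allowed because $X$ has no fixed jump at $t$, and applies the continuous mapping theorem to the supremum functional.
\item It uses the deterministic sandwich $\{\sup_{[0,t]}Y_n<x\}\subseteq\{\tau_x^+(Y_n)>t\}\subseteq\{\sup_{[0,t]}Y_n\le x\}$ together with $\PP(S_X(t)=x)=0$. This gives $\PP(\tau_x^+(Y_n)>t)\to\PP(\tau_x^>(X)>t)$.
\end{enumerate}
It then converts convergence of these tail functions into weak convergence in $[0,\infty]$ via the homeomorphism $\phi$. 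Finally it invokes Lemma~\ref{lem:tau-open-closed}.

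\textbf{Your route.} You prove the stronger pathwise statement that $X$ lies almost surely in the continuity set of $\tau_x^+$, via separate semicontinuity arguments.
\begin{itemize}
\item The upper half uses only $\tau_x^>(X)<\infty$ and pointwise $J_1$ convergence at continuity points of the limit.
\item The lower half needs the countably many null events $\{\Delta X(q)\neq0\}$ and $\{S_X(q)=x\}$, $q\in\mathbb Q_{>0}$, to be avoided simultaneously.
\item The identity $\tau_x^+(X)=\tau_x^>(X)$ closes the gap between the two halves.
\end{itemize}

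\textbf{What each buys.} The paper's version is more economical. It works with one fixed $t$ at a time, and a single sandwich handles both directions at once; it needs neither the rational-time union nor the continuity-point convergence fact. Yours gives joint convergence statements for free, e.g.\ $(Y_n,\tau_x^+(Y_n))\Rightarrow(X,\tau_x^+(X))$, or convergence for several levels $x$ simultaneously. It also makes the passage to $[0,\infty]$ automatic.

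\textbf{One expository point.} In your lower-bound paragraph you claim that right-continuity alone keeps $\sup_{[0,t']}f$ strictly below $x$. That is false as stated: a path can approach $x$ from below and then jump down, and you flag this yourself in the same sentence. Your final paragraph replaces it with the correct almost-sure argument over rational times, so the proof itself is unaffected.
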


\begin{proof}
Fix $x>0$. We first establish convergence of the terminal suprema on an
arbitrary deterministic interval $[0,t]$, where $t>0$.

Both possible limit processes are L\'evy processes. For every fixed $t>0$, a
L\'evy process has no jump at time $t$ almost surely; see
\cite[Lemma~2.3.2]{Applebaum2009}. Thus
\[
\PP\bigl(\Delta X(t)\neq0\bigr)=0,
\qquad
\Delta X(t):=X_t-X_{t-}.
\]
In the Brownian case this also follows directly from continuity of the sample
paths. By the restriction criterion for weak convergence in
$D([0,\infty),\mathbb R)$, see
\cite[Theorem~16.7]{Billingsley1999}, the convergence
$Y_n\Rightarrow X$ therefore implies
\[
Y_n|_{[0,t]}\Rightarrow X|_{[0,t]}
\qquad\text{in }D([0,t],\mathbb R).
\]

Define the terminal-supremum functional
\[
\Phi_t:D([0,t],\mathbb R)\to\mathbb R,
\qquad
\Phi_t(f):=\sup_{0\le s\le t}f(s).
\]
This functional is continuous for the $J_1$ topology. Indeed, if
$f_n\to f$ in $D([0,t],\mathbb R)$ and $\lambda_n\in\Lambda_t$ are time
changes such that
\[
\sup_{0\le s\le t}|f_n(\lambda_n(s))-f(s)|\longrightarrow0,
\]
then $\lambda_n([0,t])=[0,t]$, and hence
\[
\begin{aligned}
\left|
\sup_{0\le s\le t}f_n(s)-\sup_{0\le s\le t}f(s)
\right|
&=
\left|
\sup_{0\le s\le t}f_n(\lambda_n(s))-
\sup_{0\le s\le t}f(s)
\right|\\
&\le
\sup_{0\le s\le t}|f_n(\lambda_n(s))-f(s)|
\longrightarrow0.
\end{aligned}
\]
The continuous mapping theorem, see
\cite[Theorem~2.7]{Billingsley1999}, now gives
\[
\sup_{0\le s\le t}Y_n(s)
\Rightarrow
S_X(t):=\sup_{0\le s\le t}X_s.
\]

For a general c\`adl\`ag path, the supremum on $[0,t]$ need not be attained.
For this reason one must use the deterministic inclusions
\[
\left\{\sup_{0\le s\le t}Y_n(s)<x\right\}
\subseteq
\{\tau_x^+(Y_n)>t\}
\subseteq
\left\{\sup_{0\le s\le t}Y_n(s)\le x\right\}.
\]
The first inclusion holds because, if
\[
\sup_{0\le s\le t}Y_n(s)<x,
\]
then \(Y_n(t)<x\). By right-continuity at time \(t\), there exists
\(\delta>0\) such that \(Y_n(s)<x\) for every
\(s\in[t,t+\delta)\), and hence \(\tau_x^+(Y_n)>t\).
The second inclusion holds because \(\tau_x^+(Y_n)>t\) implies that
\(Y_n(s)<x\) for every \(s\le t\), and therefore the supremum is at
most \(x\). 

By Lemma~\ref{lem:limit-process-basics}, $S_X(t)$ has an absolutely continuous
law, so $\PP(S_X(t)=x)=0$. Hence the preceding weak convergence and the
portmanteau theorem yield
\[
\PP\left(\sup_{0\le s\le t}Y_n(s)<x\right)
\longrightarrow
\PP(S_X(t)<x)
\]
and
\[
\PP\left(\sup_{0\le s\le t}Y_n(s)\le x\right)
\longrightarrow
\PP(S_X(t)\le x).
\]
The two limits coincide, and the squeeze theorem therefore gives
\[
\PP(\tau_x^+(Y_n)>t)
\longrightarrow
\PP(S_X(t)\le x),
\qquad t>0.
\]
Moreover, the probability squeeze established in the proof of
Lemma~\ref{lem:tau-open-closed} yields
\[
\PP(S_X(t)\le x)
=
\PP(\tau_x^>(X)>t),
\qquad t>0.
\]
Consequently,
\[
\PP(\tau_x^+(Y_n)>t)
\longrightarrow
\PP(\tau_x^>(X)>t),
\qquad t>0.
\]
We regard the first-passage times as random elements of
\(E:=[0,\infty]\), endowed with the one-point compactification topology.
Equivalently, let
\[
\phi:E\longrightarrow[0,1],\qquad
\phi(r)=\frac{r}{1+r}\ \text{for }r<\infty,\qquad
\phi(\infty)=1,
\]
and equip \(E\) with the metric
\(d_E(r,s):=|\phi(r)-\phi(s)|\). Then \(\phi\) is a homeomorphism from
\(E\) onto \([0,1]\).

By Lemma~\ref{lem:limit-process-basics} and right-continuity of the paths
of \(X\) at zero,
\[
\tau_x^>(X)\in(0,\infty)\qquad\text{almost surely}.
\]
Likewise, \(Y_n(0)=0<x\) and the paths of \(Y_n\) are right-continuous at
zero, so \(\tau_x^+(Y_n)>0\) almost surely for every \(n\).

For \(u\in(0,1)\), setting \(t=u/(1-u)\), the preceding tail convergence
gives
\[
\begin{aligned}
\PP\!\left(\phi\bigl(\tau_x^+(Y_n)\bigr)\le u\right)
&=1-\PP\!\left(\tau_x^+(Y_n)>\frac{u}{1-u}\right)\\
&\longrightarrow
1-\PP\!\left(\tau_x^>(X)>\frac{u}{1-u}\right)\\
&=\PP\!\left(\phi\bigl(\tau_x^>(X)\bigr)\le u\right).
\end{aligned}
\]
At \(u=0\), both distribution functions are equal to zero, whereas at
\(u=1\) both are equal to one; outside \([0,1]\), they are trivially
equal to zero or one. Hence
\[
\phi\bigl(\tau_x^+(Y_n)\bigr)
\Rightarrow
\phi\bigl(\tau_x^>(X)\bigr)
\qquad\text{in }[0,1].
\]
Since \(\phi^{-1}:[0,1]\to E\) is continuous, the continuous mapping
theorem \cite[Theorem~2.7]{Billingsley1999} yields
\[
\tau_x^+(Y_n)\Rightarrow\tau_x^>(X)
\qquad\text{in }[0,\infty].
\]
Finally, Lemma~\ref{lem:tau-open-closed} gives
\(\tau_x^>(X)=\tau_x^+(X)\) almost surely, and therefore
\[
\tau_x^+(Y_n)\Rightarrow\tau_x^+(X)
\qquad\text{in }[0,\infty].
\]

In particular, for every \(y>0\), the function
\[
r\longmapsto e^{-yr},\qquad r\in[0,\infty],
\]
with the convention \(e^{-y\infty}:=0\), is bounded and continuous on
\([0,\infty]\). Consequently, the preceding weak convergence implies
\[
\mathbb{E}\!\left[e^{-y\tau_x^+(Y_n)}\right]
\longrightarrow
\mathbb{E}\!\left[e^{-y\tau_x^+(X)}\right].
\]
\end{proof}

\begin{proposition}
\label{prop:tau-scaling} The following items hold:

\begin{enumerate}
  \item[(a)] Let \((\xi_j)_{j\ge1}\) be an i.i.d.\ sequence of symmetric, integer-valued random
variables such that
\[
\PP(|\xi_1|>x)\sim c_\xi x^{-\alpha},
\qquad x\to\infty,
\]
for some \(\alpha\in(1,2)\) and \(c_\xi>0\). Define \(Y_n(t):=\frac{S_{\lfloor n^\alpha t\rfloor}}{n},
\, t\ge0\). Then
  \[
  \frac{\tau_n^+}{n^\alpha}
  =\tau_1^+(Y_n)
  \Rightarrow \tau_1^+(X_\xi)
  \qquad\text{as }n\to\infty,
  \]
  where \(X_\xi\) is the symmetric strictly \(\alpha\)-stable Lévy process on
  \(\mathbb{R}\) with Lévy measure \(\Lambda_\xi\) given by \eqref{Levygene}.

  \item[(b)] Let $(\xi_j)_{j\geq 1}$ be an i.i.d.\ sequence of real-valued random variables such that
\[
\E[\xi_1]=0,
\qquad
\E[\xi_1^2]=\sigma^2\in(0,\infty).
\]
 Define \(Y_n(t):=\frac{S_{\lfloor n^2 t\rfloor}}{n},
\, t\ge0\). Then
  \[
  \frac{\tau_n^+}{n^2}
  =\tau_1^+(Y_n)
  \Rightarrow \tau_1^+(\sigma B)
  \qquad\text{as }n\to\infty,
  \]
  where \(B\) is a standard Brownian motion.
\end{enumerate}
\end{proposition}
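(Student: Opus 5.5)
The plan is to reduce both items to Lemma~\ref{lem:tau-continuity}, applied at level $x=1$ to the rescaled walk $Y_n$. Two things are needed: the identity $\tau_n^+/n^r=\tau_1^+(Y_n)$, and the functional convergence $Y_n\Rightarrow X_\xi$ (respectively $Y_n\Rightarrow\sigma B$) in $D([0,\infty),\R)$. Here $r=\alpha$ in (a) and $r=2$ in (b).

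First I verify the identity, which holds pathwise. The path $t\mapsto S_{\lfloor n^r t\rfloor}/n$ is a step function. It equals $S_k/n$ on $[k/n^r,(k+1)/n^r)$, so
\[
\{t\ge0: Y_n(t)\ge1\}=\bigcup_{k:\,S_k\ge n}\,[k n^{-r},(k+1)n^{-r}).
\]
Its infimum is therefore $\tau_n^+/n^r$, and both sides equal $\infty$ when no such $k$ exists. Also $Y_n(0)=S_0/n=0$, so the hypothesis $Y_n(0)=0$ of Lemma~\ref{lem:tau-continuity} holds.

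Next I establish the functional convergence along the integer subsequence $m=n^r$, or $m=\lfloor n^\alpha\rfloor$ when $n^\alpha$ is not an integer; this non-integer time scale is where I expect the only real friction.

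In regime (b), $Y_n(t)=S_{\lfloor n^2t\rfloor}/n=B_{n^2}(t)$ with $B_m$ as in Remark~\ref{ConvRe}. Since $n^2\to\infty$ along integers, Remark~\ref{ConvRe} gives $Y_n\Rightarrow\sigma B$ directly.

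In regime (a), the convergence in Lemma~\ref{Convgsl} and Remark~\ref{ConvRe} is stated for integer $n$, with $Y_m(t)=S_{\lfloor mt\rfloor}/m^{1/\alpha}$. I would obtain the real-index version in one of two ways.
\begin{itemize}
\item[(1)] Observe that the proof of Lemma~\ref{Convgsl} uses only $b_n\to\infty$ with $n\PP(|\xi_1|>b_n)\to1$, and the cited theorem of Tyran-Kami\'nska applies verbatim to a real time parameter $s=n^\alpha$. This gives $S_{\lfloor st\rfloor}/s^{1/\alpha}\Rightarrow X_\xi$ as $s\to\infty$.
\item[(2)] Alternatively, write $Y_n(t)=(m/n^\alpha)^{1/\alpha}\cdot S_{\lfloor m\cdot (n^\alpha/m)t\rfloor}/m^{1/\alpha}$ with $m=\lfloor n^\alpha\rfloor$. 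The time change $t\mapsto (n^\alpha/m)t$ tends to the identity uniformly on compacts, so it is a $J_1$-small perturbation. The factor $(m/n^\alpha)^{1/\alpha}\to1$ is handled by the scaling continuity in Remark~\ref{ConvRe}, combined with a converging-together (Slutsky-type) argument in $D$.
\end{itemize}
I would present route (2), since it only uses stated results. Composition with a linear time change $\lambda_n(t)=c_nt$, $c_n\to1$, satisfies $d_T(f\circ\lambda_n,f)\le \sup_{t\le T}|\lambda_n(t)-t|$ after restriction, up to the endpoint bookkeeping in $d_\infty$. This endpoint bookkeeping is the step I expect to be the main technical nuisance, although it is conceptually routine.

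Finally, I apply Lemma~\ref{lem:tau-continuity}. The limit $X_\xi$ (or $\sigma B$) satisfies the hypotheses of Lemma~\ref{lem:limit-process-basics}, since $X_\xi=c_\xi^{1/\alpha}X$ has Lévy measure \eqref{Levygene}. Hence
\[
\tau_1^+(Y_n)\Rightarrow \tau_1^+(X_\xi)\quad\text{or}\quad\tau_1^+(\sigma B).
\]
Combined with the identity $\tau_n^+/n^r=\tau_1^+(Y_n)$, this gives both (a) and (b). The convergence holds in $[0,\infty]$, and the limit is almost surely finite by Lemma~\ref{lem:limit-process-basics}, so it is convergence in distribution of real random variables.
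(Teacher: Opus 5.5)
Your proposal is correct and follows essentially the same route as the paper. Both arguments use the pathwise identity $\tau_n^+/n^r=\tau_1^+(Y_n)$, the integer subsequence $n^2$ in regime (b), and the decomposition $Y_n(t)=a_n\widetilde Y_n(\beta_n t)$ in regime (a) with $m_n=\lfloor n^\alpha\rfloor$, $a_n=m_n^{1/\alpha}/n$, $\beta_n=n^\alpha/m_n$, followed by Lemma~\ref{lem:tau-continuity}.

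The only difference is how the time change is handled. The paper disposes of your ``endpoint bookkeeping'' by proving deterministically that $x_n\to x$ implies $a_nx_n(\beta_n\cdot)\to x$, using Whitt's composition and multiplication theorems, and then invoking the extended continuous mapping theorem. You should adopt that step, because your metric estimate $d_T(f\circ\lambda_n,f)\le\sup_{t\le T}|\lambda_n(t)-t|$ is not true as stated: $\lambda_n$ does not fix $T$, so a jump of $f$ in $(T,c_nT]$ shows up at the endpoint.
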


\begin{proof}

For a function \(h\in D([0,\infty),\mathbb{R})\), let
\[
\Disc(h):=\{t>0:\ h(t)\neq h(t-)\}
\]
denote the set of discontinuity points of \(h\). Also, let
\[
C_Q([0,\infty),[0,\infty))
:=
\{g:[0,\infty)\to[0,\infty): g \text{ is continuous and strictly increasing}\}.
\]

In regime \textup{(a)}, let
\[
m_n:=\lfloor n^\alpha\rfloor,
\qquad
a_n:=\frac{m_n^{1/\alpha}}{n},
\qquad
\beta_n:=\frac{n^\alpha}{m_n},
\]
and define
\[
\widetilde Y_n(t):=\frac{S_{\lfloor m_n t\rfloor}}{m_n^{1/\alpha}},
\qquad
Y_n(t):=\frac{S_{\lfloor n^\alpha t\rfloor}}{n},
\qquad t\ge0.
\]
By Lemma~\ref{Convgsl} and Remark~\ref{ConvRe},
\[
\widetilde Y_n \Rightarrow X_\xi
\qquad\text{in }D([0,\infty),\mathbb{R}).
\]

For each \(n\ge1\), define
\[
G_n:D([0,\infty),\mathbb{R})\to D([0,\infty),\mathbb{R})
\]
by
\[
G_n(x)(t):=a_n\,x(\beta_n t),
\qquad t\ge0.
\]
Then, for every \(t\ge0\),
\[
G_n(\widetilde Y_n)(t)
=
a_n\,\widetilde Y_n(\beta_n t)
=
\frac{m_n^{1/\alpha}}{n}\,
\frac{S_{\lfloor m_n\beta_n t\rfloor}}{m_n^{1/\alpha}}
=
\frac{S_{\lfloor n^\alpha t\rfloor}}{n}
=
Y_n(t),
\]
because \(m_n\beta_n=n^\alpha\). We now prove that, whenever \(x_n\to x\) in \(D([0,\infty),\mathbb{R})\),
\[
G_n(x_n)\to x
\qquad\text{in }D([0,\infty),\mathbb{R}).
\]

Define
\[
g_n(t):=\beta_n t,
\qquad
e(t):=t,
\qquad
c_n(t):=a_n,
\qquad
\mathbf 1(t):=1,
\qquad t\ge0.
\]
Since \(\beta_n\to1\), for every \(T>0\),
\[
\sup_{0\le t\le T}|g_n(t)-e(t)|=|\beta_n-1|\,T\to0.
\]
Hence \(g_n\to e\) uniformly on compact intervals. Since each \(g_n\) and \(e\)
is continuous, and since the \(J_1\) topology on continuous functions coincides
with the topology of uniform convergence on compact intervals, it follows that
\[
g_n\to e
\qquad\text{in }D([0,\infty),[0,\infty));
\]
see \cite[p.~75]{Whitt1980}. Moreover,
\[
g_n,e\in C_Q([0,\infty),[0,\infty)).
\]
Therefore, by \cite[Theorem~3.1]{Whitt1980},
\[
x_n\circ g_n \to x\circ e=x
\qquad\text{in }D([0,\infty),\mathbb{R}),
\]
because the interval \([0,\infty)\) is open on the right.

Next, since \(a_n\to1\), for every \(T>0\),
\[
\sup_{0\le t\le T}|c_n(t)-\mathbf 1(t)|=|a_n-1|\to0.
\]
Hence \(c_n\to\mathbf 1\) uniformly on compact intervals, and thus
\[
c_n\to\mathbf 1
\qquad\text{in }D([0,\infty),\mathbb{R}).
\]
Since \(\mathbf 1\) is continuous,
\[
\Disc(\mathbf 1)=\varnothing.
\]
Therefore
\[
\Disc(x)\cap \Disc(\mathbf 1)=\varnothing.
\]
Hence, by \cite[Theorem~4.2]{Whitt1980},
\[
c_n(x_n\circ g_n)\to \mathbf 1\cdot x=x
\qquad\text{in }D([0,\infty),\mathbb{R}).
\]
But
\[
c_n(x_n\circ g_n)=G_n(x_n),
\]
so we have shown that
\[
G_n(x_n)\to x
\qquad\text{whenever }x_n\to x.
\]

We may now apply form~\textup{(ii)} of the continuous mapping theorem,
stated in \cite[p.~68]{Whitt1980}, to
\[
X_n:=\widetilde Y_n,\qquad X:=X_\xi,\qquad f_n:=G_n,\qquad f(x):=x.
\]
Since \(X_\xi\in D([0,\infty),\mathbb{R})\) almost surely, it follows that
\[
G_n(\widetilde Y_n)\Rightarrow X_\xi
\qquad\text{in }D([0,\infty),\mathbb{R}).
\]

Therefore, in regime \textup{(a)}, Lemma~\ref{lem:tau-continuity} with
\(x=1\) gives
\[
  \tau_1^+(Y_n)\Rightarrow \tau_1^+(X_\xi).
\]
In regime \textup{(b)}, Lemma~\ref{thm:Donsker-Gaussian} and
Remark~\ref{ConvRe} yield
\[
Y_n\Rightarrow \sigma B
\qquad\text{in }D([0,\infty),\mathbb{R}),
\]
and another application of Lemma~\ref{lem:tau-continuity} gives
\[
  \tau_1^+(Y_n)\Rightarrow \tau_1^+(\sigma B).
\]

To unify the notation in the two regimes, we write
\[
r=
\begin{cases}
\alpha, & \text{in the stable regime }1<\alpha<2,\\
2, & \text{in the finite-variance regime}.
\end{cases}
\]

Therefore, it remains only to verify that, for each \(n\ge1\),
\[
\tau_1^+(Y_n)=\frac{\tau_n^+}{n^r}.
\]

Indeed, if $\tau_n^+=\infty$,
then $S_k<n$ for all $k\ge0$, so $Y_n(t)<1$ for all $t\ge0$, and hence
$\tau_1^+(Y_n)=\infty$. If $\tau_n^+<\infty$ and $k:=\tau_n^+$, then
$S_k\ge n$ while $S_j<n$ for every $0\le j<k$. Therefore, for
$t<k/n^r$,
\[
\lfloor n^r t\rfloor\le k-1,
\]
and so $S_{\lfloor n^r t\rfloor}<n$, whereas at $t=k/n^r$ one has
\[
\lfloor n^r t\rfloor=k,
\]
and thus $S_{\lfloor n^r t\rfloor}\ge n$. This proves the identity.

Combining this identity with the two convergence statements above yields the
stated limits in \textup{(a)} and \textup{(b)}.
\end{proof}

\subsection{Properties of \texorpdfstring{\(\tau_x^+\)}{the first-passage time}}

In this subsection we study basic properties of the first-passage time
\(\tau_x^+\) for a Brownian motion on \(\mathbb{R}\) with variance parameter
\(\sigma^2>0\), and for a symmetric strictly \(\alpha\)-stable Lévy process on
\(\mathbb{R}\), with \(1<\alpha<2\), whose Lévy measure is given by
\eqref{Levygene}.

\begin{lemma}
\label{lem:small-time-limit}
Let \(X=(X_t)_{t\ge0}\) be either
\begin{itemize}
    \item a Brownian motion on \(\mathbb{R}\) with variance parameter \(\sigma^2>0\), or
    \item a symmetric strictly \(\alpha\)-stable Lévy process on \(\mathbb{R}\), with
    \(1<\alpha<2\), whose Lévy measure is given by \eqref{Levygene}.
\end{itemize}
Then there exist constants \(t_1\in(0,1)\) and \(C>0\) such that
\begin{equation}\label{eq:limit-small-linear}
  \PP(\tau_1^+(X)\le t)\le Ct
  \qquad\text{for all }t\in(0,t_1].
\end{equation}
Moreover, the distribution function
\[
  F(t):=\PP(\tau_1^+(X)\le t),\qquad t>0,
\]
is continuous on \((0,\infty)\).
\end{lemma}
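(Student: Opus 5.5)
The plan is to reduce both claims to statements about the running supremum $S_X(t)=\sup_{0\le s\le t}X_s$. The key relation is the one established in the proof of Lemma~\ref{lem:tau-open-closed}:
\[
\PP(\tau_1^+(X)>t)=\PP(S_X(t)\le 1)=\PP(S_X(t)<1),\qquad t>0.
\]
Hence $F(t)=\PP(S_X(t)\ge 1)$.

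\emph{Linear bound.} For the linear bound I would use a L\'evy-type maximal inequality for symmetric processes, $\PP(S_X(t)\ge x)\le 2\PP(X_t\ge x)$. This holds for any symmetric L\'evy process: apply the reflection argument to a symmetric random walk obtained by discretising time dyadically, and pass to the limit using right-continuity of the paths. It then remains to bound $\PP(X_t\ge1)$.

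In the Brownian case this gives $F(t)=2\PP(\sigma B_t\ge1)\le 2e^{-1/(2\sigma^2t)}$, which is $\le Ct$ for $t\le t_1$.

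In the stable case, self-similarity gives $\PP(X_t\ge1)=\PP(X_1\ge t^{-1/\alpha})$. The classical tail asymptotic for stable laws, $\PP(X_1>x)\sim \frac12 c_\xi x^{-\alpha}$ as $x\to\infty$ (from the Lévy measure \eqref{Levygene}), then yields $\PP(X_t\ge1)\le C' t$ for $t$ small.

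As a backup that avoids the stable tail asymptotic, I would split into big and small jumps. The probability that some jump of size $>1/2$ occurs by time $t$ is $\le \Lambda_\xi(\{|x|>1/2\})\,t$. The compensated small-jump martingale is centered by symmetry and has variance $t\int_{|x|\le 1/2}x^2\,\Lambda_\xi(dx)$. Doob's inequality bounds its probability of exceeding $1/2$ by $4t\int_{|x|\le1/2} x^2\,\Lambda_\xi(dx)$. Either route gives \eqref{eq:limit-small-linear}.

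\emph{Continuity of $F$.} Since $F$ is a distribution function it is right-continuous, so it suffices to show $\PP(\tau_1^+(X)=t)=0$ for every $t>0$. I would argue via the function $t\mapsto \PP(S_X(t)\ge 1)$ and self-similarity: with $r=2$ (Brownian) or $r=\alpha$, we have $S_X(t)\stackrel d= t^{1/r}S_X(1)$, so
\[
F(t)=\PP\bigl(S_X(1)\ge t^{-1/r}\bigr).
\]
By Lemma~\ref{lem:limit-process-basics}, $S_X(1)$ has an absolutely continuous law, so $x\mapsto\PP(S_X(1)\ge x)$ is continuous. Composing with the continuous map $t\mapsto t^{-1/r}$ on $(0,\infty)$ gives continuity of $F$ on $(0,\infty)$. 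This step is essentially immediate.

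\emph{Main obstacle.} The main obstacle is justifying the maximal inequality (or the small/big jump split) rigorously for the stable process, including the fact that $\tau_1^+$ defined with $\ge$ matches $S_X(t)\ge1$. The second point is handled by Lemma~\ref{lem:tau-open-closed}. For the first, I would present the jump-decomposition argument, since it uses only the Lévy–Itô decomposition with the explicit Lévy measure \eqref{Levygene} together with Doob's inequality. It gives explicit $C$ and $t_1$, and it does not depend on the constants in the stable tail asymptotics.
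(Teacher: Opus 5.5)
Your argument is correct. For the Brownian case and for the continuity of $F$ it is essentially the paper's argument. The paper uses the reflection principle plus a Gaussian tail bound. For continuity it uses $F(t)=\PP(S_X(1)>t^{-1/\alpha})$ and the continuous law of $S_X(1)$ in the stable case, and reads continuity off $F(t)=2\PP(X_t\ge1)$ in the Brownian case.

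The genuine difference is the linear bound in the stable case. The paper imports the supremum tail asymptotic $\PP(S_X(1)>x)\sim Kx^{-\alpha}$ from Doney--Savov and so obtains the sharper statement $F(t)\sim Kt$ as $t\downarrow0$. Since that reference is already cited for the density of $S_X(1)$ in Lemma~\ref{lem:limit-process-basics}, this costs the paper nothing extra.

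Your big/small-jump route gives only an upper bound, but that is all Lemma~\ref{lem:negative-moments-stable} needs. It is self-contained: the L\'evy--It\^o decomposition (zero drift by symmetry) plus Doob's $L^2$ inequality. It yields the explicit bound $F(t)\le t\bigl(\Lambda_\xi(\{|x|>1/2\})+4\int_{|x|\le1/2}x^2\,\Lambda_\xi(dx)\bigr)$ for every $t>0$, not only for small $t$. It is also the continuous-time analogue of the truncation argument the paper itself uses for the walk in Lemma~\ref{lem:fast-Tn}.

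This bound needs only the trivial inclusion $\{\tau_1^+(X)\le t\}\subseteq\{S_X(t)\ge1\}$, since right-continuity gives $X_{\tau_1^+(X)}\ge1$. So on your route the absolute continuity of $S_X(1)$ is needed only for the continuity claim.

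Your primary route is also valid: L\'evy's maximal inequality via dyadic discretisation, plus the classical stable tail $\PP(X_1>x)\sim\frac12c_\xi x^{-\alpha}$. In the limit step, work with strict inequalities, $\{\max_{k\le2^m}X_{kt2^{-m}}>x\}\uparrow\{S_X(t)>x\}$, which holds by right-continuity, and then let $x\uparrow1$.
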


\begin{proof}
We treat the two cases separately.

\emph{Stable case.}
Let \(S_X(t):=\sup_{0\le s\le t}X_s\), \(t\ge0\). Since \(X\) is
self-similar,
\[
  S_X(t)\stackrel d= t^{1/\alpha}S_X(1),
  \qquad t>0.
\]
By Lemma~\ref{lem:tau-open-closed},
\[
  \tau_1^+(X)=\tau_1^>(X)
  \qquad\text{almost surely}.
\]
Moreover, Lemma~\ref{lem:limit-process-basics} gives
\[
  \PP(S_X(t)=1)=0,
  \qquad t>0.
\]
Hence the probability squeeze established in the proof of
Lemma~\ref{lem:tau-open-closed} yields
\[
  \PP(\tau_1^>(X)>t)
  =
  \PP(S_X(t)\le1),
  \qquad t>0.
\]
Taking complements and using self-similarity, for every \(t>0\),
\[
  \PP(\tau_1^+(X)\le t)
  =
  \PP(\tau_1^>(X)\le t)
  =
  \PP(S_X(t)>1)
  =
  \PP(S_X(1)>t^{-1/\alpha}).
\]
By \cite[p.~316 and equation~(3)]{DoneySavov2010}, the random variable \(S_X(1)\)
has a continuous density and there exists a constant \(K>0\) such that
\begin{equation}\label{eq:Sup-tail-clean}
  \PP(S_X(1)>x)\sim Kx^{-\alpha},
  \qquad x\to\infty.
\end{equation}
Hence, substituting \(x=t^{-1/\alpha}\) into
\eqref{eq:Sup-tail-clean}, we obtain
\[
  \PP(\tau_1^+(X)\le t)
  \sim Kt,
  \qquad t\downarrow0.
\]
In particular, there exist \(t_1\in(0,1)\) and \(C>0\) such that
\[
  \PP(\tau_1^+(X)\le t)\le Ct
  \qquad\text{for all }t\in(0,t_1].
\]
Moreover, since \(S_X(1)\) has a continuous distribution and
\(t\mapsto t^{-1/\alpha}\) is continuous on \((0,\infty)\), the map
\(t\mapsto \PP(\tau_1^+(X)\le t)\) is continuous on \((0,\infty)\).

\medskip

\emph{Brownian case.}
Write \(X_t=\sigma B_t\), where \(B\) is a standard Brownian motion.
Since Brownian paths are continuous, Lemma~\ref{lem:tau-open-closed}
again yields
\[
  \tau_1^+(X)=\tau_1^>(X)
  \qquad\text{almost surely}.
\]
By the reflection principle (see, e.g., \cite[Chapter~III, Proposition~(3.7)]{RevuzYor1999}), for every \(t>0\),
\[
  \PP(\tau_1^+(X)\le t)
  =\PP\!\Big(\sup_{0\le s\le t}X_s\ge 1\Big)
  =2\,\PP(X_t\ge 1).
\]
Since $X_t$ is Gaussian with mean $0$ and variance $\sigma^2t$, standard Gaussian tail bounds
(see, e.g., \cite[Theorem~1.2.6]{Durrett2019}) imply that
\[
  \PP(X_t\ge 1)
  \le \frac{\sigma\sqrt t}{\sqrt{2\pi}}
     \exp\!\Big(-\frac{1}{2\sigma^2 t}\Big),
  \qquad t>0.
\]
Consequently, there exist constants \(C_1,c>0\) such that
\[
  \PP(\tau_1^+(X)\le t)
  \le C_1\sqrt t\,e^{-c/t},
  \qquad t>0.
\]
Since
\[
  t^{-1/2}e^{-c/t}\longrightarrow 0
  \qquad\text{as }t\downarrow0,
\]
there exist \(t_1\in(0,1)\) and \(C>0\) such that
\[
  \PP(\tau_1^+(X)\le t)\le Ct
  \qquad\text{for all }t\in(0,t_1].
\]
Finally, \(t\mapsto \PP(\tau_1^+(X)\le t)\) is continuous on
\((0,\infty)\), since \(t\mapsto \PP(X_t\ge1)\) is continuous.
\end{proof}

To treat the stable and finite-variance regimes simultaneously in the next two
lemmas, set
\[
r=
\begin{cases}
\alpha, & \text{in the stable case }1<\alpha<2,\\
2, & \text{in the finite-variance case},
\end{cases}
\qquad
Y_n(t):=\frac{S_{\lfloor n^r t\rfloor}}{n},\quad t\ge0.
\]
By Proposition~\ref{prop:tau-scaling}, in the corresponding regime,
\[
\tau_1^+(Y_n)=\frac{\tau_n^+}{n^r},
\qquad n\in\mathbb{N}.
\]

\begin{lemma}\label{lem:fast-Tn}
There exists $C<\infty$ such that
\begin{equation}\label{eq:Tn-small-linear}
\PP(\tau_1^+(Y_n)\le t)\le Ct,
\qquad n\ge1,\quad 0<t\le1.
\end{equation}
Equivalently,
\begin{equation}\label{eq:tau-small-linear}
\PP(\tau_n^+\le tn^r)\le Ct,
\qquad n\ge1,\quad 0<t\le1.
\end{equation}
\end{lemma}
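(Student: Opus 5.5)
The plan is to bound the maximum of the walk directly, uniformly in \(n\), by a truncation argument together with Kolmogorov's maximal inequality, in the same spirit as the verification of condition~(1.6) in the proof of Lemma~\ref{Convgsl}. By the identity \(\tau_1^+(Y_n)=\tau_n^+/n^r\) from Proposition~\ref{prop:tau-scaling}, the two formulations \eqref{eq:Tn-small-linear} and \eqref{eq:tau-small-linear} are the same. Fix \(n\ge1\) and \(t\in(0,1]\), and put \(m:=\lfloor tn^r\rfloor\). Then
\[
\{\tau_n^+\le tn^r\}=\Bigl\{\max_{0\le k\le m}S_k\ge n\Bigr\}.
\]
If \(m=0\), this event is empty because \(S_0=0<n\). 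So we may assume \(m\ge1\), which gives \(m\le tn^r\).

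\emph{Finite-variance regime (\(r=2\)).} Kolmogorov's maximal inequality applied to the centred increments gives
\[
\PP\Bigl(\max_{k\le m}|S_k|\ge n\Bigr)\le \frac{m\sigma^2}{n^2}\le \sigma^2 t .
\]
This already proves the claim with \(C=\sigma^2\).

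\emph{Stable regime (\(r=\alpha\)).} Here I truncate at level \(n\). Set \(\zeta_j:=\xi_j\mathbf 1_{\{|\xi_j|\le n\}}\), which has mean zero by symmetry. On the event that no \(|\xi_j|\) with \(j\le m\) exceeds \(n\), the partial sums of the \(\xi_j\) coincide with those of the \(\zeta_j\). A union bound followed by Kolmogorov's inequality then gives
\[
\PP\Bigl(\max_{k\le m}S_k\ge n\Bigr)
\le m\,\PP(|\xi_1|>n)+\frac{m}{n^2}\,\E\bigl[\xi_1^2\mathbf 1_{\{|\xi_1|\le n\}}\bigr].
\]
It therefore suffices to show that both \(n^{\alpha}\PP(|\xi_1|>n)\) and \(n^{\alpha-2}\E[\xi_1^2\mathbf 1_{\{|\xi_1|\le n\}}]\) are bounded uniformly in \(n\ge1\). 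Once that holds, the right-hand side is at most \(C' m n^{-\alpha}\le C't\).

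For large \(n\), the first quantity tends to \(c_\xi\) by \eqref{eq:tail-xi}. The second tends to \(\alpha c_\xi/(2-\alpha)\) by the Karamata computation already carried out in the proof of Lemma~\ref{Convgsl}. For the finitely many remaining values of \(n\), both quantities are finite, since \(\E[\xi_1^2\mathbf 1_{\{|\xi_1|\le n\}}]\le n^2\). Hence both suprema over \(n\ge1\) are finite.

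The only point that needs care is this uniformity in \(n\), including small \(n\) and small \(m\). It is handled by the limit-plus-finitely-many-terms argument above, together with the trivial case \(m=0\). Taking \(C\) to be the maximum of the constants from the two regimes completes the proof. No use of the limit processes or of Lemma~\ref{lem:small-time-limit} is needed.
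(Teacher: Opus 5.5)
Your proof is correct and follows the paper's argument: you reduce to the maximum of \(S_k\) over \(k\le m=\lfloor tn^r\rfloor\), use a second-moment bound in the finite-variance case, and in the stable case split into big jumps (union bound, of order \(m\,\PP(|\xi_1|>\cdot)\)) and truncated small jumps (truncated variance of order \(n^{2-\alpha}\)). The differences are minor. You apply Kolmogorov's maximal inequality directly to the partial-sum maximum, whereas the paper uses the symmetry-based L\'evy inequality with its factor \(2\) and truncates at \(n/2\) rather than \(n\). You also obtain uniformity in \(n\) from the limits together with finitely many remaining \(n\), whereas the paper gives explicit bounds valid for all \(x\ge 1/2\).
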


\begin{proof}
Let $m:=\lfloor tn^r\rfloor$. If $m=0$, then
$\PP(\tau_n^+\le tn^r)=0$, so the claim is immediate. Assume henceforth that
$m\ge1$. By Lévy's maximal inequality for sums of independent symmetric
random variables \cite[Proposition~2.3]{LedouxTalagrand1991},
\[
\PP(\tau_n^+\le tn^r)
\le\PP\left(\max_{1\le k\le m}|S_k|\ge n\right)
\le 2\PP(|S_m|\ge n).
\]

In the finite-variance regime, Chebyshev's inequality yields
\[
\PP(|S_m|\ge n)\le \frac{m\sigma^2}{n^2}\le \sigma^2t.
\]
In the stable regime, the regular-variation assumption implies that there
exist constants \(x_0\ge 1/2\) and \(C_1<\infty\) such that
\[
\PP(|\xi_1|>x)\le C_1x^{-\alpha},
\qquad x\ge x_0.
\]
By increasing \(C_1\), if necessary, the same bound holds for every
\(x\ge 1/2\).

We also claim that there exists \(C_2<\infty\) such that
\[
\E\!\left[\xi_1^2\mathbf 1_{\{|\xi_1|\le x\}}\right]
\le C_2x^{2-\alpha},
\qquad x\ge 1/2.
\]
Indeed, setting \(Z:=|\xi_1|\), the tail-integration inequality gives
\[
\E\!\left[Z^2\mathbf 1_{\{Z\le x\}}\right]
\le
2\int_0^x y\,\PP(Z>y)\,dy.
\]
Using \(\PP(Z>y)\le1\) on \((0,1/2)\) and the preceding tail bound on
\([1/2,x]\), we obtain
\[
\begin{aligned}
\E\!\left[Z^2\mathbf 1_{\{Z\le x\}}\right]
&\le
2\int_0^{1/2}y\,dy
+
2C_1\int_{1/2}^x y^{1-\alpha}\,dy  \\
&\le C_2x^{2-\alpha},
\end{aligned}
\]
where we used \(1<\alpha<2\).

Now define
\[
A_{m,n}:=
\left\{\max_{1\le j\le m}|\xi_j|>\frac n2\right\},
\qquad
\zeta_j:=\xi_j\mathbf 1_{\{|\xi_j|\le n/2\}}.
\]
On \(A_{m,n}^c\), one has \(\zeta_j=\xi_j\) for every \(1\le j\le m\),
and hence
\[
S_m=\sum_{j=1}^m\zeta_j.
\]
Therefore,
\[
\{|S_m|\ge n\}
\subseteq
A_{m,n}
\cup
\left\{
\left|\sum_{j=1}^m\zeta_j\right|\ge n
\right\}.
\]
Since the law of \(\xi_j\) is symmetric and the truncation is symmetric,
\(\E[\zeta_j]=0\). Moreover, the variables \(\zeta_1,\ldots,\zeta_m\)
are independent. Thus, by the union bound and Chebyshev's inequality,
\[
\begin{aligned}
\PP(|S_m|\ge n)
&\le
\PP(A_{m,n})
+
\PP\!\left(
\left|\sum_{j=1}^m\zeta_j\right|\ge n
\right)\\
&\le
m\PP(|\xi_1|>n/2)
+
\frac{1}{n^2}
\Var\!\left(\sum_{j=1}^m\zeta_j\right)\\
&=
m\PP(|\xi_1|>n/2)
+
\frac{m\E[\zeta_1^2]}{n^2}.
\end{aligned}
\]
Applying the two estimates above with \(x=n/2\), we obtain
\[
\PP(|S_m|\ge n)
\le
Cmn^{-\alpha}
+
\frac{Cm n^{2-\alpha}}{n^2}
\le
Cmn^{-\alpha}.
\]
Since \(m=\lfloor tn^\alpha\rfloor\le tn^\alpha\), it follows that
\[
\PP(|S_m|\ge n)\le Ct.
\]

Combining this estimate with the finite-variance case and Lévy's maximal
inequality, and absorbing its factor \(2\) into the constant \(C\), proves
\eqref{eq:tau-small-linear}. Finally,
\[
\tau_1^+(Y_n)=\frac{\tau_n^+}{n^r},
\]
and therefore \eqref{eq:Tn-small-linear} follows immediately.
\end{proof}

We next derive a uniform bound for \(\E(u^{\tau_n^+})\) when \(u\) stays a
fixed distance away from \(1\), using the convention \(u^\infty:=0\) for
\(0\le u<1\).

\begin{lemma}\label{lem:moment-away-from-1}
Let $\eta\in(0,1)$. Then there exist constants $K_\eta<\infty$ and
$N_\eta\in\mathbb{N}$ such that
\begin{equation}\label{eq:moment-away-from-1}
\E\bigl(u^{\tau_n^+}\bigr)\le K_\eta\,n^{-r}
\end{equation}
for all $n\ge N_\eta$ and all $u\in[0,1-\eta]$.
\end{lemma}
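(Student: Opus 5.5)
Since $u\mapsto u^{\tau_n^+}$ is nondecreasing on $[0,1]$, it suffices to bound $\E\bigl((1-\eta)^{\tau_n^+}\bigr)$ uniformly in $n\ge N_\eta$. Thus I would fix $q:=1-\eta\in(0,1)$ and aim to show that $\E(q^{\tau_n^+})=O(n^{-r})$. The plan is to split according to whether $\tau_n^+$ is small or not.

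I choose a threshold $m=m(n)\in\N$ and write
\[
\E\bigl(q^{\tau_n^+}\bigr)\le \PP(\tau_n^+\le m)+q^{m}.
\]
For the first term I reuse the argument in the proof of Lemma~\ref{lem:fast-Tn}. Lévy's maximal inequality reduces it to $2\PP(|S_m|\ge n)$. The truncation and Chebyshev computation there then gives $\PP(|S_m|\ge n)\le C m n^{-r}$ for every $m\ge1$: in the finite-variance case this is $m\sigma^2/n^2$, and in the stable case it is $Cmn^{-\alpha}$. That derivation never used $m\le n^r$; it only used $m$ as the number of summands.

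Next I take $m:=\lceil a\log n\rceil$ with $a:=r/\log(1/q)$, so that $q^m\le n^{-r}$. This yields
\[
\E\bigl(q^{\tau_n^+}\bigr)\le 2Cn^{-r}\lceil a\log n\rceil+n^{-r}.
\]
This bound has an extra $\log n$, so it is not yet the claimed estimate. I expect removing that logarithm to be the main obstacle. To avoid the loss I would sum directly over the exact value of the first-passage time:
\[
\E\bigl(q^{\tau_n^+}\bigr)=\sum_{k\ge1}q^k\,\PP(\tau_n^+=k)\le\sum_{k\ge1}q^k\,\PP(\tau_n^+\le k)\le \sum_{k\ge1}q^k\cdot 2Ckn^{-r}=\frac{2Cq}{(1-q)^2}\,n^{-r}.
\]
Here $\tau_n^+\ge1$ for $n\ge1$, since $S_0=0<n$, and I use the bound $\PP(\tau_n^+\le k)\le 2\PP(|S_k|\ge n)\le 2Ckn^{-r}$, valid for all $k\ge1$ and $n\ge1$.

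This gives $K_\eta=2C(1-\eta)/\eta^2$, and in fact works with $N_\eta=1$. The point requiring care is to verify that the constant in $\PP(|S_k|\ge n)\le Ckn^{-r}$ is uniform in $k$ and $n$. In the stable case this rests on the bounds $\PP(|\xi_1|>x)\le C_1x^{-\alpha}$ and $\E[\xi_1^2\mathbf 1_{\{|\xi_1|\le x\}}]\le C_2x^{2-\alpha}$ for $x\ge1/2$, both established in the proof of Lemma~\ref{lem:fast-Tn}. Applying them with $x=n/2$ gives $k\PP(|\xi_1|>n/2)+k\E[\zeta_1^2]/n^2\le Ckn^{-\alpha}$, with no restriction on $k$. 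Finally, monotonicity in $u$ together with $u^\infty=0$ extends the bound to all $u\in[0,1-\eta]$.
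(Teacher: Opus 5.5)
Your argument is correct and is essentially the paper's proof: both bound $\E\bigl(u^{\tau_n^+}\bigr)\le\sum_{k\ge1}u^k\PP(\tau_n^+\le k)$ and insert the linear small-time estimate coming from the Lévy-maximal-inequality and truncation argument of Lemma~\ref{lem:fast-Tn}. The only difference is that the paper splits the sum at $K_n=\lfloor t_0n^r\rfloor$ and controls the tail by $(1-\eta)^{K_n+1}/\eta$, whereas you note, correctly, that $\PP(\tau_n^+\le k)\le 2Ckn^{-r}$ holds for every $k\ge1$; this removes the split and gives $N_\eta=1$, so your preliminary $\log n$ detour is unnecessary.
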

\begin{proof}
Fix \(\eta\in(0,1)\) and let \(u\in[0,1-\eta]\). Then
\[
  \E\bigl(u^{\tau_n^+}\bigr)
  =\sum_{k=1}^\infty u^k\,\PP(\tau_n^+=k)
  \le \sum_{k=1}^\infty u^k\,\PP(\tau_n^+\le k).
\]
Since
\[
  \tau_1^+(Y_n)=\frac{\tau_n^+}{n^r},
\]
we have
\[
  \PP(\tau_n^+\le k)
  =\PP\!\left(\tau_1^+(Y_n)\le \frac{k}{n^r}\right).
\]
Therefore,
\[
  \E\bigl(u^{\tau_n^+}\bigr)
  \le \sum_{k=1}^\infty u^k\,
      \PP\!\left(\tau_1^+(Y_n)\le \frac{k}{n^r}\right).
\]

Fix any \(t_0\in(0,1]\), and let \(C>0\) be the constant in
Lemma~\ref{lem:fast-Tn}. Set
\[
  K_n:=\lfloor t_0 n^r\rfloor.
\]
Split the upper bound as
\[
  \E\bigl(u^{\tau_n^+}\bigr)\le S_{1,n}+S_{2,n},
\]
where
\[
  S_{1,n}:=\sum_{k=1}^{K_n}u^k\,
  \PP\!\left(\tau_1^+(Y_n)\le \frac{k}{n^r}\right),
\]
and
\[
  S_{2,n}:=\sum_{k>K_n}u^k\,
  \PP\!\left(\tau_1^+(Y_n)\le \frac{k}{n^r}\right).
\]

For \(1\le k\le K_n\), one has \(k/n^r\le t_0\), and hence
Lemma~\ref{lem:fast-Tn} yields
\[
  \PP\!\left(\tau_1^+(Y_n)\le \frac{k}{n^r}\right)
  \le C\,\frac{k}{n^r}.
\]
Therefore,
\[
  S_{1,n}
  \le \frac{C}{n^r}\sum_{k=1}^{K_n}k\,u^k
  \le \frac{C}{n^r}\sum_{k=1}^\infty k\,u^k
  = \frac{C}{n^r}\,\frac{u}{(1-u)^2}.
\]

For the tail term, we simply use
\[
  S_{2,n}
  \le \sum_{k>K_n}u^k
  = \frac{u^{K_n+1}}{1-u}.
\]
Since \(u\in[0,1-\eta]\), we have \(1-u\ge\eta\), and therefore
\[
  S_{2,n}\le \frac{(1-\eta)^{K_n+1}}{\eta}.
\]
Because \(K_n\sim t_0 n^r\), the right-hand side decays
exponentially in \(n^r\). In particular, there exists \(N_\eta\in\N\)
such that
\[
  S_{2,n}\le \frac{1}{n^r}
  \qquad\text{for all }n\ge N_\eta.
\]

Combining the two bounds, for \(n\ge N_\eta\) we obtain
\[
  \E\bigl(u^{\tau_n^+}\bigr)
  \le \frac{C}{n^r}\,\frac{u}{(1-u)^2}
      + \frac{1}{n^r}.
\]
Since \(u\in[0,1-\eta]\), the function
\[
  u\longmapsto C\,\frac{u}{(1-u)^2}+1
\]
is bounded on \([0,1-\eta]\). Hence there exists \(K_\eta<\infty\) such that
\[
  \E\bigl(u^{\tau_n^+}\bigr)\le K_\eta\,n^{-r}
\]
for all \(n\ge N_\eta\) and all \(u\in[0,1-\eta]\), which proves
\eqref{eq:moment-away-from-1}.
\end{proof}

We now turn to negative moments of the limiting first-passage time.

\begin{lemma}
\label{lem:negative-moments-stable}
Let $X$ be a symmetric strictly $\alpha$-stable Lévy process with
$1<\alpha<2$ and Lévy measure \eqref{Levygene}. Then, for every
$0<\theta<1$,
\begin{equation}\label{eq:T-neg-moment}
\E[(\tau_1^+(X))^{-\theta}]<\infty.
\end{equation}
\end{lemma}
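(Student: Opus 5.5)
The plan is to write the negative moment as a tail integral and reduce it to the small-time behaviour of the distribution function $F(t)=\PP(\tau_1^+(X)\le t)$ near zero, which is already controlled by Lemma~\ref{lem:small-time-limit}. Since $\tau_1^+(X)\in(0,\infty)$ almost surely (Lemma~\ref{lem:limit-process-basics} together with Lemma~\ref{lem:tau-open-closed}), the variable $(\tau_1^+(X))^{-\theta}$ is a.s.\ finite and nonnegative. Tonelli's theorem then gives
\[
\E\bigl[(\tau_1^+(X))^{-\theta}\bigr]
=\int_0^\infty \PP\bigl((\tau_1^+(X))^{-\theta}>s\bigr)\,ds
=\int_0^\infty \PP\bigl(\tau_1^+(X)<s^{-1/\theta}\bigr)\,ds .
\]

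We split this integral at $s_1:=t_1^{-\theta}$, where $t_1\in(0,1)$ and $C$ are the constants in \eqref{eq:limit-small-linear}. On $(0,s_1]$ we bound the integrand by $1$, which contributes at most $s_1<\infty$. On $(s_1,\infty)$ we have $s^{-1/\theta}<t_1$, so \eqref{eq:limit-small-linear} yields
\[
\PP\bigl(\tau_1^+(X)<s^{-1/\theta}\bigr)\le C\,s^{-1/\theta}.
\]
The resulting integral $\int_{s_1}^\infty s^{-1/\theta}\,ds$ is finite precisely because $1/\theta>1$, i.e.\ $\theta<1$.

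An equivalent route is to write $\E[T^{-\theta}]=\theta\int_0^\infty t^{-\theta-1}F(t)\,dt$, bound $F(t)\le Ct$ near $0$ (integrable since $\theta<1$), and bound $F\le1$ for large $t$ (integrable since $\theta>0$). I would present whichever version reads more cleanly; the first avoids Stieltjes manipulations.

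There is essentially no obstacle, since all the analytic input is the linear small-time bound of Lemma~\ref{lem:small-time-limit}. That bound rests on the supremum tail asymptotic $\PP(S_X(1)>x)\sim Kx^{-\alpha}$ from \cite{DoneySavov2010}, together with the identity $\PP(\tau_1^+(X)\le t)=\PP(S_X(1)>t^{-1/\alpha})$. The only care needed is to note that this linear rate is sharp, which explains why the range stops at $\theta<1$.
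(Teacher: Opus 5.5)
Your proposal is correct and follows essentially the same route as the paper: the layer-cake identity, a split of the integral at $t_1^{-\theta}$, and the trivial bound $1$ below the split. Above the split you use the linear small-time bound of Lemma~\ref{lem:small-time-limit}, with finiteness coming from $1/\theta>1$; your remark that the sharp asymptotic $\PP(\tau_1^+(X)\le t)\sim Kt$ explains the restriction $\theta<1$ is a correct side observation that the proof does not need.
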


\begin{proof}
Lemmas~\ref{lem:limit-process-basics} and~\ref{lem:tau-open-closed} imply that
\[
\tau_1^+(X)<\infty
\qquad\text{almost surely}.
\]
Moreover, since \(X_0=0\) and the paths of \(X\) are right-continuous at
zero,
\[
\tau_1^+(X)>0
\qquad\text{almost surely}.
\]
Indeed, for almost every sample path there exists \(\delta>0\) such that
\[
|X_s|<1
\qquad\text{for all }0\le s<\delta,
\]
and hence the level \(1\) cannot be reached before time \(\delta\).

By Lemma~\ref{lem:small-time-limit}, there exist constants
\(t_0,C>0\) such that
\[
\PP\bigl(\tau_1^+(X)\le t\bigr)\le Ct,
\qquad 0<t\le t_0.
\]
The layer-cake identity gives
\[
\E\!\left[(\tau_1^+(X))^{-\theta}\right]
=
\int_0^\infty
\PP\!\left((\tau_1^+(X))^{-\theta}>y\right)\,dy.
\]
Splitting the integral at \(t_0^{-\theta}\), we obtain
\[
\begin{aligned}
\E\!\left[(\tau_1^+(X))^{-\theta}\right]
&\le
t_0^{-\theta}
+
\int_{t_0^{-\theta}}^\infty
\PP\!\left(
\tau_1^+(X)<y^{-1/\theta}
\right)\,dy.
\end{aligned}
\]
For \(y>t_0^{-\theta}\), one has \(y^{-1/\theta}<t_0\), and therefore
\[
\PP\!\left(
\tau_1^+(X)<y^{-1/\theta}
\right)
\le
\PP\!\left(
\tau_1^+(X)\le y^{-1/\theta}
\right)
\le
C y^{-1/\theta}.
\]
Consequently,
\[
\begin{aligned}
\E\!\left[(\tau_1^+(X))^{-\theta}\right]
&\le
t_0^{-\theta}
+
C\int_{t_0^{-\theta}}^\infty y^{-1/\theta}\,dy\\
&=
t_0^{-\theta}
+
\frac{C\theta}{1-\theta}\,t_0^{1-\theta}
<\infty,
\end{aligned}
\]
where the integral is finite because \(0<\theta<1\). This proves
\eqref{eq:T-neg-moment}.
\end{proof}

\begin{lemma}
\label{lem:BM-Laplace}
Let \(X=(X_t)_{t\ge0}\) be a Brownian motion started from zero with variance
parameter \(\sigma^2>0\), that is,
\[
\E[X_t]=0,
\qquad
\E[X_t^2]=\sigma^2t,
\qquad t\ge0.
\]
For \(x>0\), set
\[
\tau_x^+(X):=\inf\{t\ge0:X_t\ge x\}.
\]
Then, for every \(q>0\),
\begin{equation}\label{eq:BM-Laplace}
\E\!\left[e^{-q\tau_x^+(X)}\right]
=
\exp\!\left(-\frac{x\sqrt{2q}}{\sigma}\right).
\end{equation}
For related Brownian first-hitting-time formulas, see also \cite[Part~II, Chapter~1, Section~2]{BorodinSalminen2002}.
\end{lemma}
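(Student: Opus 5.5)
Since $X=\sigma B$ with $B$ a standard Brownian motion, and $\tau_x^+(\sigma B)=\inf\{t\ge0:B_t\ge x/\sigma\}=\tau_{x/\sigma}^+(B)$, it suffices to prove the identity for a standard Brownian motion at level $a:=x/\sigma>0$, namely $\E[e^{-q\tau_a^+(B)}]=e^{-a\sqrt{2q}}$. We use the exponential martingale. Set $\lambda:=\sqrt{2q}>0$ and $M_t:=\exp(\lambda B_t-\lambda^2 t/2)=\exp(\lambda B_t-qt)$, a continuous martingale with respect to the Brownian filtration. Write $T:=\tau_a^+(B)$, a stopping time that is almost surely finite by recurrence (Lemma~\ref{lem:limit-process-basics}). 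By optional stopping at the bounded time $T\wedge t$, we get $\E[M_{T\wedge t}]=1$ for every $t\ge0$.

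Next we pass to the limit $t\to\infty$. By continuity of the paths, $B_{T\wedge t}\le a$ for all $t$, so $0\le M_{T\wedge t}\le e^{\lambda a}$, and the family is uniformly bounded. On $\{T<\infty\}$, which has probability one, $M_{T\wedge t}\to M_T=\exp(\lambda a-qT)$, because $B_T=a$ by path continuity. Bounded convergence then gives $\E[e^{\lambda a-qT}]=1$, that is, $\E[e^{-qT}]=e^{-a\sqrt{2q}}$. Substituting $a=x/\sigma$ yields \eqref{eq:BM-Laplace}.

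The only delicate points are that $B_T=a$ exactly, which uses continuity (for the stable case this would fail because of overshoot, but that case is not needed here), and that $T<\infty$ almost surely, which is already supplied by Lemma~\ref{lem:limit-process-basics}. An alternative route is to integrate the reflection-principle density of $T$, $\frac{a}{\sqrt{2\pi t^3}}e^{-a^2/(2t)}$, against $e^{-qt}$. The martingale argument avoids that computation, so I expect no real obstacle.
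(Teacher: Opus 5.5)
Your proposal is correct and follows essentially the same route as the paper: the exponential martingale $\exp(\lambda X_t-qt)$, optional stopping at the bounded time $\tau_x^+(X)\wedge t$, the identity $X_{\tau_x^+(X)}=x$ from path continuity, and almost-sure finiteness of the hitting time from Lemma~\ref{lem:limit-process-basics}. The differences are cosmetic. The paper works directly with $X$ and $\lambda=\sqrt{2q}/\sigma$ instead of rescaling to a standard Brownian motion, and it passes to the limit by splitting on $\{\tau_x^+(X)\le t\}$ (monotone convergence there, plus the bound $e^{\lambda x-qt}\to0$ on the complement) rather than by your single bounded-convergence step.
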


\begin{proof}
Fix \(q>0\) and set
\[
\lambda:=\frac{\sqrt{2q}}{\sigma}.
\]
Then
\[
q=\frac{\lambda^2\sigma^2}{2},
\]
and the process
\[
M_t
:=
\exp\!\left(
\lambda X_t-\frac{\lambda^2\sigma^2}{2}t
\right)
=
\exp(\lambda X_t-qt),
\qquad t\ge0,
\]
is a martingale. We use the standard exponential-martingale property of
Brownian motion and the optional sampling theorem for bounded stopping times;
see \cite[Chapter~1, Section~1.3.C, and Chapter~2]{KaratzasShreve1991}.

By Lemmas~\ref{lem:limit-process-basics} and~\ref{lem:tau-open-closed},
\[
\tau_x^+(X)<\infty
\qquad\text{almost surely}.
\]
For every \(t>0\), the stopping time
\[
\tau_x^+(X)\wedge t
\]
is bounded. Hence the optional sampling theorem gives
\[
1
=
\E[M_0]
=
\E\!\left[M_{\tau_x^+(X)\wedge t}\right].
\]
Since the paths of \(X\) are continuous,
\[
X_{\tau_x^+(X)}=x
\qquad\text{almost surely}.
\]
Therefore,
\[
\begin{aligned}
1
&=
\E\!\left[
e^{\lambda x-q\tau_x^+(X)}
\mathbf 1_{\{\tau_x^+(X)\le t\}}
\right]  \\
&\quad+
\E\!\left[
e^{\lambda X_t-qt}
\mathbf 1_{\{\tau_x^+(X)>t\}}
\right].
\end{aligned}
\]
On the event \(\{\tau_x^+(X)>t\}\), one has \(X_t<x\). Consequently,
\[
0
\le
\E\!\left[
e^{\lambda X_t-qt}
\mathbf 1_{\{\tau_x^+(X)>t\}}
\right]
\le
e^{\lambda x-qt},
\]
and the right-hand side converges to zero as \(t\to\infty\).

On the other hand, since
\[
\mathbf 1_{\{\tau_x^+(X)\le t\}}
\uparrow 1
\qquad\text{almost surely},
\]
the monotone convergence theorem gives
\[
\lim_{t\to\infty}
\E\!\left[
e^{\lambda x-q\tau_x^+(X)}
\mathbf 1_{\{\tau_x^+(X)\le t\}}
\right]
=
e^{\lambda x}
\E\!\left[e^{-q\tau_x^+(X)}\right].
\]
Letting \(t\to\infty\) in the preceding optional-sampling identity yields
\[
1
=
e^{\lambda x}
\E\!\left[e^{-q\tau_x^+(X)}\right].
\]
Thus
\[
\E\!\left[e^{-q\tau_x^+(X)}\right]
=
e^{-\lambda x}
=
\exp\!\left(-\frac{x\sqrt{2q}}{\sigma}\right),
\]
which proves \eqref{eq:BM-Laplace}.
\end{proof}

\begin{corollary}
\label{cor:BM-negative-moments}
Let \(X=(X_t)_{t\ge0}\) be a Brownian motion started from zero with variance
parameter \(\sigma^2>0\), and define
\(\tau_1^+(X):=\inf\{t\ge0:X_t\ge1\}\). Then, for every \(\theta>0\),
\begin{equation}\label{eq:BM-negative-moments}
\E\!\left[(\tau_1^+(X))^{-\theta}\right]
=
\frac{2\,\Gamma(2\theta)}{\Gamma(\theta)}
\left(\frac{\sigma^2}{2}\right)^\theta.
\end{equation}
In particular,
\[
\E\!\left[(\tau_1^+(X))^{-\theta}\right]<\infty
\qquad\text{for every }\theta>0.
\]
\end{corollary}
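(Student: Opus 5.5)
The plan is to combine the Laplace transform of Lemma~\ref{lem:BM-Laplace} with the Gamma-function representation of negative powers. For \(T>0\) and \(\theta>0\) we have the elementary identity
\[
T^{-\theta}=\frac{1}{\Gamma(\theta)}\int_0^\infty q^{\theta-1}e^{-qT}\,dq,
\]
obtained from the substitution \(s=qT\) in the definition of \(\Gamma(\theta)\). Apply it with \(T=\tau_1^+(X)\). By Lemmas~\ref{lem:limit-process-basics} and~\ref{lem:tau-open-closed}, this variable lies in \((0,\infty)\) almost surely: finiteness comes from recurrence, and positivity from continuity of paths at zero together with \(X_0=0<1\). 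Hence the identity holds almost surely.

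Next, take expectations and exchange expectation and integral. Tonelli's theorem justifies this, since the integrand \(q^{\theta-1}e^{-qT}\) is nonnegative. This gives
\[
\E\bigl[(\tau_1^+(X))^{-\theta}\bigr]=\frac{1}{\Gamma(\theta)}\int_0^\infty q^{\theta-1}\,\E\bigl[e^{-q\tau_1^+(X)}\bigr]\,dq
=\frac{1}{\Gamma(\theta)}\int_0^\infty q^{\theta-1}\exp\!\Bigl(-\frac{\sqrt{2q}}{\sigma}\Bigr)dq,
\]
where the second equality uses Lemma~\ref{lem:BM-Laplace} with \(x=1\).

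It remains to evaluate the last integral. Substitute \(s=\sqrt{2q}/\sigma\), so that \(q=\sigma^2s^2/2\) and \(dq=\sigma^2 s\,ds\). Then
\[
q^{\theta-1}\,dq=\Bigl(\frac{\sigma^2}{2}\Bigr)^{\theta-1}s^{2\theta-2}\,\sigma^2 s\,ds
=2\Bigl(\frac{\sigma^2}{2}\Bigr)^{\theta}s^{2\theta-1}\,ds,
\]
and therefore the integral equals
\[
2\Bigl(\frac{\sigma^2}{2}\Bigr)^{\theta}\int_0^\infty s^{2\theta-1}e^{-s}\,ds=2\,\Gamma(2\theta)\Bigl(\frac{\sigma^2}{2}\Bigr)^{\theta}.
\]
Dividing by \(\Gamma(\theta)\) gives \eqref{eq:BM-negative-moments}. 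This value is finite for every \(\theta>0\), which proves the final assertion.

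No step here is a real obstacle. The only points needing care are the justification for interchanging expectation and integral (Tonelli applies because everything is nonnegative) and the almost-sure bounds \(0<\tau_1^+(X)<\infty\), so that the Gamma identity is valid pathwise. As a consistency check, the formula agrees with the direct computation from the Lévy density of \(\tau_1^+(\sigma B)\), which is an inverse-Gamma law with shape \(1/2\) and scale \(1/(2\sigma^2)\).
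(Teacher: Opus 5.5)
Your proof is correct and follows the paper's argument essentially step for step. You use the Gamma-integral representation of $T^{-\theta}$, Tonelli's theorem, and the Laplace transform from Lemma~\ref{lem:BM-Laplace}, and then a substitution; your single change of variables $s=\sqrt{2q}/\sigma$ replaces the paper's $y=\sqrt q$ followed by rescaling with $\lambda_0=\sqrt2/\sigma$. Your inverse-Gamma consistency check is also correct, via the duplication formula $\Gamma(2\theta)=2^{2\theta-1}\pi^{-1/2}\Gamma(\theta)\Gamma(\theta+\frac{1}{2})$.
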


\begin{proof}
Since \(X_0=0<1\) and the paths of \(X\) are continuous at zero,
\[
\tau_1^+(X)>0
\qquad\text{almost surely}.
\]
By Lemma~\ref{lem:BM-Laplace}, for every \(q>0\),
\[
\E\!\left[e^{-q\tau_1^+(X)}\right]
=
e^{-\lambda_0\sqrt q},
\qquad
\lambda_0:=\frac{\sqrt2}{\sigma}.
\]

For every \(\theta>0\) and every \(z>0\), the Gamma-integral identity gives
\[
z^{-\theta}
=
\frac{1}{\Gamma(\theta)}
\int_0^\infty q^{\theta-1}e^{-qz}\,dq.
\]
Applying this identity with \(z=\tau_1^+(X)\), and then using Tonelli's
theorem, which applies because the integrand is nonnegative, we obtain
\[
\begin{aligned}
\E\!\left[(\tau_1^+(X))^{-\theta}\right]
&=
\frac{1}{\Gamma(\theta)}
\E\!\left[
\int_0^\infty
q^{\theta-1}e^{-q\tau_1^+(X)}\,dq
\right] \\
&=
\frac{1}{\Gamma(\theta)}
\int_0^\infty
q^{\theta-1}
\E\!\left[e^{-q\tau_1^+(X)}\right]\,dq \\
&=
\frac{1}{\Gamma(\theta)}
\int_0^\infty
q^{\theta-1}e^{-\lambda_0\sqrt q}\,dq.
\end{aligned}
\]
Making the change of variables
\[
y=\sqrt q,
\qquad
q=y^2,
\qquad
dq=2y\,dy,
\]
gives
\[
\begin{aligned}
\int_0^\infty
q^{\theta-1}e^{-\lambda_0\sqrt q}\,dq
&=
2\int_0^\infty
y^{2\theta-1}e^{-\lambda_0y}\,dy \\
&=
2\lambda_0^{-2\theta}\Gamma(2\theta).
\end{aligned}
\]
Consequently,
\[
\E\!\left[(\tau_1^+(X))^{-\theta}\right]
=
\frac{2\,\Gamma(2\theta)}{\Gamma(\theta)}
\lambda_0^{-2\theta}.
\]
Finally, since
\[
\lambda_0=\frac{\sqrt2}{\sigma},
\]
we have
\[
\lambda_0^{-2\theta}
=
\left(\frac{\sigma^2}{2}\right)^\theta.
\]
Substitution proves \eqref{eq:BM-negative-moments}.
\end{proof}

\begin{acks}
G.O.C. was supported by  CNPq (grant no. 408590/2024-6). F.P.M. and J.H.R.G. are
supported by FAPESP (grants 2023/13453-5 and 2025/03804-0, respectively).
\end{acks}

\end{document}